\documentclass[12pt]{amsart}

\usepackage[T1]{fontenc}
\usepackage{lmodern}
\usepackage{microtype}
\usepackage{amsmath,mathtools}

\usepackage[english]{babel}

\usepackage{vmargin,graphicx,amssymb,color,subfigure,enumerate,csquotes, dsfont, mathrsfs}

\numberwithin{equation}{section}

\usepackage{color}
\definecolor{gr}{rgb}   {0.,   0.69,   0.23 }
\definecolor{bl}{rgb}   {0.,   0.5,   1. }
\definecolor{mg}{rgb}   {0.85,  0.,    0.85}
\definecolor{or}{rgb}   {0.9,  0.5,   0.}

\definecolor{webred}{rgb}{0.75,0,0}
\definecolor{webgreen}{rgb}{0,0.75,0}
\usepackage[citecolor=webgreen,colorlinks=true,linkcolor=webred]{hyperref}

\newtheorem{theorem}{Theorem}[section]
\newtheorem{lemma}[theorem]{Lemma}

\newtheorem{proposition}[theorem]{Proposition}
\newtheorem{corollary}[theorem]{Corollary}
\theoremstyle{definition}
\newtheorem{definition}[theorem]{Definition\rm}
\newtheorem{assumption}[theorem]{Assumption}
\theoremstyle{remark}
\newtheorem{remark}[theorem]{Remark}


\newcommand{\dist}{\mathsf{dist}}
\newcommand{\Tr}{\mathsf{Tr}}
\renewcommand{\Re}{\mathsf{Re}}
\newcommand{\loc}{\mathsf{loc}}
\newcommand{\e}{\mathsf{e}}
\newcommand{\x}{\mathbf{x}}
\newcommand{\y}{\mathbf{y}}
\newcommand{\n}{\mathbf{n}}
\newcommand{\Dom}{\mathsf{Dom}}
\newcommand{\supp}{\mathsf{supp}\,}

\newcommand{\h}{\hbar}

\newcommand{\Z}{\mathbb{Z}}
\newcommand{\R}{\mathbb{R}}
\newcommand{\C}{\mathbb{C}}
\newcommand{\A}{\mathbf{A}}
\newcommand{\kb}{\mathbf{k}}
\newcommand{\cb}{\mathbf{c}}
\newcommand{\zb}{\mathbf{0}}
\newcommand{\B}{\mathbf{B}}
\newcommand{\Dir}{\mathbf{Dir}}
\newcommand{\Neu}{\mathbf{Neu}}
\newcommand{\sH}{\mathsf{H}}
\newcommand{\sL}{\mathsf{L}}
\newcommand{\Id}{\mathsf{Id}}
\newcommand{\eps}{\varepsilon}
\newcommand{\dx}{\,\mathrm{d}}


\begin{document}

\title[]{Semiclassical Sobolev constants for the electro-magnetic Robin Laplacian}

\author{S. Fournais}
\address[S. Fournais]{Aarhus University, Ny Munkegade 118, DK-8000 Aarhus~C, Denmark}
\email{fournais@math.au.dk}
\author{L. Le Treust}
\address[L. Le Treust]{IRMAR, Universit\'e de Rennes 1, Campus de Beaulieu, F-35042 Rennes cedex, France}
\email{loic.letreust@univ-rennes1.fr}
\author{N. Raymond}
\address[N. Raymond]{IRMAR, Universit\'e de Rennes 1, Campus de Beaulieu, F-35042 Rennes cedex, France}
\email{nicolas.raymond@univ-rennes1.fr}
\author{J. Van Schaftingen}
\address[J. Van Schaftingen]{Universit\'e catholique de Louvain,
Institut de Recherche en Mathématique et Physique,
Chemin du Cyclotron 2 bte L7.01.01, B-1348 Louvain-la-Neuve, Belgium}
\email{jean.vanschaftingen@uclouvain.be }
\date{\today}

\maketitle
\begin{abstract}
This paper is devoted to the asymptotic analysis of the optimal Sobolev constants in the semiclassical limit and in any dimension. We combine semiclassical arguments and concentration-compactness estimates to tackle the case when an electro-magnetic field is added as well as a smooth boundary carrying a Robin condition. As a byproduct of the semiclassical strategy, we also get exponentially weighted localization estimates of the minimizers.
\end{abstract}
\tableofcontents
\section{Introduction}

\subsection{Description of the problem}
The aim of this work is to investigate optimal Sobolev constants under an electro-magnetic field and in a domain with a smooth boundary. We want especially to investigate the behavior of these constants in the semiclassical limit.

\subsubsection{Geometric context}
Before describing our main results, we describe the geometric context of this paper.

For $d\geq 2$, we consider an open, bounded and simply connected set $\Omega\subset\R^d$ with smooth boundary. We also introduce the smooth electro-magnetic potential $(V,\A)\in\mathcal{C}^\infty(\overline{\Omega},\R\times\R^d)$ and the variable Robin coefficient $\gamma\in\mathcal{C}^\infty(\partial\Omega,\R)$. We let $\mathcal{G}=(\Omega,\Id,  V,\A,\gamma)$ where $\Id$ stands for the standard Euclidean metric. 

\begin{definition}
For notational convenience, we will constantly consider \emph{quintuples gathering the Robin electro-magnetic geometry} $\mathsf{G}=(U,\mathsf{R}, \mathsf{V},\mathsf{A},\mathsf{c})$ where 
\begin{enumerate}[i.]
\item $U$ is a smooth open set,
\item $\mathsf{R}$ is a Riemannian metric on $\overline{U}$,
\item the electric potential $\mathsf{V}$ belongs to $\mathcal{C}^\infty(\overline{U},\R)$,
\item the magnetic vector potential $\mathsf{A}$ belongs to $\mathcal{C}^\infty(\overline{U},\R^d)$,
\item the Robin coefficient $\mathsf{c}$ belongs to $\mathcal{C}^\infty(\partial U,\R)$. 
\end{enumerate}
If $\Phi : U'\to U$ is a  local chart near the boundary, then we introduce the pull-back geometry
\[\Phi^*\mathsf{G}=(U', (d\Phi)^\mathsf{T}\mathsf{R}(d\Phi), \mathsf{V}\circ\Phi, \left(d\Phi\right)^{\mathsf{T}}\circ\mathsf{A}\circ \Phi,\gamma\circ\Phi)\,.\]
\end{definition}
We recall that the \enquote{magnetic field} is the $2$-form defined as the exterior derivative
\[\mathsf{B}=\dx\mathsf{A}=\dx \left(\sum_{j=1}^d \mathsf{A}_{j}\dx x_{j}\right)\,,\]
where $\mathsf{A}$ is identified with a $1$-form thanks to the Euclidean duality. The $2$-form $\mathsf{B}$ may be identified with the skew-symmetric matrix, called \enquote{magnetic matrix}, $(\mathsf{B}_{k\ell})_{1\leq k,\ell\leq d}$ where $\mathsf{B}_{k\ell}=\partial_{k}\mathsf{A}_{\ell}-\partial_{\ell}\mathsf{A}_{k}$. 
It is well known that the non-zero eigenvalues of the matrix $\mathsf{B}$ are in the form $(\pm i\beta_{k})_{1\leq k\leq \lfloor\frac{d}{2}\rfloor}$, $\beta_{k} > 0$ and that $0$ is always an eigenvalue in odd dimension. This allows to define
\[\Tr^+\,\mathsf{B}=\sum_{k=1}^{\lfloor\frac{d}{2}\rfloor}\beta_{k}\,.\]
In particular, if $\Tr^+\,\mathsf{B}=0$, then $\mathsf{B}=0$.

\begin{definition}
We will say that the geometry $\mathsf{G}$ is \emph{homogeneous} when $(\mathsf{V}, \mathsf{B}, \mathsf{c})$ is constant and when $U$ is the whole space or a half-space, equipped with the Euclidean metric $\mathsf{R}=\Id$. We will also say that a geometry is Euclidean when $\mathsf{R}=\mathsf{Id}$. In this case, we will also use the notation $\underline{\mathsf{G}}=(\R^d,\Id, \mathsf{V},\mathsf{A}, 0)$, where $\mathsf{A}$ is a linear potential associated with $\mathsf{B}$.
\end{definition}

\subsubsection{Minimization problem}

We now introduce the minimization problem under consideration in this paper.

Let $p\in\left[2,2^*\right)$, with $2^*=\frac{2d}{d-2}$. We are mainly interested in the following \enquote{optimal Sobolev constant}, in the case of a Euclidean geometry \(\mathcal{G}\),
\begin{equation}\label{eq.Sobo}
\lambda(\mathsf{G}, h, p)=\inf_{\underset{\psi\neq 0}{\psi\in \sH_{\A}^1(U),}}\frac{\mathfrak{Q}_{\mathsf{G},h}(\psi)}{\|\psi\|^2_{\sL^p(U)}}\,,
\end{equation}
where the magnetic Sobolev space is defined by 
\[
  \sH^1_{\A}(U) = \{\psi \in \sL^2(U) : (-ih\nabla + \A)u\in \sL^2(U)\}
\]
and for all $\psi\in\sH^1_{\A}(U)$, the quadratic form \(\mathfrak{Q}_{\mathsf{G},h}\) is defined by
\begin{equation}\label{eq.fq}
  \mathfrak{Q}_{\mathsf{G},h}(\psi)=\int_{U}|(-ih\nabla+\mathsf{A})\psi|^2+h\mathsf{V}|\psi|^2\dx\x+h^{\frac{3}{2}}\int_{\partial U} \mathsf{c} |\psi|^2\dx \sigma(\x)\,.
\end{equation}
Here, $\dx\sigma$ is the surface measure on the boundary $\partial U$. 

\subsubsection{Homogeneity}
Let us heuristically explain where the different powers of $h$ come from. Let us introduce the temporary semiclassical parameter $\h$. We consider the initial quadratic form:
\begin{equation}\label{eq.initial}
\int_{\Omega}|(-i\h^a\nabla+\h^b\A)\psi|^2+\h^c V|\psi|^2\dx\x+\h^{d}\int_{\partial \Omega} \gamma |\psi|^2\dx \sigma(\x)\,.
\end{equation}
After a semiclassical local zoom, we would like to get an homogeneous quadratic form. It is sufficient to derive these appropriate powers \enquote{locally}, that is in the case of a homogeneous geometry (and $\Omega$ being replaced for instance by the half-space $U$):
\begin{equation}\label{eq.initial'}
\int_{U}|(-i\h^a\nabla+\h^b\mathsf{A})\psi|^2+\h^c \mathsf{V}|\psi|^2\dx\x+\h^{d}\int_{\partial U} \mathsf{c} |\psi|^2\dx \sigma(\x)\,.
\end{equation}
Let us determine the $a, b, c, d$ that lead to non-trivial situations.

First, we may always reduce the investigation to $a=1$ by multiplying the quadratic form by an appropriate power of $\hbar$. Then, we would like that, up to a semiclassical zoom, all the different quantities play on the same scale (if not, this would mean that an effect could be neglected). Thus, we let $\x=\h^{\eta}\y$ with $\eta\neq 0$ and consider the rescaled quadratic form
\[
\int_{U}|(-i\h^{1-\eta}\nabla+\h^{b+\eta}\mathsf{A})\psi|^2+\h^{c}\mathsf{V}|\psi|^2\dx\y+\h^{d-\eta}\int_{\partial U} \mathsf{c} |\psi|^2\dx \sigma(\y)\,.
\]
In order to balance all the electro-magnetic effects, we choose 
\(c = 2 - 2 \eta =2 b + 2 \eta = d-\eta\). We get
\[b=c-1\,,\quad d=1+\frac{c}{2}\,,\quad \eta=1-\frac{c}{2}\,.\]
Note that $\eta\neq 0$ means that $c\neq 2$ and that $c=2$ corresponds then to a homogeneous problem (which is not semiclassical!). 

Therefore, coming back to \eqref{eq.initial}, this leads to
\[
\int_{\Omega}|(-i\h\nabla+\h^{c-1}\A)\psi|^2+\h^{c}V|\psi|^2\dx\x+\h^{1+\frac{c}{2}}\int_{\partial \Omega} \gamma |\psi|^2\dx \sigma(\x)\,.
\]
Now, if $c-1>1$, this quadratic form is locally a perturbation of the one of $-\h^2\Delta$ (with Neumann condition) and thus the Robin-electro-magnetic geometry can be forgotten. Thus, if we are interested in geometric effects, we only have to consider $c<2$. In this case, we write
\[
\h^{2-2c}\left\{\int_{\Omega}|(-i\h^{2-c}\nabla+\A)\psi|^2+\h^{2-c}V|\psi|^2\dx\x+\h^{\frac{3}{2}(2-c)}\int_{\partial \Omega} \gamma |\psi|^2\dx \sigma(\x)\right\}\,,
\]
and we can consider $h=\h^{2-c}$ as new semiclassical parameter. We get the powers appearing in \eqref{eq.fq}.

\subsubsection{Basic properties}
We can already make some elementary observations that we will constantly use.

We first recall the diamagnetic inequality (see for example \cite[Theorem 7.21]{LiebLoss2001}, \cite[Theorem 2.1.1]{FouHel10}):
\[\forall \psi\in\sH^1_{\A}(U)\,,\qquad\|\nabla|\psi|\|^2_{\sL^2(U)}\leq \|(-i\nabla+\A)\psi\|^2_{\sL^2(U)}\,.\]
This inequality implies that $|\psi|\in \sH^1(U)$ and we get, thanks to the classical trace theorem, that its trace is well-defined as an element of $\sH^{\frac{1}{2}}(\partial U)$; thus $\mathfrak{Q}_{\mathsf{G},h}$ is well-defined on $\sH^1_{\A}(U)$. 
Another important property of the magnetic Laplacian is the gauge invariance (see for example \cite[\S 7.21]{LiebLoss2001}): 
\begin{equation}\label{eq.gauge}
\forall\varphi\in\mathcal{C}^\infty(\overline{U}),\qquad\mathfrak{Q}_{\mathsf{G},h}(e^{i\varphi/h}\psi)=\mathfrak{Q}_{\mathsf{G}^{\varphi},h}(\psi),\quad \text{ with }\quad  \mathsf{G}^\varphi=(U,\mathsf{R},\mathsf{V},\mathsf{A}+\nabla\varphi,\mathsf{c})\,.
\end{equation}
Let us already notice that it is not clear whether the infimum \eqref{eq.Sobo} actually exists when $U$ is unbounded. Nevertheless, if $\mathsf{V}$ and $\mathsf{c}$ are non-negative, its existence is obvious. In any case, when this infimum exists and is a minimum, the corresponding minimizers satisfy, in the sense of distributions, the following nonlinear focusing equation
\begin{equation}\left\{
\begin{aligned}
(-ih\nabla+\mathsf{A})^2\psi+h\mathsf{V}\psi&=\lambda(\mathsf{G},h,p)|\psi|^{p-2}\psi\,,\\ 
(-ih\nabla+\mathsf{A})\psi\cdot\n&=-ih^{\frac{1}{2}}\mathsf{c}\psi,\text{ on }\partial U\,,
\end{aligned}\right.
\end{equation}
where we assumed that $\|\psi\|_{\sL^p(U)}=1$ and where $\n$ is the \emph{inward} unit normal to the boundary. By multiplying $\psi$ by an appropriate constant, we therefore have a  solution (for $p>2$) of the following stationary Schr\"odinger nonlinear equation
\begin{equation}\label{eq.SNLS}\left\{
\begin{aligned}
(-ih\nabla+\mathsf{A})^2\Psi+h\mathsf{V}\Psi&=|\Psi|^{p-2}\Psi\,,\\ 
(-ih\nabla+\mathsf{A})\Psi\cdot\n&=-ih^{\frac{1}{2}}\mathsf{c}\Psi,\text{ on }\partial U\,.
\end{aligned}\right.
\end{equation}
As a byproduct of our investigation, we will get the existence of non-trivial solutions of \eqref{eq.SNLS} (solitons) that are localized (in the semiclassical limit) near the minima of a \emph{concentration function} describing the local nonlinear electro-magnetic Robin geometry.

\subsubsection{Mathematical context and motivations}
The aim of this paper is to estimate the optimal Sobolev constant $\lambda(\mathcal{G},h,p)$ under generic assumptions on the geometry. 

In the linear case, \emph{i.e.}\  when $p=2$, this problem has now a long history, especially in two and three dimensions in the case of Neumann boundary conditions and $V=0$. The investigation of the lowest eigenvalue of the semiclassical magnetic Laplacian can be motivated by the theory of superconductivity and the study of the third critical field in the Ginzburg-Landau theory. The reader may consult the book by Fournais and Helffer \cite{FouHel10} or the one by Raymond \cite{Ray14} for an introduction to these topics. In this linear and purely magnetic framework, it appears that the microlocalization of the eigenfunctions is strongly related to the asymptotics of the lowest eigenvalue. This fact was noticed, for instance, in the papers by Helffer and Morame \cite{HelMo01, HelMo04} where numerous techniques have been developed to analyze the magnetic Laplacian and its eigenfunctions. Even more recently in \cite{HelKo15, RVN15, HKRVN14}, in cases without boundary, subtle localization properties of the magnetic eigenfunctions have played a fundamental role in the semiclassical spectral theory (and we will meet again this aspect in the nonlinear context). In cases with boundaries, the Robin condition is  physically motivated by inhomogeneous superconductors (see for instance the linear and nonlinear contributions by Kachmar \cite{K06a, K06b, K08, K15}): in this context, the Robin condition is sometimes called \enquote{de Gennes condition}. In the linear framework many recent contributions have also been made to investigate the semiclassical curvature effects with Robin condition (see for instance \cite{EMP, PP-eh, KKR16} and also \cite{HelKacRay15} in the case with symmetries).

In the nonlinear case $p>2$, the theory does not seem as developed as in the linear case, especially when a magnetic field and a boundary are added. In the seminal paper \cite{EL89} and in the concentration-compactness spirit, it is proved that $\lambda(\mathsf{G}, 1, p)$ is attained when $\mathsf{G}=(\R^d,\mathsf{Id}, 0,\mathsf{A},0)$, when $\mathsf{B}$ is constant and non-zero and when $p$ is subcritical. In \cite{dCvS15}, the authors have analyzed the semiclassical situation and obtained, up to subsequence extraction of the semiclassical parameter, the one term asymptotics of $\lambda(\mathcal{G}, h, p)$ with the geometry $\mathcal{G}=(\Omega, \mathsf{Id}, V, \A, +\infty)$, when $\Omega$ bounded and $\Tr^+\,\B+V$ does not vanish. The idea in \cite{dCvS15} was to use a semiclassical blow up argument near each point $\x\in\Omega$ and compare with nonlinear models with constant electro-magnetic field $(V_{\x}, \B_{\x})$. In particular the minimizers are essentially localized near the minima of the \emph{concentration function} $\Omega\ni\x\mapsto\lambda((\R^d,\mathsf{Id}, V_{\x}, \A_{\x},0), 1, p)$, where $\A_{\x}$ is a linear potential associated with the constant field $\B_{\x}$ (see also \cite{BNvS} where some properties of the concentration function are discussed). As we mentioned above, the localization properties of the magnetic eigenfunctions are strongly connected to the eigenvalue asymptotics and this phenomenon is expected to be even stronger in the nonlinear framework. 

The present paper aims at extending the theory developed in \cite{FR15} (in two dimensions without boundary) by investigating the effect of a smooth boundary carrying a Robin condition, in any dimension. For that purpose, we will decouple the semiclassical linear methods (described in \cite[Part I]{FouHel10}) and the concentration-compactness arguments. By doing so we will derive a quantitative remainder in the semiclassical asymptotics of $\lambda(\mathcal{G}, h, p)$ as well as quantitative localization estimates of the minimizers.

\subsection{Assumptions and main results}
We can now state our main assumptions and results. Let us first explain in which framework our problem is well-posed.
\begin{lemma}
The quadratic form $\mathfrak{Q}_{\mathcal{G},h}$ is bounded from below and defines a self-adjoint operator $\mathfrak{L}_{\mathcal{G},h}$ with compact resolvent whose domain is
\begin{multline*}
\Dom\left(\mathfrak{L}_{\mathcal{G},h}\right)=\big\{\psi\in\sH^1(\Omega) : ((-ih\nabla+\A)^2+hV)\psi\in\sL^2(\Omega)\\
\text{ and } (-ih\nabla+\A)\psi\cdot\n(\x)=-ih^{\frac{1}{2}}\gamma(\x)\psi(\x),\quad\x\in\partial\Omega\big\}\,.
\end{multline*}
In particular, $\lambda(\mathcal{G}, h, 2)$ coincides with its lowest eigenvalue.
\end{lemma}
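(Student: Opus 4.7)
The plan is to verify the hypotheses of the Kato--Lax--Milgram representation theorem for semibounded quadratic forms, and then to identify the operator domain via integration by parts together with elliptic regularity.

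First I would check that the form domain is $\sH^1(\Omega)$. Since $\Omega$ is bounded and $\A$ is smooth on $\overline\Omega$, the magnetic Sobolev space $\sH^1_{\A}(\Omega)$ coincides with the usual $\sH^1(\Omega)$, with equivalent norms. The main step is semiboundedness. The electric term $h\int V|\psi|^2$ is bounded since $V\in\mathcal{C}^\infty(\overline\Omega)$. For the Robin term, which may be negative, I would use the classical trace inequality: for every $\eta>0$ there exists $C_\eta>0$ such that
\[
\|u\|^2_{\sL^2(\partial\Omega)}\leq \eta\|\nabla u\|^2_{\sL^2(\Omega)}+C_\eta\|u\|^2_{\sL^2(\Omega)},\qquad u\in\sH^1(\Omega),
\]
applied to $u=|\psi|$. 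Combined with the diamagnetic inequality recalled in the excerpt, this yields
\[
h^{3/2}\int_{\partial\Omega}\gamma|\psi|^2\dx\sigma\geq -\eta h^{3/2}\|\gamma\|_\infty\|(-ih\nabla+\A)\psi\|_{\sL^2}^2 h^{-2}-C(\eta,h)\|\psi\|_{\sL^2}^2,
\]
and choosing $\eta$ small absorbs the gradient term into the magnetic part of $\mathfrak{Q}_{\mathcal{G},h}$. Thus $\mathfrak{Q}_{\mathcal{G},h}$ is bounded from below on $\sH^1(\Omega)$, and closedness with respect to the form norm is automatic from the continuity of the trace and the lower-order perturbations. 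Kato's representation theorem then produces a self-adjoint operator $\mathfrak{L}_{\mathcal{G},h}$.

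To identify the domain, I would take $\psi\in\sH^1(\Omega)$ in the operator domain, which by the representation theorem means there exists $f\in\sL^2(\Omega)$ with $\mathfrak{Q}_{\mathcal{G},h}(\psi,\varphi)=\langle f,\varphi\rangle$ for all $\varphi\in\sH^1(\Omega)$. Testing first with $\varphi\in\mathcal{C}^\infty_c(\Omega)$ gives $((-ih\nabla+\A)^2+hV)\psi=f$ in $\mathcal{D}'(\Omega)$, so in particular $((-ih\nabla+\A)^2+hV)\psi\in\sL^2(\Omega)$. Then allowing $\varphi\in\sH^1(\Omega)$ with non-vanishing trace and integrating by parts (first formally, and then justified because $(-ih\nabla+\A)\psi\in\sL^2(\Omega)$ with divergence in $\sL^2(\Omega)$, so its normal trace exists in $\sH^{-1/2}(\partial\Omega)$) yields the Robin boundary condition
\[
(-ih\nabla+\A)\psi\cdot\n=-ih^{1/2}\gamma\,\psi\quad\text{on }\partial\Omega
\]
in the distributional sense on the boundary. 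Conversely, any $\psi$ satisfying the two conditions belongs to the operator domain by reversing the computation.

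Finally, compact resolvent follows from the Rellich--Kondrachov theorem: the form domain $\sH^1(\Omega)$ embeds compactly into $\sL^2(\Omega)$ because $\Omega$ is bounded with smooth boundary, and semiboundedness then gives compactness of $(\mathfrak{L}_{\mathcal{G},h}+C)^{-1}$. Consequently the spectrum is purely discrete, and by the min--max principle the infimum $\lambda(\mathcal{G},h,2)$ in \eqref{eq.Sobo} with $p=2$ is exactly the lowest eigenvalue. I expect the main obstacle to be the rigorous justification of the integration by parts identifying the Robin boundary trace, since a priori $\psi$ is only in $\sH^1(\Omega)$ and not in $\sH^2(\Omega)$; this is handled via the Green formula for vector fields whose divergence lies in $\sL^2$, which assigns a meaning to the normal component as an element of $\sH^{-1/2}(\partial\Omega)$, this element then being identified with the smooth-coefficient multiple $-ih^{1/2}\gamma\psi\in\sH^{1/2}(\partial\Omega)$ of the trace of $\psi$.
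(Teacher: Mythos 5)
Your proof is correct and follows the standard route (KLMN/Kato representation theorem, trace inequality plus diamagnetic inequality for semiboundedness, Rellich--Kondrachov for compactness of the resolvent); the paper states this lemma without proof, relying on exactly this argument. The only point where your treatment diverges from the paper's is the meaning of the boundary condition: you interpret the normal trace of $(-ih\nabla+\A)\psi$ in $\sH^{-1/2}(\partial\Omega)$ via the Green formula for vector fields with $\sL^2$ divergence, whereas the remark following the lemma in the paper instead invokes elliptic regularity to conclude $\psi\in\sH^2(\Omega)$, so that the Robin condition holds as a classical trace identity --- both are valid, and the latter is what makes the stated domain description literal.
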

\begin{remark}
We recall that $\Omega$ is bounded and that $V$ and $\A$ are smooth on $\overline{\Omega}$. Therefore, if $\psi\in\sH^1(\Omega)$ and $((-ih\nabla+\A)^2+hV)\psi\in\sL^2(\Omega)$, then $\psi\in\sH^2(\Omega)$ so that the Robin boundary condition is well-defined by a classical trace theorem.
\end{remark}
We will provide a sufficient condition, for the geometry $\mathcal{G}$, that ensures that the $\sL^2$ norm is controlled by $\mathfrak{Q}_{\mathcal{G}, h}$ in the semiclassical limit $h\to 0$. This condition will be related to models with homogeneous geometry. Let us recall that, for all $\x_{0}\in\overline{\Omega}$, the vector potential, defined in a convex neighborhood of $\x_{0}$,
\begin{equation}\label{eq.Lorentz}
\langle\A^{\mathsf{L}}_{\x_{0}}(\x),\cdot\rangle_{\R^d}=\int_{0}^1t \B_{\x_{0}+t(\x-\x_{0})}(\x-\x_{0},\cdot)\dx t
\end{equation}
satisfies, in this neighborhood, 
\begin{equation}\label{eq.jaugex0}
\A^{\mathsf{L}}_{\x_{0}}(\x_{0})=0\,\qquad\text{ and }\qquad\dx \A^\mathsf{L}_{\x_{0}}=\B\,. 
 \end{equation}
 We introduce its linear approximation
\begin{equation}\label{eq.linear-approx}
\mathcal{A}^\mathsf{L}_{\x_{0}}(\x)=\frac{1}{2}\B(\x_{0})(\x-\x_{0})\,.
\end{equation}
We will meet the following homogeneous Euclidean geometries:
\begin{enumerate}[i.]
\item if $\x_{0}\in\Omega$, we consider $\mathcal{G}_{\x_{0}}=(\x_{0}+\R^d, \mathsf{Id}, V(\x_{0}), \mathcal{A}^\mathsf{L}_{\x_{0}}, 0)$,
\item if $\x_{0}\in\partial\Omega$, we consider $\mathcal{G}_{\x_{0}}=(\x_{0}+\mathsf{T}_{\x_{0}}(\partial\Omega)+\R_{+}\n(\x_{0}),\mathsf{Id}, V(\x_{0}), \mathcal{A}^\mathsf{L}_{\x_{0}}, \gamma(\x_{0}))$, where $\mathsf{T}_{\x_{0}}(\partial\Omega)$ is the linear tangent space of $\partial\Omega$ at $\x_{0}$.
\end{enumerate}

Let us now state our main assumption which is of spectral nature: we assume that the $2$-eigenvalue is not degenerate.
\begin{assumption}\label{a.0}
We assume that 
\begin{enumerate}[i.]
\item\label{a.0i} $\Omega\ni\x\mapsto \lambda(\mathcal{G}_{\x},1,2)=\Tr^+\,\B(\x)+V(\x)$ does not vanish,
\item\label{a.0ii} $\partial\Omega\ni\x\mapsto\lambda(\mathcal{G}_{\x},1,2)$ is bounded from below by a positive constant.
\end{enumerate}
\end{assumption}
We will provide sufficient conditions under which Assumption~\ref{a.0} is satisfied in Section~\ref{sec.suff}. Before presenting our main result, let us state a proposition that ensures that the infimum \eqref{eq.Sobo} is actually well-defined and a minimum.
\begin{proposition}\label{prop.minorationL2}
There exist $h_{0}, C>0$ such that, for all $h\in(0,h_{0})$, we have
\[\lambda(\mathcal{G}, h, 2)\geq h\inf_{\x\in\overline{\Omega}}\lambda(\mathcal{G}_{\x},1,2)-Ch^{\frac{5}{4}}> 0\,,\]
and, under Assumption~\ref{a.0}, the infimum \eqref{eq.Sobo} for $\mathsf{G}=\mathcal{G}$ is a minimum.
\end{proposition}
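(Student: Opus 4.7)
The strategy is a standard semiclassical lower bound obtained by comparing $\mathfrak{Q}_{\mathcal{G},h}$ locally with the quadratic forms of the homogeneous model geometries $\mathcal{G}_{\x_0}$. I would introduce a partition of unity at semiclassical scale $h^{\eta}$ and pick $\eta=\tfrac{3}{8}$ so that all the error contributions balance to $O(h^{5/4})$.

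More precisely, cover $\overline{\Omega}$ by balls $B(\x_0, h^{3/8})$ centered at a discrete set of points $\x_0\in\overline{\Omega}$ (with bounded overlap) and choose a smooth quadratic partition $\sum_{\x_0}\chi_{\x_0}^2=1$ with $|\nabla\chi_{\x_0}|\lesssim h^{-3/8}$. The IMS formula
\[
\mathfrak{Q}_{\mathcal{G},h}(\psi)=\sum_{\x_0}\mathfrak{Q}_{\mathcal{G},h}(\chi_{\x_0}\psi)-h^2\sum_{\x_0}\bigl\||\nabla\chi_{\x_0}|\psi\bigr\|_{\sL^2}^2
\]
yields a remainder of size $h^{2-2\cdot 3/8}=h^{5/4}$, which matches the target. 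It thus suffices to bound each localized energy $\mathfrak{Q}_{\mathcal{G},h}(\chi_{\x_0}\psi)$ from below by $h\,\lambda(\mathcal{G}_{\x_0},1,2)\|\chi_{\x_0}\psi\|^2_{\sL^2}-O(h^{5/4})\|\chi_{\x_0}\psi\|^2_{\sL^2}$.

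For the local step I would perform a gauge transformation via $\e^{i\varphi_{\x_0}/h}$ with $\varphi_{\x_0}$ chosen so that $\A$ becomes the Poincaré-gauge potential $\A^\mathsf{L}_{\x_0}$ (formulas \eqref{eq.Lorentz}--\eqref{eq.jaugex0}), and then replace $\A^\mathsf{L}_{\x_0}$ by its linearization $\mathcal{A}^\mathsf{L}_{\x_0}$ from \eqref{eq.linear-approx}. The pointwise error is $|\A^\mathsf{L}_{\x_0}-\mathcal{A}^\mathsf{L}_{\x_0}|=O(|\x-\x_0|^2)=O(h^{3/4})$ on the support of $\chi_{\x_0}$; an $\eps$-Cauchy--Schwarz inequality with $\eps=h^{1/4}$ absorbs the cross term into $\eps\,\mathfrak{Q}_{\mathcal{G}_{\x_0},h}(\chi_{\x_0}\psi)\lesssim h^{5/4}\|\chi_{\x_0}\psi\|^2$ plus a direct error $\eps^{-1}h^{3/2}\|\chi_{\x_0}\psi\|^2=h^{5/4}\|\chi_{\x_0}\psi\|^2$. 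Taylor expansion of $V$ and $\gamma$ contributes respectively $h\cdot h^{3/8}$ and $h^{3/2}\cdot h^{3/8}$, both controlled by $h^{5/4}$. For $\x_0\in\partial\Omega$ one uses a local chart $\Phi$ straightening $\partial\Omega$ onto the tangent half-space; the pull-back metric differs from $\mathsf{Id}$ by $O(h^{3/4})$ on the support, giving an error again of order $h^{5/4}$ in the quadratic form (and similarly for the boundary measure). After these replacements the localized energy becomes $\mathfrak{Q}_{\mathcal{G}_{\x_0},h}(\chi_{\x_0}\psi)$ on the model half-space (or full space), and the scaling $\x=h^{1/2}\y$ together with the definition of $\lambda(\mathcal{G}_{\x_0},1,2)$ yields the desired bound
\[
\mathfrak{Q}_{\mathcal{G}_{\x_0},h}(\chi_{\x_0}\psi)\geq h\,\lambda(\mathcal{G}_{\x_0},1,2)\,\|\chi_{\x_0}\psi\|_{\sL^2}^2.
\]
Summing in $\x_0$ and using $\sum\chi_{\x_0}^2=1$ delivers the announced lower bound. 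The main technical obstacle is keeping track, uniformly in $\x_0\in\overline{\Omega}$, of the simultaneous errors from the partition of unity, the linearization of $\A$, the Taylor expansions of $V$ and $\gamma$, and the boundary straightening, so that they all fit under $h^{5/4}$; the choice $\eta=3/8$ is precisely what equalizes them.

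For the existence of a minimizer under Assumption~\ref{a.0}, the lower bound just proved combined with \eqref{a.0i}--\eqref{a.0ii} gives $\mathfrak{Q}_{\mathcal{G},h}(\psi)\geq ch\,\|\psi\|_{\sL^2}^2$ for some $c>0$ and all small $h$, which, together with the boundedness of $V$ and a standard trace estimate to control the boundary term, implies that any minimizing sequence of \eqref{eq.Sobo} with $\|\psi\|_{\sL^p(\Omega)}=1$ is bounded in $\sH^1_{\A}(\Omega)$. The diamagnetic inequality yields boundedness of $|\psi_n|$ in $\sH^1(\Omega)$, hence strong convergence (up to extraction) in $\sL^p(\Omega)$ by Rellich--Kondrachov since $p<2^*$ and $\Omega$ is bounded; lower semicontinuity of $\mathfrak{Q}_{\mathcal{G},h}$ with respect to weak $\sH^1_{\A}$ convergence then guarantees that the weak limit is a minimizer.
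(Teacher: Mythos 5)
Your proposal follows essentially the same route as the paper: a quadratic partition of unity at scale $h^{3/8}$ (the paper's sliding partition with $\alpha=\rho=\tfrac38$, the translation trick being unnecessary when $p=2$), the IMS formula, local gauge change and linearization of $\A$ absorbed by an $\eps$-Cauchy--Schwarz with $\eps=h^{1/4}$, Taylor expansion of $V$, $\gamma$ and the boundary metric, comparison with the homogeneous models, and the direct method for existence. Two cosmetic points: the Taylor error in $\gamma$ is a \emph{boundary} integral and must be converted to bulk quantities via the trace estimate of Lemma~\ref{lem.trace-eps} (as the paper does), and the metric discrepancy in the boundary chart is $O(h^{3/8})$ rather than $O(h^{3/4})$ --- neither affects the final $O(h^{5/4})$ remainder.
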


We can transform Assumption~\ref{a.0} (related to the positivity of the spectrum) into a semi-continuity property of the $p$-eigenvalue which will play a crucial role in our investigation. This semi-continuity will be derived from a concentration-compactness analysis and used when estimating the Sobolev constants from above.
\begin{proposition}\label{prop.semicont}
Under Assumption~\ref{a.0}, the function $\x\mapsto \lambda(\mathcal{G}_{\x}, 1, p)$ is lower semi-continuous on $\overline{\Omega}$ for $p\in\left(2, 2^*\right)$.
\end{proposition}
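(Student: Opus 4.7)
The plan is to apply a concentration-compactness argument to near-minimizers. Take a sequence $\x_n \to \x_0$ in $\overline{\Omega}$ achieving $\ell := \liminf_n \lambda(\mathcal{G}_{\x_n}, 1, p)$, and select $\psi_n$ in the appropriate magnetic Sobolev space of $\mathcal{G}_{\x_n}$ with $\|\psi_n\|_{\sL^p} = 1$ and $\mathfrak{Q}_{\mathcal{G}_{\x_n}, 1}(\psi_n) \leq \lambda(\mathcal{G}_{\x_n}, 1, p) + 1/n$. Assumption~\ref{a.0} provides a uniform positive lower bound for $\lambda(\mathcal{G}_{\x}, 1, 2)$ over $\overline{\Omega}$, so that $\mathfrak{Q}_{\mathcal{G}_{\x_n}, 1}$ is coercive on $\sL^2$ uniformly in $n$, yielding $\|\psi_n\|_{\sL^2} \leq C$ and $\|(-i\nabla + \mathcal{A}^\mathsf{L}_{\x_n})\psi_n\|_{\sL^2} \leq C$.

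Three cases must be distinguished. When $\x_n \in \Omega$ and $\x_0 \in \partial\Omega$, I would first establish the elementary inequality $\lambda(\mathcal{G}_{\x_0}, 1, p) \leq \lambda(\mathsf{G}^{\mathrm{wh}}_{\x_0}, 1, p)$, where $\mathsf{G}^{\mathrm{wh}}_{\x_0}$ denotes the whole-space model carrying the same data $(V(\x_0), \B(\x_0))$: any test function supported strictly inside the half-space of $\mathcal{G}_{\x_0}$ has vanishing Robin contribution, so by translating a near-optimizer for $\mathsf{G}^{\mathrm{wh}}_{\x_0}$ deep into the interior one obtains the bound. Combined with continuity of the whole-space infimum in $(V, \B)$ (the pure interior case), this disposes of the mixed case. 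For the pure cases (all $\x_n$ in $\Omega$, or all $\x_n$ and $\x_0$ on $\partial\Omega$), I would apply Lions's concentration-compactness trichotomy to $(|\psi_n|^p\dx\x)$: the diamagnetic inequality and Lions's lemma rule out vanishing, dichotomy is absorbed by iterating the profile decomposition, and the compactness branch, after a translation $\y_n$ and a gauge change \eqref{eq.gauge} recentering $\mathcal{A}^\mathsf{L}_{\x_n}$ at $\x_n + \y_n$, extracts a nontrivial weak limit $\psi_\infty$ in the magnetic Sobolev space of the limit geometry.

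Passing to the limit exploits $V(\x_n)\to V(\x_0)$, $\B(\x_n)\to\B(\x_0)$, $\gamma(\x_n)\to\gamma(\x_0)$, $\n(\x_n)\to\n(\x_0)$ (the last by smoothness of $\partial\Omega$), weak lower semicontinuity of $\mathfrak{Q}_{\mathcal{G}_{\x_0}, 1}$ in the limit, and a Br\'ezis--Lieb decomposition of the $\sL^p$ norm: writing $\alpha = \|\psi_\infty\|_{\sL^p}^p \in (0, 1]$, the concavity of $t\mapsto t^{2/p}$ (since $p>2$) gives $\alpha^{2/p} + (1-\alpha)^{2/p} \geq 1$, which ensures that the residual piece $\psi_n - \psi_\infty$ does not degrade the bound and yields $\lambda(\mathcal{G}_{\x_0}, 1, p) \leq \ell$. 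The main obstacle is the tracking of the normal component of $\y_n$ in the boundary case: if it escapes into the half-space, the limit is a whole-space model rather than a half-space one, and the trivial inequality used for the mixed case must be reinvoked. A secondary technicality is to align the varying normals $\n(\x_n)$ with $\n(\x_0)$ by a rotation, so that all half-space models live in a common reference frame and the weak limit is unambiguous.
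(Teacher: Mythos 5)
Your proposal is correct and follows essentially the same route as the paper: the paper proves continuity of $\x\mapsto\lambda(\mathcal{G}_{\x},1,p)$ separately on $\Omega$ and on $\partial\Omega$ by a concentration-compactness analysis of near-minimizers (excluding vanishing, excluding dichotomy via the strict concavity of $t\mapsto t^{2/p}$, and passing to the weak limit in the compact case), and then obtains lower semi-continuity on $\overline{\Omega}$ from the inequality $\lambda(\mathcal{G}_{\x},1,p)\leq\lambda(\underline{\mathcal{G}_{\x}},1,p)$, exactly as you handle the mixed interior-to-boundary case and the escape of the normal component of the recentering translations. The only points to tighten are minor: the contradiction in the dichotomy step needs the strict inequality $\alpha^{2/p}+(1-\alpha)^{2/p}>1$ for $\alpha\in(0,1)$, and the ``boundary vanishing'' scenario is not ruled out but rather yields the desired bound directly through the comparison with the whole-space constant, which is how you in fact treat it.
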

Our main theorem is the following accurate estimate of the optimal Sobolev constant with electro-magnetic field and Robin condition on the boundary, in the semiclassical limit.
\begin{theorem}\label{theo.1}
Let $p\in\left(2, 2^*\right)$. Under Assumption~\ref{a.0}, there exist $h_{0}>0, C>0$ such that, for all $h\in(0,h_{0})$,
\[  h^{\frac{d}{2}-\frac{d}{p}}h(1-Ch^{\frac{1}{6}})\inf_{\x\in\overline{\Omega}}\lambda(\mathcal{G}_{\x},1,p)\leq \lambda(\mathcal{G}, h, p)\leq  h^{\frac{d}{2}-\frac{d}{p}}h(1+Ch^{\frac{1}{2}}|\log h|)\inf_{\x\in\overline{\Omega}}\lambda(\mathcal{G}_{\x},1,p)\,. \]
In the case when there exists $\x_{0}\in\partial\Omega$ such that
  \[
    \inf_{\x\in\overline{\Omega}}\lambda(\mathcal{G}_{\x},1,p) = \lambda(\mathcal{G}_{\x_0},1,p)<\lambda(\underline{\mathcal{G}}_{\x_0},1,p),
  \]
the logarithm appearing in the upper bound can be removed.
\end{theorem}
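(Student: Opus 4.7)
The plan is to prove the two-sided bound separately: semiclassical localization for the lower bound, and a trial-state construction at a concentration point for the upper bound.

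\textbf{Lower bound.} I introduce a scale $\rho = \rho(h)$, a finite covering of $\overline{\Omega}$ by balls $B(\x_j,\rho)$ of bounded overlap, and a subordinate quadratic partition of unity $\sum_j \chi_j^2 = 1$. The IMS formula gives
\[
\mathfrak{Q}_{\mathcal{G},h}(\psi) = \sum_j\mathfrak{Q}_{\mathcal{G},h}(\chi_j\psi) - h^2\sum_j\|(\nabla\chi_j)\psi\|^2_{\sL^2(\Omega)}\,,
\]
and the localization error is absorbed thanks to the $\sL^2$ control of Proposition \ref{prop.minorationL2}. On each piece I straighten the boundary locally (using the pull-back geometry of the Definition), gauge-transform $\A$ into the Lorentz potential \eqref{eq.Lorentz}, and Taylor-expand $V$, $\gamma$, and the metric so that the local geometry agrees with $\mathcal{G}_{\x_j}$ up to relative error $\mathcal{O}(\rho)$; here the key is that after the Lorentz gauge, $\A^{\mathsf{L}}_{\x_j}-\mathcal{A}^{\mathsf{L}}_{\x_j}=\mathcal{O}(|\x-\x_j|^2)$ since both potentials generate magnetic fields differing only at first order in $\x-\x_j$. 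The homogeneity $\lambda(\mathcal{G}_{\x_j}, h, p) = h^{1+d/2-d/p}\lambda(\mathcal{G}_{\x_j}, 1, p)$ then yields
\[
\mathfrak{Q}_{\mathcal{G},h}(\chi_j\psi) \geq (1-C\rho)\,h^{1+d/2-d/p}\,\lambda(\mathcal{G}_{\x_j},1,p)\,\|\chi_j\psi\|^2_{\sL^p(\Omega)}\,.
\]
Gluing is performed via the identity $\chi_j^2|\psi|^p = (\chi_j|\psi|)^2|\psi|^{p-2}$ combined with Hölder's inequality with conjugate exponents $(p/2, p/(p-2))$, which after summation and $\sum_j\chi_j^2=1$ gives $\sum_j\|\chi_j\psi\|^2_{\sL^p(\Omega)} \geq \|\psi\|^2_{\sL^p(\Omega)}$. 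Optimizing $\rho$ against the localization cost $h\rho^{-2}$, the geometric cost $\rho$, and the subleading $h^{1/4}$ contribution inherited from Proposition \ref{prop.minorationL2} produces the announced $h^{1/6}$ remainder.

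\textbf{Upper bound.} Proposition \ref{prop.semicont} provides $\x_0\in\overline{\Omega}$ realizing the infimum of the concentration function. Given a near-minimizer $\varphi$ of $\lambda(\mathcal{G}_{\x_0},1,p)$, the trial state is
\[
\psi_h(\x) = h^{-d/4}\chi\bigl(|\x-\x_0|/R_h\bigr)\,e^{i\varphi_0(\x)/h}\,\varphi\bigl(\Phi_{\x_0}(\x)/h^{1/2}\bigr)\,,
\]
where $\Phi_{\x_0}$ is a boundary straightening chart (identity in the interior case), $\varphi_0$ is a gauge function such that $\A-\nabla\varphi_0 = \A^{\mathsf{L}}_{\x_0}$, and $\chi$ is a smooth cutoff at scale $R_h$. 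Rescaling $\y=(\x-\x_0)/h^{1/2}$, the Rayleigh quotient $\mathfrak{Q}_{\mathcal{G},h}(\psi_h)/\|\psi_h\|^2_{\sL^p}$ returns $h^{1+d/2-d/p}\lambda(\mathcal{G}_{\x_0},1,p)$ plus corrections stemming from the Taylor remainder $\A^{\mathsf{L}}_{\x_0}-\mathcal{A}^{\mathsf{L}}_{\x_0}=\mathcal{O}(|\x-\x_0|^2)$, the variation of $V$, $\gamma$, and the metric, the curvature of the chart, and the truncation. Since a generic near-minimizer only enjoys polynomial decay (existence of a genuine minimizer is not a priori guaranteed), the tightest choice $R_h\sim h^{1/2}|\log h|^{1/2}$ produces the $h^{1/2}|\log h|$ remainder.

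\textbf{Removing the logarithm, and main obstacle.} Under the strict inequality $\lambda(\mathcal{G}_{\x_0},1,p)<\lambda(\underline{\mathcal{G}}_{\x_0},1,p)$, the Lions concentration-compactness principle applied on the half-space rules out vanishing (by Assumption \ref{a.0}) and escape to infinity (by comparison with the full-space value), so $\lambda(\mathcal{G}_{\x_0},1,p)$ is attained by a true minimizer. This minimizer solves \eqref{eq.SNLS} with a linear magnetic potential, and Agmon-type estimates on $|\varphi|$ (via the diamagnetic inequality and the spectral gap above $\Tr^+\B(\x_0)+\mathsf{V}(\x_0)$) yield exponential decay, so the cutoff can be taken at a fixed scale $R_h\sim 1$ and the logarithm disappears. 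The most delicate step in the whole argument is the upper bound in the absence of a minimizer: the $h^{1/2}|\log h|$ rate arises from a careful bookkeeping of how the truncation of polynomially decaying near-minimizers affects both numerator and denominator of the Rayleigh quotient, and relies quantitatively on the lower semicontinuity from Proposition \ref{prop.semicont} to transfer a near-infimum taken on a minimizing sequence of points into an actual near-infimum at a fixed $\x_0$.
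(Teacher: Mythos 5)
Your lower bound follows the paper's semiclassical localization scheme but replaces its key technical device. Because $\sum_j\chi_j^2=1$ and $p>2$ force $\sum_j\int|\chi_j\psi|^p\dx\x\le\int|\psi|^p\dx\x$ (the wrong direction for gluing), the paper introduces a two-scale partition with \emph{sliding centers} (Lemmas~\ref{lemma-partition} and~\ref{lem.translation}) and an averaging argument over translations to recover the total $\sL^p$-norm from the local ones up to a factor $1+Ch^{\alpha-\rho}$. Your H\"older gluing, namely $\int\chi_j^2|\psi|^p\dx\x\le\|\chi_j\psi\|_{\sL^p}^2\,\|\psi\|_{\sL^p}^{p-2}$ summed over $j$, yields $\sum_j\|\chi_j\psi\|^2_{\sL^p}\ge\|\psi\|^2_{\sL^p}$ exactly and makes the sliding unnecessary; this is correct and genuinely simpler. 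One bookkeeping point: balancing the localization cost $h\rho^{-2}$ against a geometric cost $\rho$ would give $h^{1/3}$, not $h^{1/6}$. The $h^{1/6}$ actually originates in the magnetic Taylor remainder: its pointwise size $O(\rho^2)$ converts into a \emph{relative} quadratic-form error $\eps+\eps^{-1}\rho^4h^{-1}\sim\rho^2h^{-1/2}$, which at $\rho=h^{1/3}$ equals $h^{1/6}$ and dominates all other terms. Your budget omits this term, although your final exponent happens to be the right one.

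The genuine gap is in the upper bound for the degenerate boundary case $\lambda(\mathcal{G}_{\x_0},1,p)=\lambda(\underline{\mathcal{G}}_{\x_0},1,p)$. A near-minimizer of the half-space problem carries no decay information whatsoever: it need not decay polynomially (it can, for instance, be compactly supported in a ball of uncontrolled radius depending on the accuracy $\delta$), and even granting some fixed polynomial decay, truncating at rescaled radius $R_hh^{-1/2}\sim|\log h|^{1/2}$ leaves an $\sL^p$-tail that is only polynomially small in $|\log h|$ — far worse than the claimed $h^{\frac{1}{2}}|\log h|$. So your construction does not produce the stated rate. The paper's mechanism here is different: since the half-space and whole-space constants coincide, it uses the \emph{whole-space} minimizer $\Psi_0$ of $\lambda(\underline{\mathcal{G}}_{\x_0},1,p)$ — which exists by Theorem~\ref{theo.0}~\eqref{theo0i} and decays exponentially by Proposition~\ref{prop.exp} — translated a distance $2R_h$ into the domain with $R_h=h^{\frac{1}{2}}|\log h|$ and cut off at scale $R_h$; the boundary then only sees an exponentially small tail $e^{-c|\log h|}=h^{c}$, and the logarithm in the final error comes solely from Taylor-expanding the metric and the potentials over the ball of radius $R_h$. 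You need this (or an equivalent) idea. The remainder of your upper bound — the interior case and the boundary case with strict inequality, where the half-space minimizer exists and decays exponentially so a fixed cutoff suffices — is in line with Propositions~\ref{prop:interior} and~\ref{prop:upperbound}.
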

  
By Proposition~\ref{prop.semicont}, we may consider the set $\mathcal{M}\subset\overline{\Omega}$ of the minimizers of the concentration function $\x\mapsto\lambda(\mathcal{G}_{\x}, 1, p)$. In relation with the estimate of Theorem~\ref{theo.1}, we can deduce the following (exponential) decay estimate of the minimizers away from $\mathcal{M}$.
\begin{theorem}\label{theo.AgmonNLS}
Let $p\in\left(2, 2^*\right)$. Under Assumption~\ref{a.0}, for all $\eps>0$ we define 
\begin{equation}\label{eq.Me}
\mathcal{M}_{\eps}=\mathcal{M}+D(0,\eps)\,.
\end{equation}
Then, for all $\eps>0$ and $\rho\in (0,\frac{1}{2} )$, there exist $h_{0}>0, C>0$ such that, for all $h\in(0,h_{0})$ and all $\sL^p$-normalized minimizers $\psi_{h}$,
\[\|\psi_{h}\|_{\sL^p(\complement \mathcal{M}_{\eps})}\leq Ce^{-\eps h^{-\rho}}\,.\]
\end{theorem}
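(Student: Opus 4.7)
The plan is an Agmon-type exponentially weighted argument tailored to the nonlinear Euler--Lagrange equation satisfied by $\psi_h$, coupled with a refined semiclassical lower bound valid away from $\mathcal{M}$. First, by Proposition~\ref{prop.semicont} and the compactness of $\overline{\Omega}$, there exists $\delta_0>0$ such that $\lambda(\mathcal{G}_{\x},1,p)\geq(1+\delta_0)\inf_{\overline{\Omega}}\lambda(\mathcal{G}_{\cdot},1,p)$ for every $\x\in\complement\mathcal{M}_{\eps/4}$. Revisiting the proof of the lower bound in Theorem~\ref{theo.1} with the underlying covering of $\overline{\Omega}$ restricted to $\complement\mathcal{M}_{\eps/4}$ then yields a strengthened bound
\[\mathfrak{Q}_{\mathcal{G},h}(u)\geq h^{1+\frac{d}{2}-\frac{d}{p}}(1+\delta_0)(1-Ch^{\frac{1}{6}})\inf_{\overline{\Omega}}\lambda(\mathcal{G}_{\cdot},1,p)\,\|u\|_{\sL^p(\Omega)}^2\]
for every $u\in\sH_{\A}^1(\Omega)$ with $\supp u\subset\complement\mathcal{M}_{\eps/4}$. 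Combining Proposition~\ref{prop.minorationL2} with $\mathfrak{Q}_{\mathcal{G},h}(\psi_h)=\lambda(\mathcal{G},h,p)$ also provides the a priori $L^2$-bound $\|\psi_h\|_{\sL^2(\Omega)}^2\leq Ch^{\frac{d}{2}-\frac{d}{p}}$.

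Next, pick a smooth cutoff $\chi$ with $\chi\equiv 0$ on $\mathcal{M}_{\eps/4}$ and $\chi\equiv 1$ on $\complement\mathcal{M}_{\eps}$, and set $\Phi_h=\eps h^{-\rho}\chi$, so that $|\nabla\Phi_h|\leq Ch^{-\rho}$. Multiplying the Euler--Lagrange equation by $e^{2\Phi_h}\bar\psi_h$ and integrating by parts (the Robin boundary condition is exactly compatible with the boundary term in $\mathfrak{Q}_{\mathcal{G},h}$) produces the Agmon-type identity
\[\mathfrak{Q}_{\mathcal{G},h}(e^{\Phi_h}\psi_h)-h^2\int_{\Omega}|\nabla\Phi_h|^2 e^{2\Phi_h}|\psi_h|^2\dx\x=\lambda(\mathcal{G},h,p)\int_{\Omega}e^{2\Phi_h}|\psi_h|^p\dx\x.\]
Writing $\tilde\psi=e^{\Phi_h}\psi_h$ and applying the IMS localization formula with cutoffs $\chi_1^2+\chi_2^2=1$ satisfying $\supp\chi_1\subset\mathcal{M}_{\eps/4}$ and $\supp\chi_2\subset\complement\mathcal{M}_{\eps/8}$, the variational characterization of $\lambda(\mathcal{G},h,p)$ applied to $\chi_1\tilde\psi$ and the strengthened bound applied to $\chi_2\tilde\psi$ give
\[\mathfrak{Q}_{\mathcal{G},h}(\tilde\psi)\geq\lambda(\mathcal{G},h,p)\bigl[\|\chi_1\tilde\psi\|_{\sL^p}^2+(1+\delta_0-Ch^{\frac{1}{6}})\|\chi_2\tilde\psi\|_{\sL^p}^2\bigr]-Ch^2\|\tilde\psi\|_{\sL^2}^2.\]

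Inserting this into the Agmon identity and rewriting the nonlinear term as $\int e^{2\Phi_h}|\psi_h|^p=\int e^{(2-p)\Phi_h}|\tilde\psi|^p\leq\|\tilde\psi\|_{\sL^p}^p$ (which holds since $\Phi_h\geq 0$ and $p>2$), the excess $\delta_0\,\lambda(\mathcal{G},h,p)\|\chi_2\tilde\psi\|_{\sL^p}^2$ on the left must absorb the nonlinear defect $\lambda(\mathcal{G},h,p)(\|\tilde\psi\|_{\sL^p}^p-\|\tilde\psi\|_{\sL^p}^2)$ on the right. To close this, one first establishes a soft preliminary concentration statement $\int_{\complement\mathcal{M}_{\eps/4}}|\psi_h|^p\to 0$ (a by-product of Theorem~\ref{theo.1} and the concentration-compactness argument behind Proposition~\ref{prop.semicont}), then truncates the weight at level $k$ and bootstraps upward in $k$ to propagate boundedness of $\|\tilde\psi\|_{\sL^p}$. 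The remainder $h^2|\nabla\Phi_h|^2\|\psi_h\|_{\sL^2}^2$ is of order $h^{2-2\rho+\frac{d}{2}-\frac{d}{p}}$, which is strictly smaller than $\lambda(\mathcal{G},h,p)\sim h^{1+\frac{d}{2}-\frac{d}{p}}$ precisely when $\rho<\tfrac{1}{2}$. One thereby obtains $\|\tilde\psi\|_{\sL^p(\Omega)}\leq C$, and since $\Phi_h=\eps h^{-\rho}$ on $\complement\mathcal{M}_\eps$, the announced estimate $\|\psi_h\|_{\sL^p(\complement\mathcal{M}_\eps)}\leq Ce^{-\eps h^{-\rho}}$ follows.

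The principal obstacle is the mismatch between the $\sL^p$-normalization and the $e^{2\Phi_h}$-weight produced by testing the equation with $e^{2\Phi_h}\bar\psi_h$: the inequality $e^{(2-p)\Phi_h}\leq 1$ only partially reconciles them, and closing the argument requires simultaneously the refined $(1+\delta_0)$ gain and the preliminary soft concentration of $\psi_h$ near $\mathcal{M}$. A secondary technical point is checking that the $h^{1/6}$ remainder in Theorem~\ref{theo.1} genuinely persists when the covering is restricted to $\complement\mathcal{M}_{\eps/4}$, and that the integration by parts and the IMS formula go through cleanly in the presence of both the magnetic gauge and the Robin boundary.
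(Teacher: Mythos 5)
Your overall architecture is the right one and matches the paper's: a preliminary (non-exponential) concentration estimate away from $\mathcal{M}$, followed by an Agmon-type weighted argument with an IMS partition $\chi_1^2+\chi_2^2=1$ separating $\mathcal{M}$ from its complement. The first paragraph (the $\delta_0$-gap from lower semi-continuity plus the restricted lower bound) is essentially the content of the paper's Proposition~\ref{prop.Lp-loc}, and the Agmon identity and the treatment of the $\chi_1$ piece are fine. However, the way you propose to close the weighted estimate contains two genuine gaps.

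First, the remainder produced by the weight is $h^2\int_\Omega|\nabla\Phi_h|^2e^{2\Phi_h}|\psi_h|^2\dx\x$, and $\nabla\Phi_h$ is supported in the transition region $\mathcal{M}_\eps\setminus\mathcal{M}_{\eps/4}$, where $\Phi_h$ ranges up to $\eps h^{-\rho}$; you cannot drop the factor $e^{2\Phi_h}$ there, so the claimed bound $O\bigl(h^{2-2\rho+\frac{d}{2}-\frac{d}{p}}\bigr)$ is incorrect. The term must be kept as $Ch^{2-2\rho}\|\chi_2 e^{\Phi_h}\psi_h\|_{\sL^2}^2$ and absorbed by a coercive term on the left. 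Your left-hand side only provides the $\sL^p$-level gain $\delta_0\,\lambda(\mathcal{G},h,p)\|\chi_2 e^{\Phi_h}\psi_h\|_{\sL^p}^2\sim \delta_0 h^{1+\frac{d}{2}-\frac{d}{p}}\|\chi_2 e^{\Phi_h}\psi_h\|_{\sL^p}^2$; converting $\sL^2$ to $\sL^p$ on the bounded domain, absorption requires $h^{2-2\rho}\lesssim h^{1+\frac{d}{2}-\frac{d}{p}}$, i.e.\ $\rho\le\frac12-\frac{d(p-2)}{4p}<\frac12$, so your scheme cannot reach the full range $\rho\in(0,\frac12)$ of the statement. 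The paper's proof supplies the missing coercivity from the \emph{linear} problem: using the rough localization and H\"older, the nonlinear potential $-\lambda(\mathcal{G},h,p)|\psi_h|^{p-2}$ is a form-small perturbation on functions supported in $\complement\mathcal{M}_\eps$, so $\mathfrak{Q}^{\mathsf{NL}}_{\mathcal{G},h}(\chi_2 e^{\Phi_h}\psi_h)\ge ch\|\chi_2 e^{\Phi_h}\psi_h\|_{\sL^2}^2$ by Proposition~\ref{prop.minorationL2}, and $ch$ beats $h^{2-2\rho}$ for every $\rho<\frac12$.

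Second, your route keeps the nonlinear term as $\lambda\|\widetilde\psi\|_{\sL^p}^p$ on the right, which is superquadratic in the unknown $\|\widetilde\psi\|_{\sL^p}$, so the inequality is vacuous for large norms; the "truncate and bootstrap in $k$" step that is supposed to resolve this is precisely the crux and is not carried out (a truncation gives finiteness at each level but not a uniform bound, since the superquadratic right-hand side dominates the quadratic left-hand side whenever the truncated norm is large). The paper avoids the issue entirely: by H\"older and Proposition~\ref{prop.Lp-loc}, $\lambda\int|\psi_h|^{p-2}|v|^2\dx\x\le Ch^{\frac{p-2}{6p}}\mathfrak{Q}_{\mathcal{G},h}(v)$ for $v$ supported away from $\mathcal{M}_\eps$, which turns the weighted problem into a perturbed \emph{linear} Agmon estimate at the $\sL^2$ level; the $\sL^p$ bound is recovered only at the very end, at the cost of a harmless power of $h$. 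To repair your argument you would need to import these two ingredients (form-smallness of the nonlinear potential away from $\mathcal{M}$, and the $\sL^2$ spectral gap $\lambda(\mathcal{G},h,2)\ge ch$), at which point the $(1+\delta_0)$ $\sL^p$-gap is no longer needed in the weighted step.
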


\subsection{Further results}
Let us now describe two applications or extensions of our results and methods.
\subsubsection{Large smooth domains}
Let us consider a smooth domain $\Omega\subset\R^d$ and, for $R\geq 1$, the dilated domain $\Omega_{R}=R\,\Omega$. In this section, we consider $V=1$, $\A=0$, and $\gamma=0$. For $p\in[2,2^*)$, we introduce the classical Sobolev constant
\[
  \lambda^\Neu(\Omega_{R}, p)
    =\lambda ((\Omega_{R}, \mathsf{Id}, 1, 0, 0), 1, p)
    =\inf_{\underset{\psi\neq 0}{\psi\in \sH^1(\Omega_{R}),}}\frac{\int_{\Omega_{R}}|\nabla\psi|^2+|\psi|^2\dx\x}{\|\psi\|^2_{\sL^p(\Omega_{R})}}\,.
\]
We have the semiclassical reformulation
\[\lambda^\Neu(\Omega_{R}, p)=R^{4-\frac{4}{p}}\inf_{\underset{\psi\neq 0}{\psi\in \sH^1(\Omega),}}\frac{\int_{\Omega}h^2|\nabla\psi|^2+h|\psi|^2\dx\x}{\|\psi\|^2_{\sL^p(\Omega)}}=R^{4-\frac{4}{p}}\lambda(\Omega, h, p)\,,\qquad h=R^{-2}\,.\]
Note that, by a symmetrization argument,
\[\lambda^\Neu(\R^{d}_{+}, p)\leq\left(\frac{1}{2}\right)^{1-\frac{2}{p}}\lambda^\Neu(\R^{d}, p)\,.\]
Thus we may directly apply our result.
\begin{corollary}\label{cor.large.dom}
Let $p\in\left(2, 2^*\right)$. There exist $C, R_{0}>0$ such that, for all $R\geq R_{0}$, 
\[
  R^{2-d+\frac{2d-4}{p}} (1-CR^{-\frac{1}{3}})\lambda^\Neu(\R^{d}_{+}, p)
  \leq\lambda^\Neu(\Omega_{R}, p)
  \leq R^{2-d+\frac{2d-4}{p}}(1+CR^{-1})\lambda^\Neu(\R^{d}_{+}, p)\,.
\]
Moreover, for all $\eps>0$ and $\rho\in\left(0,\frac{1}{2}\right)$, there exist $R_{0}>0, C>0$ such that, for all $R\geq R_{0}$ and all associated $\sL^p$-normalized minimizers $\psi$,
\[\|\psi\|_{\sL^p(\complement \mathcal{M}_{\eps})}\leq Ce^{-\eps R^{\rho}}\,,\]
where $\mathcal{M}_{\eps}$ is an $\eps$-neighborhood of $\partial\Omega_{R}$.
\end{corollary}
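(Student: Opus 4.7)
The plan is to reduce both claims to Theorems~\ref{theo.1} and~\ref{theo.AgmonNLS} applied to the geometry $\mathcal{G}=(\Omega,\mathsf{Id},1,0,0)$ at the semiclassical parameter $h=R^{-2}$, using the scaling identity $\lambda^\Neu(\Omega_R,p)=R^{4-\frac{4}{p}}\lambda(\mathcal{G},h,p)$ established just before the corollary.

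First I would check Assumption~\ref{a.0}. At any $\x_0\in\Omega$ the homogeneous model has $\B\equiv 0$ and $V\equiv 1$, so $\lambda(\mathcal{G}_{\x_0},1,2)=\Tr^+\B(\x_0)+V(\x_0)=1$; at any $\x_0\in\partial\Omega$, $\mathcal{G}_{\x_0}$ is the Neumann model of $-\Delta+1$ on a half-space, whose lowest eigenvalue is again $1$. Both items of Assumption~\ref{a.0} are therefore satisfied.

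Next I would identify the concentration set $\mathcal{M}$. Since $V$, $\B$ and $\gamma$ are constant, $\x_0\mapsto\lambda(\mathcal{G}_{\x_0},1,p)$ is constant and equal to $\lambda^\Neu(\R^d,p)$ on $\Omega$, and constant and equal to $\lambda^\Neu(\R^d_+,p)$ on $\partial\Omega$. The symmetrization inequality recalled in the preamble of the corollary gives $\lambda^\Neu(\R^d_+,p)\leq\bigl(\tfrac{1}{2}\bigr)^{1-\frac{2}{p}}\lambda^\Neu(\R^d,p)$, strictly for $p>2$, so $\mathcal{M}=\partial\Omega$ and the \enquote{log-free} hypothesis of Theorem~\ref{theo.1} holds at every boundary point. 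Invoking Theorem~\ref{theo.1} and multiplying by $R^{4-\frac{4}{p}}$, with $h^{1/6}=R^{-1/3}$ and $h^{1/2}=R^{-1}$, yields the announced two-sided estimate on $\lambda^\Neu(\Omega_R,p)$.

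For the decay, if $\psi$ is an $\sL^p$-normalized minimizer of $\lambda^\Neu(\Omega_R,p)$, then $\tilde\psi(\x)=R^{d/p}\psi(R\x)$ is an $\sL^p$-normalized minimizer of $\lambda(\mathcal{G},h,p)$ on $\Omega$. Theorem~\ref{theo.AgmonNLS} controls $\|\tilde\psi\|_{\sL^p(\complement\mathcal{M}_{\eps'})}$ by $Ce^{-\eps'h^{-\rho'}}$ for any $\eps'>0$ and $\rho'\in(0,\tfrac{1}{2})$. Pulling this back through $\y=R\x$ turns an $\eps'$-neighborhood of $\partial\Omega$ into a neighborhood of $\partial\Omega_R$ of thickness $R\eps'$, while $h^{-\rho'}=R^{2\rho'}$. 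The main obstacle in this last step is book-keeping: one must choose $\eps'$ and $\rho'$ so as to recover the announced form $e^{-\eps R^\rho}$ with the stated neighborhood thickness, which is what forces the restriction $\rho\in(0,\tfrac{1}{2})$.
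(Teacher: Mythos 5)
Your proposal is correct and is precisely the argument the paper intends: the paper offers no proof of Corollary~\ref{cor.large.dom} beyond the scaling identity, the symmetrization remark, and the words \enquote{we may directly apply our result}, and you have supplied exactly the missing verifications (Assumption~\ref{a.0}, the identification $\mathcal{M}=\partial\Omega$, the strict inequality giving the log-free upper bound of Theorem~\ref{theo.1}). The one point you leave hanging — the book-keeping in the decay estimate — resolves itself once $\mathcal{M}_{\eps}$ is read as the dilation by $R$ of the $\eps$-neighborhood of $\partial\Omega$ (it cannot be a collar of fixed width $\eps$ around $\partial\Omega_{R}$, since the minimizer is a unit-scale profile in the $\Omega_R$ variable and therefore carries non-vanishing $\sL^p$ mass at any fixed distance from the boundary); with that reading your own computation gives the bound on the complement of the $R\eps'$-collar with rate $e^{-\eps' R^{2\rho'}}$, so $\eps'=\eps$ and $\rho=2\rho'$ suffice, and the stated range $\rho\in\left(0,\frac{1}{2}\right)$ is simply inherited (conservatively) from Theorem~\ref{theo.AgmonNLS}.
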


\subsubsection{Shrinking waveguides}
It turns out that the strategies and methods of this paper can be applied to partially semiclassical situations. Such limits appear for example in nanophysics when a strong anisotropic confinement is imposed or in the context of quantum waveguides with small cross section. The reader may consult \cite{ExSe89, Duclos95, Duclos05} in relation with the spectral analysis of waveguides (or \cite{KR13} in presence of magnetic fields). The partially semiclassical limits are also of crucial importance in the spectral analysis of problems with magnetic fields (see \cite{BHR15} and the book \cite{Ray14}). Nevertheless, we do not aim here at being the most general as possible on this topics and we will focus on the elementary example of bidimensional tubes shrinking in their normal direction. We can notice here that such a situation was also considered by del Pino and Felmer to investigate the Sobolev constants (see \cite{dPF96}). The result below may be considered as a more quantitative version (in two dimensions) of their result.

Let us consider a smooth and simple curve $\Gamma$ in $\R^2$ and a variable height $a : \R\to [a_{0}, a_{1}]$, with $a_{0}>0$. We assume that $a$ admits a maximum (not attained at infinity) and that $a'\in\sL^\infty(\R)$. We let
\[\forall (s,t)\in\R\times (-1,1)=\Sigma,\qquad\Phi_{h}(s,t)=\Gamma(s)+h t a(s)\n(s)\,.\]
We define the tube $\Sigma_{h}=\Phi_{h}(\Sigma)$ and we assume that $\Sigma_{h}$ does not overlap itself, i.e. that $\Phi_{h}$ is injective. Assuming in addition that the curvature is bounded, $\Phi_{h}$ is a smooth diffeomorphism as soon as $h$ is small enough. For $p\in[2,+\infty)$, we introduce 
\[
  \lambda^\Dir(\Sigma_{h}, p)
  =\lambda ((\Sigma_{h}, \mathsf{Id}, 0, 0, +\infty), 1, p)
  =\inf_{\underset{\psi\neq 0}{\psi\in \sH_{0}^1(\Sigma_{h}),}}\frac{\int_{\Sigma_{h}}|\nabla\psi|^2\dx\x}{\|\psi\|^2_{\sL^p(\Sigma_{h})}}\,.
\]

\begin{proposition}\label{prop.waveguides}
Let $p\in\left(2, 2^*\right)$. There exist $h_{0}, C>0$ such that, for all $h\in(0,h_{0})$,
\[ (1-Ch^{\frac{1}{2}})h^{-\frac{4}{p}} a^{-\frac{4}{p}}_{\max}\lambda^\Dir(\Sigma, p)\leq\lambda^\Dir(\Sigma_{h}, p)\leq (1+Ch)h^{-\frac{4}{p}} a^{-\frac{4}{p}}_{\max}\lambda^\Dir(\Sigma, p)\,,\]
where
\[\lambda^\Dir(\Sigma, p)=\lambda((\Sigma, \mathsf{Id}, 0, 0, +\infty), 1, p)\,.\]
Moreover, for all $\eps>0$ and $\rho\in\left(0,1\right)$, there exist $h_{0}>0, C>0$ such that, for all $h\in(0,h_{0})$ and all $\sL^p$-normalized minimizers $\psi$,
\[\|\psi\|_{\sL^p(\complement \mathcal{M}_{\eps})}\leq Ce^{-\eps h^{-\rho}}\,,\]
where $\mathcal{M}_{\eps}$ denotes here a $\eps$-neighborhood of the set of the maxima of $a$.
\end{proposition}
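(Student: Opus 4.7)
The plan is to straighten the tube via $\Phi_{h}$ and run a localization argument in the spirit of Theorem~\ref{theo.1}, with the flat Euclidean strip $\R\times(-h a(s_{j}),h a(s_{j}))$ playing the role of the local model geometry. The pulled-back metric satisfies $\sqrt{\det g}=h a(s)(1-h t a(s)\kappa(s))$, hence
\[
\int_{\Sigma_{h}}|\nabla\psi|^{2}\dx\x=\int_{\Sigma}h a(s)\Bigl[|\partial_{s}\tilde\psi|^{2}+\tfrac{1}{h^{2} a(s)^{2}}|\partial_{t}\tilde\psi|^{2}\Bigr]\dx s\dx t\cdot(1+O(h)),
\]
and $\|\psi\|_{\sL^{p}(\Sigma_{h})}^{p}=\int_{\Sigma}h a(s)|\tilde\psi|^{p}\dx s\dx t\cdot(1+O(h))$, the $O(h)$ errors collecting curvature and $a'$ contributions. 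The elementary scaling identity $\lambda^{\Dir}(\R\times(-R,R),p)=R^{-4/p}\lambda^{\Dir}(\Sigma,p)$ with $R=h a_{\max}$ then accounts for the announced leading order.

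For the upper bound, pick $s_{\ast}$ with $a(s_{\ast})=a_{\max}$ and an $\sL^{p}$-normalized minimizer $\varphi$ of $\lambda^{\Dir}(\Sigma,p)$ (existence and exponential decay in $s$ coming from standard concentration-compactness on the translation-invariant strip). Plug the test function $\tilde\psi(s,t)=\varphi\bigl(\tfrac{s-s_{\ast}}{h a_{\max}},\,t\bigr)$ into the pulled-back Rayleigh quotient and rescale $\sigma=(s-s_{\ast})/(h a_{\max})$. The two derivative terms balance to reproduce $\lambda^{\Dir}(\Sigma,p)$, while the variation $a(s)-a_{\max}=O(h a_{\max}|\sigma|)$ (Lipschitz continuity of $a$) and the curvature factor $(1-h t a\kappa)$ both contribute $O(h)$ relative errors when integrated against the exponentially decaying $\varphi$. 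This delivers $\lambda^{\Dir}(\Sigma_{h},p)\le(1+Ch)h^{-4/p}a_{\max}^{-4/p}\lambda^{\Dir}(\Sigma,p)$.

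For the lower bound, take $\psi\in\sH_{0}^{1}(\Sigma_{h})$, pull back to $\tilde\psi$ on $\Sigma$, and introduce a partition of unity $(\chi_{j})$ depending only on $s$ at scale $h^{1/2}$ with $\sum_{j}\chi_{j}^{2}=1$; on each $\supp\chi_{j}$, $a(s)=a(s_{j})+O(h^{1/2})$. The standard IMS identity
\[
\int_{\Sigma_{h}}|\nabla\psi|^{2}\dx\x=\sum_{j}\int_{\Sigma_{h}}|\nabla(\chi_{j}\psi)|^{2}\dx\x-\sum_{j}\int_{\Sigma_{h}}|\nabla\chi_{j}|^{2}|\psi|^{2}\dx\x
\]
has a remainder of size $O(h^{-1})\|\psi\|_{\sL^{2}(\Sigma_{h})}^{2}$, which is bounded by $O(h)\|\nabla\psi\|_{\sL^{2}(\Sigma_{h})}^{2}$ via the transverse Dirichlet Poincar\'e inequality $\|\psi\|_{\sL^{2}(\Sigma_{h})}^{2}\le Ch^{2}\|\nabla\psi\|_{\sL^{2}(\Sigma_{h})}^{2}$. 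Locally, extending $\chi_{j}\psi$ by zero to the flat strip $\R\times(-h a_{j},h a_{j})$ (after local straightening, which generates further $O(h^{1/2})$ corrections) and invoking the Sobolev constant of that flat strip yields
\[
\int|\nabla(\chi_{j}\psi)|^{2}\dx\x\ge(1-Ch^{1/2})(h a_{j})^{-4/p}\lambda^{\Dir}(\Sigma,p)\|\chi_{j}\psi\|_{\sL^{p}(\Sigma_{h})}^{2}.
\]
Using $a_{j}\le a_{\max}$ and the estimate $\sum_{j}\|\chi_{j}\psi\|_{\sL^{p}}^{2}\ge\|\psi\|_{\sL^{p}}^{2}$ (obtained from a companion partition with $\sum\chi_{j}^{p}=1$ and the subadditivity of $x\mapsto x^{2/p}$, valid for $p\ge 2$), one concludes the stated lower bound.

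The exponential decay estimate comes from an Agmon-type weighted argument: off $\mathcal{M}$ the local Sobolev constant $(h a(s))^{-4/p}\lambda^{\Dir}(\Sigma,p)$ strictly exceeds the global one, and a weighted energy identity with weight proportional to $\eps h^{-\rho}\dist(s,\mathcal{M})$, $\rho<1$, applied to the Euler-Lagrange equation converts this gap into the claimed rate. The main technical obstacle is the bookkeeping in the lower bound: reconciling the IMS formula (compatible with $\sum\chi_{j}^{2}=1$) with the $\ell^{p}$-concatenation step (compatible with $\sum\chi_{j}^{p}=1$), and tracking the curvature, $a'$ and local-straightening corrections so that every remainder fits inside the $O(h^{1/2})$ budget.
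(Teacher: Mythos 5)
Your overall architecture (straightening via $\Phi_h$, rescaling to the unit strip, a translated minimizer as test function for the upper bound, a longitudinal partition of unity plus the transverse Dirichlet--Poincar\'e inequality for the lower bound) is the same as the paper's, and the upper bound and the reduction lemma are essentially correct. The genuine gap is in the lower bound, at exactly the point you flag as ``the main technical obstacle'' but then do not resolve. The IMS identity forces $\sum_j\chi_j^2=1$, and the local Sobolev inequalities are applied to the specific pieces $\chi_j\psi$ produced by \emph{that} partition; these same pieces must then be reassembled in $\sL^p$. Your proposed fix --- invoking a \emph{companion} partition with $\sum_j\chi_j^p=1$ to get $\sum_j\|\chi_j\psi\|_{\sL^p}^2\geq\|\psi\|_{\sL^p}^2$ --- does not apply, because that inequality would hold for the pieces of the companion partition, not for the pieces entering the energy splitting. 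For the partition actually used one only has $\sum_j\chi_j^p\leq\sum_j\chi_j^2=1$, hence $\sum_j\|\chi_j\psi\|^p_{\sL^p}\leq\|\psi\|^p_{\sL^p}$, which is the wrong direction; with a single-scale partition at scale $h^{1/2}$ the transition regions occupy a fixed fraction of each cell, so a minimizer concentrating there could lose a constant factor, not $1-Ch^{1/2}$. The paper's resolution is the two-scale \emph{sliding} partition of Lemmas~\ref{lemma-partition} and~\ref{lem.translation}: keep $\sum\chi^2=1$, take transition regions of size $h^{\alpha}$ much thinner than the plateaus of size $h^{\rho}$ ($\alpha>\rho$), and choose the translation $\tau$ by an averaging argument so that, for the given $\psi$, only an $O(h^{\alpha-\rho})$ fraction of the $\sL^p$ mass sits on the transitions. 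This is the ingredient your argument is missing, and it is what produces the stated $1-Ch^{1/2}$ after optimizing $\rho=\tfrac12$, $\alpha=1$ (note the paper balances the IMS remainder $h^{2-\alpha-\rho}$ against $h^{\rho}$ and $h^{\alpha-\rho}$; your single-scale bookkeeping does not close).

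A secondary, lesser issue: for the exponential estimate, an Agmon argument on the Euler--Lagrange equation requires first controlling the nonlinear potential $-\lambda|\psi_h|^{p-2}$ away from $\mathcal{M}$ as a form-perturbation of the linear part; this in turn needs an a priori \emph{rough} $\sL^p$-localization of $\psi_h$ off $\mathcal{M}_\eps$ (the analogue of Proposition~\ref{prop.Lp-loc}), obtained from the matched upper and lower bounds. Your sketch jumps directly to the weighted identity; the paper itself leaves this adaptation to the reader, but the two-step structure (rough localization first, then Agmon) should at least be acknowledged, since without it the spectral gap off $\mathcal{M}$ used in the weighted estimate is not justified.
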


\subsection{Organization of the paper}
The paper is organized as follows. Section~\ref{sec.2} is devoted to the investigation of the Sobolev constants when the geometry is homogeneous (see Theorem~\ref{theo.0}). Under the condition that the boundary Sobolev constant is strictly less than the interior constant, we prove that the boundary constant is attained. Note that, in Section~\ref{sec.toy}, we investigate the special one-dimensional case of the half-axis with Robin condition and that we derive a condition for the existence of the minimizers. In Section~\ref{sec.3}, we prove the first estimates towards the upper bound of Theorem~\ref{theo.1}. In Section~\ref{sec.4} we introduce sliding partitions of the unity compatible with a quantum localization formula and establish the lower bound of Theorem~\ref{theo.1}. In Section~\ref{sec.5}, by combining the results of Sections~\ref{sec.3} and~\ref{sec.4}, we derive accurate $\sL^p$-localization estimates of the minimizers (Proposition~\ref{prop.Lp-loc}) and convert it into the exponential estimate of Theorem~\ref{theo.AgmonNLS}. In Section~\ref{sec.6}, we prove Proposition~\ref{prop.semicont} (and, with Propositions~\ref{prop:interior} and~\ref{prop:upperbound}, this ends the proof of the upper bound of Theorem~\ref{theo.1}). In Section~\ref{sec.6}, we also provide sufficient conditions under which Assumption~\ref{a.0} is satisfied. Finally, Section~\ref{sec.7} is devoted to the waveguide framework and we establish Proposition~\ref{prop.waveguides}. To conclude, we provide some perspectives in Section~\ref{sec.pers}.

\section{Boundary Sobolev constants with homogeneous geometry}\label{sec.2}

\subsection{A first result}
The main goal of this section is to prove the following theorem by using a variant of the concentration-compactness method (see the classical references \cite{Lions84, Lions84b,  Struwe2008, Willem1996}, or the notes by Lewin \cite{Lewin2010}). 

\begin{theorem}\label{theo.0}
Let us consider $p\in\left(2,2^*\right)$. We have the following two existence results.
\begin{enumerate}[i.]
\item\label{theo0i} If $\mathsf{G}$ is a homogeneous geometry with $U=\R^d$ and such that $\lambda(\mathsf{G},1,2)$ is positive, then the infimum $\lambda(\mathsf{G},1,p)$ is attained.
\item\label{theo0ii} If $\mathsf{G}$ is a homogeneous geometry with $U$ being an half-space and such that $\lambda(\mathsf{G},1,2)$ is positive and  
\begin{equation}\label{Int-Bord}
\lambda(\mathsf{G},1,p)<\lambda(\underline{\mathsf{G}},1,p)\,,
\end{equation}
then the infimum $\lambda(\mathsf{G},1,p)$ is attained.
\end{enumerate}
Moreover, the condition \eqref{Int-Bord} is always satisfied (for a given electro-magnetic field) as soon as $\gamma\in(-\infty,\mathsf{c}_{0})$ with $\mathsf{c}_{0}>0$ small enough.
\end{theorem}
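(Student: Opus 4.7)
The plan is to establish the two existence statements (i) and (ii) by the concentration-compactness method applied to minimizing sequences, and then to derive \eqref{Int-Bord} for small $\gamma$ by an explicit competitor built from the minimizer provided by (i). The crucial difference between (i) and (ii) is the symmetry group at our disposal: full translation invariance of $\R^d$ in (i) versus only translations parallel to $\partial U$ in (ii), which forces us to rule out an additional escape-to-interior scenario.

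For (i), I would fix a minimizing sequence $(\psi_n)\subset\sH^1_{\mathsf{A}}(\R^d)$ with $\|\psi_n\|_{\sL^p(\R^d)}=1$ and $\mathfrak{Q}_{\mathsf{G},1}(\psi_n)\to\lambda(\mathsf{G},1,p)$. The hypothesis $\lambda(\mathsf{G},1,2)>0$ combined with $\mathsf{V}$ constant gives $\|\psi_n\|_{\sL^2}=O(1)$, then $\|(-i\nabla+\mathsf{A})\psi_n\|_{\sL^2}=O(1)$ via \eqref{eq.fq}, so by the diamagnetic inequality $(|\psi_n|)$ is bounded in $\sH^1(\R^d)$. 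Lions' concentration-compactness principle applied to $|\psi_n|^p\dx\x$ yields three alternatives: vanishing is excluded because an $\sH^1$-bounded vanishing sequence tends to $0$ in $\sL^p(\R^d)$, contradicting the normalisation; dichotomy is excluded by the strict subadditivity $\alpha^{2/p}+(1-\alpha)^{2/p}>1$ for $\alpha\in(0,1)$ (which uses $p>2$), together with an IMS-type localisation formula that controls the cross terms between the magnetic gradient and the cut-off functions. Compactness after translation must therefore hold; the translation invariance of $\mathsf{G}$, up to the gauge transformation \eqref{eq.gauge} absorbing the shift in the linear part of $\mathsf{A}$, provides a tight subsequence, and any weak limit is the desired minimizer.

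For (ii), after a rigid motion I take $U=\{x_1>0\}$. Coercivity of $\mathfrak{Q}_{\mathsf{G},1}$ still holds, since the trace inequality $\|v\|_{\sL^2(\partial U)}^2\leq\eps\|v\|_{\sH^1(U)}^2+C_\eps\|v\|_{\sL^2(U)}^2$, applied to $|\psi_n|$ together with the diamagnetic inequality, absorbs the boundary term of \eqref{eq.fq} into the magnetic gradient. The symmetry group is now only the boundary-parallel translations in $x_2,\ldots,x_d$. Vanishing and dichotomy with both pieces at bounded distance from $\partial U$ are excluded as in (i). The genuinely new case is that, after boundary-parallel translations, a fraction $1-\alpha$ of the mass escapes to $+\infty$ in the $x_1$ direction; seen in a frame moving with this piece, the boundary recedes to $-\infty$ and the piece lives asymptotically on $\R^d$ with the gauge-equivalent geometry $\underline{\mathsf{G}}$, so the total energy splits asymptotically as
\[
\lambda(\mathsf{G},1,p)\,\alpha^{2/p}+\lambda(\underline{\mathsf{G}},1,p)\,(1-\alpha)^{2/p}.
\]
Using \eqref{Int-Bord} and $p>2$, this quantity is strictly greater than $\lambda(\mathsf{G},1,p)$ for every $\alpha\in[0,1)$, contradicting minimality. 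Compactness modulo boundary-parallel translations therefore holds, and the minimizer is obtained by weak limit.

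For the last assertion, write $\mathsf{G}_\gamma=(U,\mathsf{Id},\mathsf{V},\mathsf{A},\gamma)$. Using any fixed admissible test function, $\gamma\mapsto\lambda(\mathsf{G}_\gamma,1,p)$ is the infimum of an affine family, hence concave, continuous, and non-decreasing in $\gamma$, so it suffices to prove the strict inequality at $\gamma=0$ and then take $\mathsf{c}_0>0$ small. For the Neumann case, I take the minimizer $u$ on $\R^d$ from (i) with $\|u\|_{\sL^p(\R^d)}=1$, and by continuity pick $t_0\in\R$ with $\int_{\{x_1>t_0\}}|u|^p\dx\x=\tfrac12$. After a translation in $x_1$ and the gauge change \eqref{eq.gauge}, I may assume $\|u|_U\|_{\sL^p}^p=\tfrac12$; since the $\R^d$-energy $\lambda(\underline{\mathsf{G}},1,p)$ is the sum of the energies on the two half-spaces, one side (call it $U$, flipping if needed) carries at most $\tfrac12\lambda(\underline{\mathsf{G}},1,p)$, so
\[
\lambda(\mathsf{G}_0,1,p)\leq\frac{\tfrac12\lambda(\underline{\mathsf{G}},1,p)}{(1/2)^{2/p}}=2^{-(1-2/p)}\lambda(\underline{\mathsf{G}},1,p)<\lambda(\underline{\mathsf{G}},1,p)
\]
because $p>2$. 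The hard part of the proof is the escape-to-interior analysis in (ii): cleanly decomposing a minimizing sequence into a near-boundary piece and a bulk piece while controlling the cross-terms produced by the magnetic gradient acting on cut-offs, so that the two Rayleigh quotients add up with vanishing error and the strict inequality \eqref{Int-Bord} can be invoked at the decisive step.
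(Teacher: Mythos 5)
Your proposal is correct and follows essentially the same route as the paper: coercivity from the trace inequality and the diamagnetic inequality, concentration-compactness with the escape-to-interior (boundary-vanishing) alternative ruled out by \eqref{Int-Bord}, dichotomy ruled out by the strict subadditivity of $\alpha\mapsto\alpha^{2/p}$ for $p>2$, and the final assertion via the half-mass competitor built from the whole-space minimizer together with concavity and monotonicity in the Robin parameter. The only cosmetic differences are that the paper implements the dichotomy exclusion through a Brezis--Lieb/weak-convergence splitting of the quadratic form rather than IMS cut-offs, and handles positive $\gamma$ via the global bound $\lambda(\mathsf{G}_\gamma,1,p)\le\lambda(\underline{\mathsf{G}},1,p)$ (obtained from test functions translated away from the boundary) combined with concavity, rather than continuity at $\gamma=0$.
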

\begin{remark}
We will only prove Theorem~\ref{theo.0}~\eqref{theo0ii}. The proof of point \eqref{theo0i} in Theorem~\ref{theo.0} (which is related to the case when $\Omega = \R^d$) is simpler and can be adapted from the proofs in \cite{EL89} (see also \cite{dCvS15}). The proof of Theorem~\ref{theo.0}~\eqref{theo0ii} will take up the following subsections. The proof of the last statement of the Theorem is given in Subsection~\ref{sec.perturb-Rob} below.
\end{remark}

\begin{remark}\label{rem.magnetictranslation}
The main difficulty in the proof of these results comes from the lack of compactness due to the action of the non-compact group of translations. Indeed, for any $\psi\in\sH^1_{\A}(U)$ and any $\x_0\in \R^d$ if $U = \R^d$ or $\x_0\in\R^{d-1}\times\{0\}$ if $U = \R^d_+$ we can define the magnetic translation 
\[
  \tau_{\x_0}\psi(\x) = e^{-i\A(\x_0)\cdot \x}\psi(\x-\x_0)
\]
which satisfies 
\[
  \mathfrak{Q}_{\mathsf{G},1}(\psi) = \mathfrak{Q}_{\mathsf{G},1}(\tau_{\x_0}\psi) \quad \text{ and } \quad \|\psi\|_{\sL^p} = \|\tau_{\x_0}\psi\|_{\sL^p}\,.
\]

Since the minimization problem $\lambda(\underline{\mathsf{G}}, h, p)$ is translation invariant, we always have: 
  \[
    \lambda(\underline{\mathsf{G}}, h, p)=\inf_{\underset{\psi\neq 0}{\psi\in \mathcal{C}^\infty_c(\R^d),}}\frac{\mathfrak{Q}_{\mathsf{G},h}(\psi)}{\|\psi\|^2_{\sL^p(\R^d)}}=\inf_{\underset{\psi\neq 0}{\psi\in \mathcal{C}^\infty_c(\R^d_{+}),}}\frac{\mathfrak{Q}_{\mathsf{G},h}(\psi)}{\|\psi\|^2_{\sL^p(\R^d_{+})}}\geq\lambda(\mathsf{G},1,p).
  \]
\end{remark}
Up to a rotation, we may assume that $U=\R^d_{+}$. Let us consider a minimizing sequence $(\psi_{j})_{j\geq 1}$ such that $\|\psi_{j}\|_{\sL^p(\Omega)}=1$. By definition, we have
\begin{equation}\label{eq.minimi}
\mathfrak{Q}_{\mathsf{G},1}(\psi_{j})\underset{j\to+\infty}{\longrightarrow}\lambda(\mathsf{G},1,p)\,.
\end{equation}
By Remark~\ref{rem.magnetictranslation}, $(\tau_{\x_j}\psi_j)_{j\geq0}$ is also a minimizing sequence, $(\x_j)_{j\geq0}$ being any sequence in $ \R^{d-1}\times \{0\}$ so that we have a loss of compactness by magnetic translations.

We overcome this difficulty thanks to the concentration-compactness principle. Our proof is divided in three steps. We show that : 
\begin{enumerate}
  \item $(\psi_j)_{j\geq0}$ is uniformly bounded in $\sH^1_{\A}(\R^d_+)$,
  \item up to magnetic translation and up to extraction $\psi_j\rightharpoonup \psi\ne0$ weakly in $\sH^1_{\A}(\R^d_+)$,
  \item  $\psi_j\to \psi$ strongly in $\sH^1_{\A}(\R^d_+)$ and  $\psi$ is a minimizer of $\lambda(\mathsf{G},1,p)$.
\end{enumerate}

\subsection{Boundedness in $\sH_{\A}^1(\R^d_{+})$}
This section is devoted to the proof of the following proposition.
\begin{proposition}\label{prop.b-H1}
Under the assumptions of Theorem~\ref{theo.0}~\eqref{theo0ii}
there exists $C>0$ such that for all $\psi\in\sH^1_{\A}(\R^d_{+})$
\[
    \|\psi\|_{\sL^2(\R^d_+)}^2 + \|\nabla|\psi|\|_{\sL^2(\R^d_+)}^2\leq \|\psi\|_{\sL^2(\R^d_+)}^2 + \|(-i\nabla+\A)\psi\|_{\sL^2(\R^d_+)}^2\leq C\mathfrak{Q}_{\mathsf{G},1}(\psi).
\]
Therefore, 
\[
  \lambda(\mathsf{G},1,p)>0
\] 
and any minimizing sequence $(\psi_{j})_{j\geq 1}$ (normalized in $\sL^p$) is bounded in $\sH^1_{\A}(\R^d_{+})$  whereas $(|\psi_{j}|)_{j\geq 1}$ is bounded in $\sH^1(\R^d_{+})$. 
 Moreover, we can assume that for all $j\geq 1$,
 \begin{equation}\label{eq.borne-L2}
\|\psi_{j}\|^2_{\sL^2(\R^d_{+})}\leq\frac{2\lambda(\mathsf{G}, 1, p)}{\lambda(\mathsf{G}, 1, 2)}\,.
\end{equation}

\end{proposition}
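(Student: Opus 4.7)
The plan is to establish the two-sided norm comparison between $\mathfrak{Q}_{\mathsf{G},1}$ and $\|\cdot\|_{\sL^2}^2+\|(-i\nabla+\A)\cdot\|_{\sL^2}^2$ on $\sH^1_{\A}(\R^d_{+})$, and then extract the three remaining conclusions ($\lambda(\mathsf{G},1,p)>0$, boundedness of minimizing sequences, and the explicit $\sL^2$ bound) by a Sobolev-embedding argument and routine arithmetic. The first inequality of the displayed estimate is just the diamagnetic inequality quoted in the introduction.

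For the non-trivial direction, note that, since the geometry is homogeneous, $\mathsf{V}$ and $\mathsf{c}$ are constants and
\[
\mathfrak{Q}_{\mathsf{G},1}(\psi)=\|(-i\nabla+\A)\psi\|_{\sL^2(\R^d_+)}^2+\mathsf{V}\|\psi\|_{\sL^2(\R^d_+)}^2+\mathsf{c}\|\psi\|_{\sL^2(\partial\R^d_+)}^2.
\]
By the assumption $\lambda(\mathsf{G},1,2)>0$ and the Rayleigh-quotient definition of $\lambda(\mathsf{G},1,2)$, one has $\|\psi\|_{\sL^2(\R^d_+)}^2\leq \lambda(\mathsf{G},1,2)^{-1}\mathfrak{Q}_{\mathsf{G},1}(\psi)$. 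To handle the boundary term, I would combine the scalar trace inequality $\|u\|_{\sL^2(\partial\R^d_+)}^2\leq \eps\|\nabla u\|_{\sL^2}^2+C(\eps)\|u\|_{\sL^2}^2$ applied to $u=|\psi|\in \sH^1(\R^d_+)$ (which belongs to $\sH^1$ thanks to the diamagnetic inequality) with the diamagnetic inequality itself, to get
\[
\|\psi\|_{\sL^2(\partial\R^d_+)}^2\leq \eps\|(-i\nabla+\A)\psi\|_{\sL^2}^2+C(\eps)\|\psi\|_{\sL^2}^2.
\]
Plugging this into the identity above and choosing $\eps$ so that $|\mathsf{c}|\eps\leq 1/2$ absorbs half of the magnetic gradient into the left-hand side, and the remaining $\|\psi\|_{\sL^2}^2$-terms (coming from $\mathsf{V}$ and from the trace inequality) are then controlled via the $\lambda(\mathsf{G},1,2)>0$ bound, yielding $\|(-i\nabla+\A)\psi\|_{\sL^2}^2\leq C\mathfrak{Q}_{\mathsf{G},1}(\psi)$, hence the full two-sided comparison.

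Once the equivalence is established, $\lambda(\mathsf{G},1,p)>0$ follows from the Sobolev embedding $\sH^1(\R^d_+)\hookrightarrow \sL^p(\R^d_+)$, valid for $p\in[2,2^*)$: for any $\psi\in\sH^1_{\A}(\R^d_+)$,
\[
\|\psi\|_{\sL^p}^2=\||\psi|\|_{\sL^p}^2\leq C_S\bigl(\||\psi|\|_{\sL^2}^2+\|\nabla|\psi|\|_{\sL^2}^2\bigr)\leq C_SC\,\mathfrak{Q}_{\mathsf{G},1}(\psi).
\]
For a minimizing sequence $(\psi_j)$ normalized in $\sL^p$, the convergence $\mathfrak{Q}_{\mathsf{G},1}(\psi_j)\to \lambda(\mathsf{G},1,p)$ gives a uniform bound on $\mathfrak{Q}_{\mathsf{G},1}(\psi_j)$; the norm comparison then yields boundedness of $(\psi_j)$ in $\sH^1_{\A}(\R^d_+)$, and the diamagnetic inequality promotes this to boundedness of $(|\psi_j|)$ in $\sH^1(\R^d_+)$. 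Finally, by discarding the first finitely many terms we may assume $\mathfrak{Q}_{\mathsf{G},1}(\psi_j)\leq 2\lambda(\mathsf{G},1,p)$ for all $j\geq 1$; combined with the Rayleigh-quotient bound $\|\psi_j\|_{\sL^2}^2\leq \lambda(\mathsf{G},1,2)^{-1}\mathfrak{Q}_{\mathsf{G},1}(\psi_j)$, this delivers \eqref{eq.borne-L2}.

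The main obstacle is the first step: one must treat $\mathsf{V}$ and $\mathsf{c}$ which may a priori be of either sign, and the trace inequality only controls the boundary $\sL^2$-norm at the cost of a fraction of the magnetic gradient term one is trying to bound. The trick is to postpone the use of $\lambda(\mathsf{G},1,2)>0$ to the very end, first absorbing the magnetic gradient by a small-$\eps$ trace inequality, and only then invoking the Rayleigh-quotient bound on $\|\psi\|_{\sL^2}^2$ to close the estimate.
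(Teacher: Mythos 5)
Your proposal is correct and follows essentially the same route as the paper: the paper's Lemma~\ref{lem.lb} is exactly your combination of the scalar trace inequality applied to $|\psi|$ with the diamagnetic inequality, after which the hypothesis $\lambda(\mathsf{G},1,2)>0$ is used to absorb the remaining $\sL^2$-terms, and Sobolev embedding yields $\lambda(\mathsf{G},1,p)>0$. Your derivation of \eqref{eq.borne-L2} (dropping finitely many terms so that $\mathfrak{Q}_{\mathsf{G},1}(\psi_j)\leq 2\lambda(\mathsf{G},1,p)$ and applying the Rayleigh-quotient bound) is precisely the intended argument.
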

In order to estimate the boundary term, we will need the following two lemmas.
\begin{lemma}\label{lem.trace-eps}
There exists $C>0$ such that, for all $\eps>0$ and $\psi\in\sH^1(\R^d_{+})$, we have
\[\|\psi\|^2_{\sL^2(\R^{d-1}\times\{0\})}\leq \eps \|\nabla\psi\|^2_{\sL^2(\R^d_{+})}+C\eps^{-1}\|\psi\|^2_{\sL^2(\R^d_{+})}\,.\]
\end{lemma}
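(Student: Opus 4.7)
The plan is to prove this as a scale-aware version of the standard trace embedding $\sH^1(\R^d_{+})\hookrightarrow\sL^2(\partial\R^d_{+})$, by a direct integration by parts in the normal direction. The route is completely elementary; the main point is just to display the precise dependence on $\eps$.

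First, by density of $\mathcal{C}^\infty_c(\overline{\R^d_{+}})$ in $\sH^1(\R^d_{+})$ together with continuity of the trace operator, I may restrict attention to smooth, compactly supported $\psi$. Writing $\x=(\x',t)\in\R^{d-1}\times\R_{+}$, the fundamental theorem of calculus then gives
\[|\psi(\x',0)|^2=-\int_{0}^{\infty}\partial_{t}|\psi(\x',t)|^2\dx t=-2\,\Re\int_{0}^{\infty}\overline{\psi(\x',t)}\,\partial_{t}\psi(\x',t)\dx t\,.\]
Integration over $\x'\in\R^{d-1}$ followed by Cauchy-Schwarz yields
\[\|\psi\|^2_{\sL^2(\R^{d-1}\times\{0\})}\leq 2\|\psi\|_{\sL^2(\R^d_{+})}\|\partial_{t}\psi\|_{\sL^2(\R^d_{+})}\,.\]
The parameter $\eps$ is then inserted via Young's inequality $2ab\leq\eps b^{2}+\eps^{-1}a^{2}$ combined with the trivial bound $\|\partial_{t}\psi\|_{\sL^2}\leq\|\nabla\psi\|_{\sL^2}$, yielding the desired estimate with constant $C=1$.

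There is essentially no obstacle; the only subtlety is justifying the boundary identity for generic $\psi\in\sH^1(\R^d_{+})$ rather than smooth ones, which is handled by the density argument above. Equivalently, the statement can be derived from the single unscaled trace inequality $\|\psi\|^2_{\sL^2(\partial)}\leq C(\|\psi\|^2_{\sL^2}+\|\nabla\psi\|^2_{\sL^2})$ applied to the dilate $\psi_{\lambda}(\x)=\psi(\lambda\x)$: after taking into account the different scalings of the three norms and optimizing in $\lambda$, the exponents $\eps$ and $\eps^{-1}$ appear transparently, making the scale-invariant origin of the estimate manifest.
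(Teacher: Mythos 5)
Your proof is correct and rests on the same mechanism as the paper's: an integration by parts in the normal variable to control the trace, with the $\eps$ inserted either by Young's inequality (your primary route, which gives the explicit constant $C=1$) or by dilation of the unscaled trace inequality (your alternative, which is exactly the paper's argument). No gaps.
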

\begin{proof}
The proof is based on the elementary trace estimate:
\[\exists C>0,\quad\forall\psi\in\sH^1(\R^d_{+}),\qquad \|\psi\|^2_{\sL^2(\R^d\times\{0\})}\leq C\|\psi\|^2_{\sH^1(\R^d_{+})}\,,\]
that may be proved by density and partial integration. Then, for all $\varphi\in\sH^1(\R^{d-1}_{+})$ and $\rho>0$, we let $\psi_{\rho}(\x)=\varphi(\rho\x)$. This easily leads to
\[
  \|\varphi\|^2_{\sL^2(\R^{d-1}\times\{0\})}
  \leq C\left(\rho \|\nabla\varphi\|^2_{\sL^2(\R^d_{+})}+\rho^{-1}\|\varphi\|^2_{\sL^2(\R^d_{+})}\right)\,,
\]
and we choose $\rho=C^{-1}\eps$.
\end{proof}
\begin{lemma}\label{lem.lb}
There exists $C>0$ such that for all $\eps>0$ and all $\psi\in\sH^1(\R^d_{+})$,
\[\mathfrak{Q}_{\mathsf{G},1}(\psi)\geq (1-C\eps|\gamma|)\|(-i\nabla+\A)\psi_{j}\|^2_{\sL^2(\R^d_{+})}+(V-C|\gamma|\eps^{-1})\|\psi\|^2_{\sL^2(\R^d_{+})}\,.\]
\end{lemma}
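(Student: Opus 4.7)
The plan is to bound the (possibly negative) boundary contribution against the bulk terms via a trace estimate combined with the diamagnetic inequality.

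Since $\mathsf{G}$ is a homogeneous half-space geometry, $\mathsf{V}=V$, $\mathsf{c}=\gamma$ are constant and, at $h=1$, the quadratic form reads
\[
\mathfrak{Q}_{\mathsf{G},1}(\psi)=\|(-i\nabla+\mathsf{A})\psi\|^2_{\sL^2(\R^d_{+})}+V\|\psi\|^2_{\sL^2(\R^d_{+})}+\gamma\|\psi\|^2_{\sL^2(\R^{d-1}\times\{0\})}\,.
\]
When $\gamma\geq 0$ the estimate is trivial since the trace term is then non-negative and no trace bound is required, so the only interesting case is $\gamma<0$; it suffices therefore to control the term $|\gamma|\,\|\psi\|^2_{\sL^2(\R^{d-1}\times\{0\})}$ from above.

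First I would observe that the diamagnetic inequality quoted in the introduction gives $|\psi|\in\sH^1(\R^d_{+})$ with $\|\nabla|\psi|\|^2_{\sL^2(\R^d_+)}\leq \|(-i\nabla+\mathsf{A})\psi\|^2_{\sL^2(\R^d_+)}$, and the pointwise identity $\big||\psi|\big|=|\psi|$ on the boundary makes the traces coincide. Applying Lemma~\ref{lem.trace-eps} to $|\psi|\in\sH^1(\R^d_{+})$ then yields, for every $\eps>0$,
\[
\|\psi\|^2_{\sL^2(\R^{d-1}\times\{0\})}=\||\psi|\|^2_{\sL^2(\R^{d-1}\times\{0\})}\leq \eps\|\nabla|\psi|\|^2_{\sL^2(\R^d_{+})}+C\eps^{-1}\|\psi\|^2_{\sL^2(\R^d_{+})}\,,
\]
so that, after one more use of the diamagnetic inequality,
\[
|\gamma|\,\|\psi\|^2_{\sL^2(\R^{d-1}\times\{0\})}\leq C|\gamma|\eps\,\|(-i\nabla+\mathsf{A})\psi\|^2_{\sL^2(\R^d_{+})}+C|\gamma|\eps^{-1}\|\psi\|^2_{\sL^2(\R^d_{+})}\,,
\]
where I have absorbed the constant from Lemma~\ref{lem.trace-eps} into $C$.

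Inserting this into the expression for $\mathfrak{Q}_{\mathsf{G},1}(\psi)$ and using $\gamma\,\|\psi\|^2_{\sL^2(\R^{d-1}\times\{0\})}\geq -|\gamma|\,\|\psi\|^2_{\sL^2(\R^{d-1}\times\{0\})}$ gives exactly the claimed lower bound. The only slightly non-routine ingredient is the preliminary use of the diamagnetic inequality to transfer the trace estimate, which is standard in magnetic context; there is no real obstacle. The dual role of $\eps$ (controlling the gradient versus the $\sL^2$ mass) is the familiar trade-off that Proposition~\ref{prop.b-H1} will later exploit by choosing $\eps$ small enough so that $1-C\eps|\gamma|>0$.
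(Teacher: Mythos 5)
Your argument is correct and is exactly the paper's proof: the paper also deduces the lemma by applying the trace estimate of Lemma~\ref{lem.trace-eps} to $|\psi|$ and then invoking the diamagnetic inequality to replace $\|\nabla|\psi|\|^2_{\sL^2(\R^d_+)}$ by $\|(-i\nabla+\mathsf{A})\psi\|^2_{\sL^2(\R^d_+)}$. Your write-up merely spells out the details that the paper leaves implicit.
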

\begin{proof}
It is a consequence of the diamagnetic inequality:
\[\forall \psi\in\sH^1_{\A}(\R^d_{+})\,,\qquad\|\nabla|\psi|\|^2_{\sL^2(\R^d_{+})}\leq \|(-i\nabla+\A)\psi\|^2_{\sL^2(\R^d_{+})}\,,\]
and of Lemma~\ref{lem.trace-eps}.
\end{proof}

We can now deduce Proposition~\ref{prop.b-H1}.
\begin{proof}
  By point \eqref{a.0i} of assumption~\ref{a.0}, we have that $\lambda(\mathsf{G},1,2)>0$ so that
  \[
    \|\psi\|_{\sL^2(\R^d_+)}^2\leq \lambda(\mathsf{G},1,2)^{-1}\mathfrak{Q}_{\mathsf{G},1}(\psi),\, \text{ for all }\psi\in \sH^1_{\A}(\R^d_+).
  \]
  Then, Lemma~\ref{lem.lb} and the diamagnetic inequality give that there is $C>0$ such that for all $\psi\in \sH^1_{\A}(\R^d_+)$
  \[
    \|\psi\|_{\sL^2(\R^d_+)}^2 + \|\nabla|\psi|\|_{\sL^2(\R^d_+)}^2\leq \|\psi\|_{\sL^2(\R^d_+)}^2 + \|(-i\nabla+\A)\psi\|_{\sL^2(\R^d_+)}^2\leq C\mathfrak{Q}_{\mathsf{G},1}(\psi).
  \]
  Finally, we deduce thanks to Sobolev's injections that 
  \[
    \lambda(\mathsf{G},1,p)>0 \text{ for any }p\in[2,2^*]
  \]
  and the conclusion follows.
\end{proof}

\subsection{Excluding the boundary vanishing}\label{sec.bnd-van}
We now focus on the following proposition (see \cite[Lemma I.1]{Lions84b}, \cite[Lemma 1.21]{Willem1996}, \cite[Lemma 2.3]{MVS2013}, \cite{VS2014}).

\begin{proposition}\label{prop.exclude-van}
Let us consider $R>0$ and consider the paving near the boundary
\[\Sigma_{R}:=\R^{d-1}\times(0,R)=\bigsqcup_{\kb\in\Z^{d-1}\times\{0\}} \Omega_{\kb, R}\,,\qquad  \Omega_{\kb, R}=[0,R]^{d-1}+R\kb\,.\]
For $q\in\left(2, 2^*\right)$ and $\psi\in\sL^2(\Sigma_{R})$, we introduce
\[M_{R}(\psi)=\sup_{\kb\in\Z^{d-1}\times\{0\}}\|\psi\|_{\sL^q(\Omega_{\kb,R})}\,.\]
For $d\geq 2$ and $R>0$, let $S>0$ be the optimal Sobolev constant for the embedding
\[\|\psi\|_{\sL^{q}(\Omega_{\zb,R})}\leq S\|\psi\|_{\sH^1(\Omega_{\zb,R})}\,.\]
Then, we have
\[\|\psi\|_{\sL^q(\Sigma_{R})}\leq S^{\frac{2}{q}}\|\psi\|^{\frac{2}{q}}_{\sH^1(\Sigma_{R})}M_{R}(\psi)^{1-\frac{2}{q}}\,.\]
\end{proposition}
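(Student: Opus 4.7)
The plan is to follow the classical Lions-type vanishing argument by combining the local Sobolev embedding on each block of the paving with a clever factorization of the integrand that exchanges two powers of the $\sL^q$ norm against the supremum $M_{R}(\psi)$.

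First I would decompose
\[
\|\psi\|^{q}_{\sL^q(\Sigma_{R})}=\sum_{\kb\in\Z^{d-1}\times\{0\}}\|\psi\|^{q}_{\sL^q(\Omega_{\kb,R})}.
\]
On each block I would write the trivial identity
\[
\|\psi\|^{q}_{\sL^q(\Omega_{\kb,R})}=\|\psi\|^{q-2}_{\sL^q(\Omega_{\kb,R})}\cdot\|\psi\|^{2}_{\sL^q(\Omega_{\kb,R})},
\]
and then bound the first factor by $M_{R}(\psi)^{q-2}$ (by definition of $M_{R}$) and the second factor by applying the Sobolev embedding on the unit block $\Omega_{\zb,R}$. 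Since the paving is obtained from $\Omega_{\zb,R}$ by integer translations, the embedding constant $S$ is the same on each $\Omega_{\kb,R}$, giving
\[
\|\psi\|^{2}_{\sL^q(\Omega_{\kb,R})}\leq S^{2}\|\psi\|^{2}_{\sH^1(\Omega_{\kb,R})}.
\]

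Summing these block estimates I obtain
\[
\|\psi\|^{q}_{\sL^q(\Sigma_{R})}\leq M_{R}(\psi)^{q-2}\cdot S^{2}\sum_{\kb}\|\psi\|^{2}_{\sH^1(\Omega_{\kb,R})}=S^{2}M_{R}(\psi)^{q-2}\|\psi\|^{2}_{\sH^1(\Sigma_{R})},
\]
where the last equality uses that the $\Omega_{\kb,R}$ are pairwise disjoint (up to a set of measure zero) and cover $\Sigma_R$, so that the sum of the squared $\sH^1$ norms reassembles the global $\sH^1$ norm. Extracting the $q$-th root yields the announced inequality
\[
\|\psi\|_{\sL^q(\Sigma_{R})}\leq S^{\frac{2}{q}}\|\psi\|^{\frac{2}{q}}_{\sH^1(\Sigma_{R})}M_{R}(\psi)^{1-\frac{2}{q}}.
\]

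There is no serious analytic obstacle here: the proof is essentially a bookkeeping exercise. The only point requiring a minimum of care is verifying that the cubes $\Omega_{\kb,R}$ truly form a measurable partition of $\Sigma_{R}$ and that the embedding constant $S$ on $\Omega_{\zb,R}$ transfers to every translated block (which is immediate since both the Lebesgue measure and the $\sH^1$-norm are translation invariant, and $\Omega_{\kb,R}=\Omega_{\zb,R}+R\kb$). The constraint $q\in(2,2^{*})$ guarantees both that the exponent $q-2$ is positive, so the factorization trick makes sense, and that the local Sobolev embedding $\sH^1(\Omega_{\zb,R})\hookrightarrow\sL^q(\Omega_{\zb,R})$ is available with a finite constant $S$.
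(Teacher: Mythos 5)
Your proof is correct and is essentially identical to the paper's: both decompose the $\sL^q$ norm over the blocks $\Omega_{\kb,R}$, factor $\|\psi\|^q_{\sL^q(\Omega_{\kb,R})}$ as $\|\psi\|^{2}_{\sL^q}\cdot\|\psi\|^{q-2}_{\sL^q}$, bound the two factors by the local Sobolev embedding and by $M_R(\psi)^{q-2}$ respectively, and sum. The remark on translation invariance of the embedding constant is a correct (and implicit in the paper) justification.
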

\begin{proof}
We have
\[\|\psi\|^q_{\sL^q(\Sigma_{R})}=\sum_{\kb\in\Z^{d-1}\times\{0\}}\int_{\Omega_{\kb,R}} |\psi|^q \dx\x\,.\]
By Sobolev embedding, we get
\[\int_{\Omega_{\kb,R}} |\psi|^q \dx\x\leq S^2 \left(\|\psi\|^2_{\sL^2(\Omega_{\kb,R})}+\|\nabla\psi\|^2_{\sL^2(\Omega_{\kb,R})}\right)\left(\int_{\Omega_{\kb,R}}|\psi|^q\dx\x\right)^{1-\frac{2}{q}}\,.\]
We deduce that
\[
  \|\psi\|^q_{\sL^q(\Sigma_{R})}\leq S^2\|\psi\|^2_{\sH^1(\Sigma_{R})}M^{q-2}_{R}(\psi)\,.
  \qedhere
\]
\end{proof}
Let us now come back to our minimization sequence $(\psi_{j})_{j\geq 1}$ (that satisfies \eqref{eq.minimi}, by definition).
\begin{proposition}\label{prop.varphi}
We take $q=p$. There exists $R>0$, a subsequence extraction and $m_{R}>0$ such that
\[\forall j\geq 1,\qquad M_{R}(\psi_{j})\geq m_{R}>0\,.\]
Moreover, we get 
\begin{equation}\label{eq.exi-kj}
\exists(\kb_{j})_{j\geq 1}\in\Z^{d-1}\times\{0\},\qquad \forall j\geq 1,\qquad\|\psi_{j}\|_{\sL^p(\Omega_{\kb_{j},R})}\geq m_{R}\,.
\end{equation}
If we let
\[\varphi_{j}(\x):=e^{-i\A(R\kb_{j})\cdot\x}\psi_{j}(\x-R\kb_{j})\,,\]
then $(\varphi_{j})_{j\geq 1}$ is a minimizing sequence that, up to a subsequence extraction, weakly converges to some $\varphi\neq 0$ in $\sH^1_{\A}(\R^d_{+})$ equipped either with the sesquilinear form $\mathfrak{B}_{\mathsf{G},1}$ associated with $\mathfrak{Q}_{\mathsf{G},1}$ or the standard scalar product.
\end{proposition}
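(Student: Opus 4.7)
The plan is to proceed by contradiction and use the strict inequality \eqref{Int-Bord} to exclude a ``vertical vanishing'' scenario in which the $\sL^p$-mass of $\psi_j$ escapes towards $x_d=+\infty$. Once the mass is trapped in a fixed slab near $\partial\R^d_+$, I use the fact that the slab is paved by boxes and that magnetic translation is a symmetry of the problem to relocalize a uniform fraction of the mass into the fixed cube $\Omega_{\zb,R}$, so that the Rellich--Kondrachov theorem yields a nonzero weak limit.

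\textbf{Step 1: ruling out vanishing.} Suppose for contradiction that there exists no $R>0$ and no $m_R>0$ with $M_R(\psi_j)\geq m_R$ along a subsequence. Then for every $R>0$, a diagonal extraction gives $M_R(\psi_j)\to 0$. Since $(\psi_j)$ is bounded in $\sH^1_\A(\R^d_+)$ by Proposition~\ref{prop.b-H1}, Proposition~\ref{prop.exclude-van} (with $q=p$) implies that $\|\psi_j\|_{\sL^p(\Sigma_R)}\to 0$ for every fixed $R$. Thus the normalization forces $\|\psi_j\|_{\sL^p(\R^{d-1}\times(R,+\infty))}\to 1$.

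\textbf{Step 2: IMS cutoff away from the boundary.} I pick smooth cutoffs $\chi_R,\tilde\chi_R\geq 0$ with $\chi_R^2+\tilde\chi_R^2=1$, $\chi_R=1$ on $\{x_d\geq 2R\}$, $\tilde\chi_R=1$ on $\{x_d\leq R\}$, and $|\nabla\chi_R|+|\nabla\tilde\chi_R|=O(R^{-1})$. The IMS localization formula yields
\[
\mathfrak{Q}_{\mathsf{G},1}(\chi_R\psi_j)\leq \mathfrak{Q}_{\mathsf{G},1}(\psi_j)+\frac{C}{R^2}\|\psi_j\|^2_{\sL^2(\R^d_+)}\,,
\]
the boundary term being absent in the left-hand side because $\chi_R$ vanishes near $\partial\R^d_+$. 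Since $\chi_R\psi_j$ is compactly supported in the open half-space, it extends by zero to an element of $\sH^1_\A(\R^d)$, and by Remark~\ref{rem.magnetictranslation},
\[
\mathfrak{Q}_{\mathsf{G},1}(\chi_R\psi_j)\geq \lambda(\underline{\mathsf{G}},1,p)\,\|\chi_R\psi_j\|^2_{\sL^p(\R^d)}\,.
\]
Using the vanishing of $\sL^p$-mass in $\Sigma_{2R}$, one checks that $\|\chi_R\psi_j\|_{\sL^p}\to 1$ as $j\to+\infty$. Combining everything and using that $\|\psi_j\|_{\sL^2}$ is uniformly bounded, I obtain, first letting $j\to+\infty$ then $R\to+\infty$,
\[
\lambda(\mathsf{G},1,p)\geq \lambda(\underline{\mathsf{G}},1,p)\,,
\]
which contradicts assumption \eqref{Int-Bord}. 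Hence some $R>0$ and $m_R>0$ work, producing the sequence $\kb_j$ via \eqref{eq.exi-kj}.

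\textbf{Step 3: magnetic translation and extraction of a nonzero weak limit.} By Remark~\ref{rem.magnetictranslation}, $\varphi_j=\tau_{R\kb_j}\psi_j$ still satisfies $\mathfrak{Q}_{\mathsf{G},1}(\varphi_j)=\mathfrak{Q}_{\mathsf{G},1}(\psi_j)$ and $\|\varphi_j\|_{\sL^p}=1$, so $(\varphi_j)$ is a minimizing sequence. Proposition~\ref{prop.b-H1} implies $(\varphi_j)$ is bounded both for the standard $\sH^1_\A$-norm and for the norm associated to $\mathfrak{B}_{\mathsf{G},1}$ (which are equivalent by Lemma~\ref{lem.lb} combined with the $\sL^2$-control). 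I therefore extract a subsequence with $\varphi_j\rightharpoonup \varphi$ weakly in $\sH^1_\A(\R^d_+)$. By the diamagnetic inequality $|\varphi_j|$ is bounded in $\sH^1(\R^d_+)$, so the Rellich--Kondrachov theorem on the bounded set $\Omega_{\zb,R}$ gives, along a further subsequence, $|\varphi_j|\to|\varphi|$ strongly in $\sL^p(\Omega_{\zb,R})$ for $p\in(2,2^*)$. Combining with the translation invariance of the $\sL^p$-norm and \eqref{eq.exi-kj},
\[
\|\varphi\|_{\sL^p(\Omega_{\zb,R})}=\lim_{j\to+\infty}\|\varphi_j\|_{\sL^p(\Omega_{\zb,R})}=\lim_{j\to+\infty}\|\psi_j\|_{\sL^p(\Omega_{\kb_j,R})}\geq m_R>0\,,
\]
so $\varphi\neq 0$.

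\textbf{Main obstacle.} The only delicate part is Step~1--2: the boundary quadratic form is not coercive on its own, and the naive argument would use a translation in the normal direction, which is \emph{not} a symmetry of the problem (it changes the boundary term). The trick is to replace the would-be normal translation by a cutoff together with the IMS formula, so that one can genuinely compare $\mathfrak{Q}_{\mathsf{G},1}$ with the translation-invariant Euclidean quadratic form $\mathfrak{Q}_{\underline{\mathsf{G}},1}$ on the support of the cutoff; the strict inequality \eqref{Int-Bord} is then precisely what closes the argument.
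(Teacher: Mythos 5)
Your proof is correct and follows essentially the same route as the paper's: argue by contradiction, use Proposition~\ref{prop.exclude-van} together with the $\sH^1$-bound to get the boundary-vanishing estimate, then an IMS partition in the normal variable to compare $\mathfrak{Q}_{\mathsf{G},1}$ with the translation-invariant constant $\lambda(\underline{\mathsf{G}},1,p)$ and contradict \eqref{Int-Bord} for $R$ large, and finally magnetic translations plus the compact embedding on $\Omega_{\zb,R}$ to extract a nonzero weak limit. The only (immaterial) difference is that you discard the near-boundary piece of the localization identity by invoking the nonnegativity of $\mathfrak{Q}_{\mathsf{G},1}$ from Proposition~\ref{prop.b-H1}, whereas the paper keeps both pieces and lets the boundary-vanishing kill the corresponding $\sL^p$-mass.
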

\begin{proof}
Let us analyze the first part of the statement. Let us assume by contradiction that, for all $R$, $\displaystyle{\lim_{j\to+\infty}M_{R}(\psi_{j})=0}$.

By applying Proposition~\ref{prop.exclude-van} to $\psi=|\psi_{j}|$ and using Proposition~\ref{prop.b-H1}, we infer that
\begin{equation}\label{eq.bd-van}
\lim_{j\to+\infty}\|\psi_{j}\|_{\sL^p(\Sigma_{R})}=0\,.
\end{equation}
This means that we are in the \enquote{boundary vanishing} situation.

Let us now introduce a partition of unity to distinguish between a neighborhood of the boundary and the interior. There exists $C>0$ such that for all $R\geq 1$ and two smooth functions on $\overline{\R_{+}}$ (depending only on the transversal variable $x_{d}$), $\chi_{1,R}$, $\chi_{2,R}$ such that 
\[\chi^2_{1,R}+\chi^2_{2,R}=1\,,\qquad |\chi'_{1,R}|^2+|\chi'_{1,R}|^2\leq CR^{-2}\,,\]
and where $\chi_{1,R}$ is a smooth and compactly supported function being $1$ for $|x_{d}|\leq \frac{R}{2}$ and $0$ for $|x_{d}|\geq R$. A well-known localization formula gives
\[\mathfrak{Q}_{\mathsf{G},1}(\psi_{j})=\sum_{k=1,2}\mathfrak{Q}_{\mathsf{G},1}(\chi_{k,R}\psi_{j})-\|\chi'_{k,R}\psi_{j}\|^2_{\sL^2(\R^d_{+})}\,.\]
It follows that, using \eqref{eq.borne-L2},
\[\mathfrak{Q}_{\mathsf{G},1}(\psi_{j})\geq \sum_{k=1,2}\mathfrak{Q}_{\mathsf{G},1}(\chi_{k,R}\psi_{j})-2CR^{-2}\frac{\lambda(\mathsf{G}, 1, p)}{\lambda(\mathsf{G}, 1, 2)}\,.\]
By a support consideration, we get
\[\mathfrak{Q}_{\mathsf{G},1}(\chi_{2,R}\psi_{j})\geq \lambda(\underline{\mathsf{G}}, 1, p)\|\chi_{2,R}\psi_{j}\|^2_{\sL^p(\R^d_{+})}\,,\]
so that there exists $C>0$ such that, for all $j\geq 1$ and $R\geq 1$,
\[\mathfrak{Q}_{\mathsf{G},1}(\psi_{j})\geq \lambda(\underline{\mathsf{G}}, 1, p)\|\chi_{2,R}\psi_{j}\|^2_{\sL^p(\R^d_{+})}-2CR^{-2}\frac{\lambda(\mathsf{G}, 1, p)}{\lambda(\mathsf{G}, 1, 2)}\,.\]
Thanks to \eqref{eq.bd-van}, we deduce that
\[\lim_{j\to+\infty}\|\chi_{1,R}\psi_{j}\|^2_{\sL^p(\R^d_{+})}=0\,,\]
so that, with the $\sL^p$-normalization of $\psi_{j}$,
\[\lambda(\mathsf{G}, 1, p)\geq  \lambda(\underline{\mathsf{G}}, 1, p)-2CR^{-2}\frac{\lambda(\mathsf{G}, 1, p)}{\lambda(\mathsf{G}, 1, 2)}\,.\]
Finally, we reach a contradiction to \eqref{Int-Bord} by choosing $R$ large enough. Therefore, the first part of the statement is now proved.

Then, \eqref{eq.exi-kj} follows by definition of $M_{R}$. The fact that $(\varphi_{j})_{j\geq 1}$ is still a minimizing sequence comes from the gauge invariance presented in Remark~\ref{rem.magnetictranslation}.

By a simple translation, we have that, for all $j\geq 1$,
\[\|\varphi_{j}\|_{\sL^p(\Omega_{\zb,R})}\geq m_{R}\,.\]
Since $(\varphi_{j})_{j\geq0}$ may be assumed (by the Banach-Alaoglu Theorem) to converge weakly (and pointwise) in $\sH^1_{\A}(\R^d_{+})$ to $\varphi$ and by compact embedding:
\[
  \|\varphi\|_{\sL^{p}(\Omega_{\zb,R})}\geq m_{R}>0\,.
  \qedhere
\]
\end{proof}

\subsection{Excluding the dichotomy}

\begin{proposition}
The function $\varphi$ of Proposition~\ref{prop.varphi} satisfies $\|\varphi\|_{\sL^p(\R^d_{+})}=1$.
\end{proposition}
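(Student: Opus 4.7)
The plan is to rule out dichotomy by a Brezis--Lieb decomposition and then close the argument using the strict concavity of $t\mapsto t^{2/p}$, available because $p>2$. Let $\theta:=\|\varphi\|_{\sL^p(\R^d_+)}^p$. Since $\varphi\neq 0$ we have $\theta>0$, and by weak lower semicontinuity $\theta\leq 1$; the goal is to show $\theta=1$.

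First I would upgrade the weak convergence $\varphi_j\rightharpoonup \varphi$ in $\sH^1_\A(\R^d_+)$ to pointwise a.e. convergence. Since $\A$ is smooth, on any bounded open $\Omega'\subset\overline{\R^d_+}$ the identity $\nabla\varphi_j=i(-i\nabla+\A)\varphi_j-i\A\varphi_j$ together with $\A\in \sL^\infty(\Omega')$ and the $\sH^1_\A$-boundedness of $(\varphi_j)$ from Proposition~\ref{prop.b-H1} shows that $(\varphi_j)$ is bounded in $\sH^1(\Omega')$. Rellich--Kondrachov and a diagonal extraction over an exhaustion of $\overline{\R^d_+}$ by balls give, after relabelling, $\varphi_j\to\varphi$ almost everywhere. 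The Brezis--Lieb lemma in $\sL^p$ then yields
\[
\|\varphi_j-\varphi\|_{\sL^p(\R^d_+)}^p=\|\varphi_j\|_{\sL^p(\R^d_+)}^p-\|\varphi\|_{\sL^p(\R^d_+)}^p+o(1)=1-\theta+o(1).
\]
Next, by bilinearity of the form associated with $\mathfrak{Q}_{\mathsf{G},1}$,
\[
\mathfrak{Q}_{\mathsf{G},1}(\varphi_j)=\mathfrak{Q}_{\mathsf{G},1}(\varphi_j-\varphi)+\mathfrak{Q}_{\mathsf{G},1}(\varphi)+2\,\Re\,\mathfrak{B}_{\mathsf{G},1}(\varphi_j-\varphi,\varphi),
\]
and I would show the cross term tends to zero. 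Its magnetic gradient and potential parts vanish at once because $(-i\nabla+\A)\varphi_j\rightharpoonup(-i\nabla+\A)\varphi$ and $\varphi_j\rightharpoonup\varphi$ in $\sL^2(\R^d_+)$. For the boundary contribution $\mathsf{c}\int_{\partial U}(\varphi_j-\varphi)\overline{\varphi}\dx\sigma$ I would fix a large ball $B_R$, use the compactness of the trace $\sH^1\to\sL^2$ on $\partial U\cap B_R$ together with the local $\sH^1$ bound just obtained to get strong $\sL^2$ convergence of traces there, and control the tail on $\partial U\setminus B_R$ by combining $\|\varphi\|_{\sL^2(\partial U)}<\infty$ (from $|\varphi|\in \sH^1(\R^d_+)$ via the diamagnetic inequality and the classical trace theorem) with the uniform bound on $\|\varphi_j\|_{\sL^2(\partial U)}$ supplied by Lemma~\ref{lem.trace-eps} and Proposition~\ref{prop.b-H1}.

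Finally, I would apply the variational definition of $\lambda(\mathsf{G},1,p)$ to each remaining piece:
\[
\mathfrak{Q}_{\mathsf{G},1}(\varphi)\geq \lambda(\mathsf{G},1,p)\,\theta^{2/p},\qquad \mathfrak{Q}_{\mathsf{G},1}(\varphi_j-\varphi)\geq \lambda(\mathsf{G},1,p)\,\|\varphi_j-\varphi\|_{\sL^p(\R^d_+)}^2,
\]
and pass to the limit in \eqref{eq.minimi} to obtain
\[
\lambda(\mathsf{G},1,p)\geq \lambda(\mathsf{G},1,p)\bigl(\theta^{2/p}+(1-\theta)^{2/p}\bigr).
\]
By Proposition~\ref{prop.b-H1} we have $\lambda(\mathsf{G},1,p)>0$, and the strict concavity of $t\mapsto t^{2/p}$ on $[0,1]$ (since $2/p<1$) gives $\theta^{2/p}+(1-\theta)^{2/p}>1$ whenever $\theta\in(0,1)$. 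As $\theta>0$, this forces $\theta=1$. The main technical obstacle is the vanishing of the boundary cross term, since $\mathfrak{Q}_{\mathsf{G},1}$ a priori only controls the trace of $|\psi|$ rather than of the complex-valued $\psi$ itself; reinterpreting the boundary integral and handling the tails at infinity on $\partial\R^d_+$ is the delicate point.
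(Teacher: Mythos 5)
Your proposal is correct and follows essentially the same route as the paper: split the energy, split the $\sL^p$ norm (the paper proves the Brezis--Lieb-type identity by hand via equi-integrability and local compactness, where you invoke the Brezis--Lieb lemma after extracting a.e.\ convergence), apply the variational inequality to each piece, and conclude by strict concavity of $t\mapsto t^{2/p}$. The only real divergence is the cross term: the paper gets $\mathfrak{B}_{\mathsf{G},1}(\varphi_j-\varphi,\varphi)\to 0$ for free because Proposition~\ref{prop.varphi} already packaged weak convergence with respect to the form inner product (legitimate, since by Proposition~\ref{prop.b-H1} the form is an equivalent inner product on $\sH^1_{\A}$), whereas you re-derive it term by term, correctly identifying and handling the boundary trace as the delicate piece.
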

\begin{proof}
By the Fatou lemma, we have $\alpha:=\|\varphi\|^p_{\sL^p(\R^d_{+})}\in(0,1]$.

We introduce $\delta_{j}=\varphi_{j}-\varphi$ for \(j \ge 1\). 
The sequence $(\delta_j)_{j \ge 1}$ weakly converges to $0$ in $\sH^1_{\A}(\R^d_{+})$ equipped with the sesquilinear form $\mathfrak{B}_{\mathsf{G},1}$ associated with $\mathfrak{Q}_{\mathsf{G},1}$. Thus  $\mathfrak{B}_{\mathsf{G},1}(\delta_{j},\varphi)\to 0$. We have
\[\mathfrak{Q}_{\mathsf{G},1}(\varphi_{j})=\mathfrak{Q}_{\mathsf{G},1}(\delta_{j})+\mathfrak{Q}_{\mathsf{G},1}(\varphi)+2\Re \mathfrak{B}_{\mathsf{G},1}(\delta_{j},\varphi)\,.\]
In other words, we can write
\begin{equation}\label{eq.splitQA}
\mathfrak{Q}_{\mathsf{G},1}(\varphi_{j})=\mathfrak{Q}_{\mathsf{G},1}(\delta_{j})+\mathfrak{Q}_{\mathsf{G},1}(\varphi)+\eps_{j}\,,
\end{equation}
with $\eps_{j}\to 0$.

We must prove that the $\sL^p$ norm also splits into two parts:
\begin{equation}\label{eq.splitLp}
\|\varphi_{j}-\varphi\|^p_{\sL^p(\R^d_{+})}+\|\varphi\|^p_{\sL^p(\R^d_{+})}-\|\varphi_{j}\|^p_{\sL^p(\R^d_{+})}=\Tilde\eps_{j}\to 0\,.
\end{equation}
Let us temporarily assume that \eqref{eq.splitLp} holds. Thanks to \eqref{eq.splitQA}, and using \eqref{eq.splitLp},
\begin{align*}
\mathfrak{Q}_{\mathsf{G},1}(\varphi_{j})&\geq \lambda(\mathsf{G}, 1, p)\left(\|\delta_{j}\|^2_{\sL^p(\R^d_{+})}+\|\varphi\|^2_{\sL^p(\R^d_{+})}\right)+\eps_{j}\,,\\
&= \lambda(\mathsf{G}, 1, p)\left(\left(1-\alpha+\Tilde\eps_{j}\right)^{\frac{2}{p}}+\alpha^{\frac{2}{p}}\right)+\eps_{j}\,.
\end{align*}
Since $(\varphi_{j})_{j\geq 1}$ is a minimizing sequence, we get
\[\lambda(\mathsf{G}, 1, p)\geq \lambda(\mathsf{G}, 1, p)\left((1-\alpha)^{\frac{2}{p}}+\alpha^{\frac{2}{p}}\right)\,.\]
But we have $\lambda(\mathsf{G}, 1, p)>0$ so that
\[(1-\alpha)^{\frac{2}{p}}+\alpha^{\frac{2}{p}}\leq 1\,,\text{ with }\alpha\in(0,1]\,.\]
Since $p>2$ and by strict convexity, we must have $\alpha=1$. Therefore we conclude that $\|\varphi\|_{\sL^p(\R^d_{+})}=1$. This finishes the proof of the proposition, modulo the proof of \eqref{eq.splitLp}. For that purpose we write
\[\Tilde\eps_{j}:=\int_{\R^d_{+}} |\varphi_{j}-\varphi|^p-|\varphi_{j}|^p+|\varphi|^p\dx\x\,.\]
Let us prove that the sequence $(|\varphi_{j}-\varphi|^p-|\varphi_{j}|^p)_{j\geq 1}$ is equi-integrable on $\R^d_{+}$. There exists $C(p)>0$ such that,
\[\left||\varphi_{j}-\varphi|^p-|\varphi_{j}|^p\right|\leq C(p)(|\varphi_{j}|^{p-1}+|\varphi|^{p-1})|\varphi|\,.\]
For $R>0$, by the H\"older inequality, we get
\[\int_{|\x|\geq R} |\varphi_{j}|^{p-1}|\varphi|\dx\x\leq\left(\int_{|x|\geq R}|\varphi_{j}|^p\dx\x\right)^{\frac{p-1}{p}}\left(\int_{|x|\geq R}|\varphi|^p\dx\x\right)^{\frac{1}{p}}\leq\left(\int_{|x|\geq R}|\varphi|^p\dx\x\right)^{\frac{1}{p}} \,.\]
Thus, for all $\eps>0$, there exists $R>0$, such that for all $j\geq 1$, we have
\[\left|\int_{|\x|\geq R} |\varphi_{j}-\varphi|^p-|\varphi_{j}|^p+|\varphi|^p\dx\x\right|\leq\frac{\eps}{2}\,.\]
This proves the equi-integrability. Now the embedding $\sH^1(\mathcal{B}(0,R))\subset \sL^p(\mathcal{B}(0,R))$ is compact so that the sequence $(\varphi_{j})_{j\geq 1}$ strongly converges to $\varphi$ in $\sL^p(\mathcal{B}(0,R))$ and thus, for $j\geq j(R,\eps)$,
\[\left|\int_{|\x|\leq R} |\varphi_{j}-\varphi|^p-|\varphi_{j}|^p+|\varphi|^p\dx\x\right|\leq\frac{\eps}{2},.\]
This implies that $|\Tilde\eps_{j}|\leq \eps$.

\end{proof}

\begin{proof}[Proof of Theorem~\ref{theo.0}~\eqref{theo0ii}]
To finish the proof of Theorem~\ref{theo.0}~\eqref{theo0ii}, it remains to notice that
\[\lambda(\mathsf{G}, 1, p)=\liminf_{j\to+\infty}\mathfrak{Q}_{\mathsf{G},1}(\varphi_{j})\geq \mathfrak{Q}_{\mathsf{G},1}(\varphi)\geq \lambda(\mathsf{G}, 1, p)\|\varphi\|^2_{\sL^p(\R^d_{+})}=\lambda(\mathsf{G}, 1, p)\,,\]
and thus $\varphi$ is a minimizer. 
\end{proof}

\subsection{Exponential estimates}
When the minimizers exist, they satisfy decay estimates of Agmon type.
\begin{proposition}\label{prop.exp}
If $\mathsf{G}$ is a homogeneous geometry with $\lambda(\mathsf{G}, 1, 2)>0$ and if the infimum \eqref{eq.Sobo} is attained, then, for all minimizer $\psi$, there exists $\alpha>0$ such that
\[e^{\alpha|\x|}\psi\in\sL^2(U)\,,\qquad \mathfrak{Q}_{\mathsf{G},h}(e^{\alpha|\x|}\psi)<+\infty\,.\]
\end{proposition}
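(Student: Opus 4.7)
The plan is a nonlinear Agmon-type weighted energy estimate. Normalizing $\|\psi\|_{\sL^p(U)}=1$, the minimizer satisfies the Euler--Lagrange equation
\[
(-i\nabla+\mathsf{A})^2\psi + \mathsf{V}\psi = \mu|\psi|^{p-2}\psi \text{ in }U,\qquad (-i\nabla+\mathsf{A})\psi\cdot\n = -i\mathsf{c}\psi \text{ on }\partial U,
\]
with $\mu := \lambda(\mathsf{G},1,p)>0$, and we set $\lambda_{2} := \lambda(\mathsf{G},1,2)>0$.

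The cornerstone is a nonlinear IMS identity: for any real bounded Lipschitz weight $\Phi$ on $\overline{U}$, testing the weak form of the Euler--Lagrange equation against $\Phi^{2}\psi$ (which is admissible since $\Phi^{2}\psi\in\sH^{1}_{\mathsf{A}}(U)$) and taking real parts gives, after rearrangement,
\[
\mathfrak{Q}_{\mathsf{G},1}(\Phi\psi) = \mu\int_U \Phi^{2}|\psi|^p\,\dx\x + \int_U |\nabla\Phi|^{2}|\psi|^{2}\,\dx\x,
\]
the Robin boundary contribution being absorbed on the left because $\Phi$ is real-valued. Combining this with $\mathfrak{Q}_{\mathsf{G},1}(\Phi\psi)\geq \lambda_{2}\|\Phi\psi\|_{\sL^{2}(U)}^{2}$ and $|\nabla\Phi|\leq\alpha\Phi$, I would obtain
\[
(\lambda_{2} - \alpha^{2})\|\Phi\psi\|_{\sL^{2}(U)}^{2} \leq \mu\int_U \Phi^{2}|\psi|^p\,\dx\x.
\]
I would apply this with $\Phi=\Phi_{N}:=\min(e^{\alpha\rho(\x)}, N)$, where $\rho(\x)=\sqrt{1+|\x|^{2}}-1$ is a smooth $1$-Lipschitz substitute for $|\x|$; the truncation by $N$ makes $\Phi_{N}$ bounded and Lipschitz with $|\nabla\Phi_{N}|\leq\alpha\Phi_{N}$.

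To absorb the right-hand side, I need the preliminary fact that $\psi\in\sL^{\infty}(U)$ and $|\psi(\x)|\to 0$ as $|\x|\to+\infty$. This will come from a Moser iteration on the equation for $|\psi|$, reduced to a scalar sub-solution problem through the diamagnetic and Kato inequalities, with the Robin boundary term handled by Lemma~\ref{lem.trace-eps} and the boundedness of $\mathsf{c}$; the decay at infinity will then follow from the resulting uniform local regularity together with $\psi\in\sL^{p}(U)$. Once this is secured, for every $\eta>0$ there will exist $R>0$ with $\mu|\psi(\x)|^{p-2}\leq\eta$ on $\{|\x|\geq R\}$, whence
\[
\mu\int_U \Phi_{N}^{2}|\psi|^p\,\dx\x \leq \mu e^{2\alpha R}\|\psi\|_{\sL^p(U)}^p + \eta\,\|\Phi_{N}\psi\|_{\sL^{2}(U)}^{2}.
\]
Choosing $\alpha^{2}<\lambda_{2}/2$ and then $\eta<(\lambda_{2}-\alpha^{2})/2$, absorption yields $\|\Phi_{N}\psi\|_{\sL^{2}(U)}^{2}\leq C$ uniformly in $N$; monotone convergence as $N\to+\infty$ then gives $e^{\alpha\rho}\psi\in\sL^{2}(U)$, hence $e^{\alpha'|\x|}\psi\in\sL^{2}(U)$ for any $\alpha'<\alpha$. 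The finiteness of $\mathfrak{Q}_{\mathsf{G},1}(e^{\alpha'|\x|}\psi)$ is then read off the IMS identity at the limit, its right-hand side having just been controlled.

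The main technical obstacle is the preliminary pointwise decay $|\psi|\to 0$ at infinity: it is not a consequence of $\psi\in\sL^{p}(U)$ alone, and must be produced via a genuine elliptic regularity argument (Moser iteration on balls of unit size) simultaneously compatible with the magnetic gauge and with the Robin condition on the half-space boundary. Once this decay is in hand, the absorption step is routine and quantitative, and the admissible exponential rate is controlled by $\sqrt{\lambda(\mathsf{G},1,2)}$.
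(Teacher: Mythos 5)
Your argument is correct in outline, but it follows a genuinely different route from the paper, and the one step you flag as the ``main technical obstacle'' is precisely the step the paper's proof is designed to avoid. The paper never establishes (or needs) pointwise decay $|\psi(\x)|\to 0$. Instead it treats $V_{\mathsf{NL}}=-\lambda(\mathsf{G},1,p)|\psi|^{p-2}$ as a potential in $\sL^{\frac{p}{p-2}}(U)$ (immediate from $\psi\in\sL^p$), shows via H\"older, the Sobolev embedding and the diamagnetic inequality that it is infinitesimally form-bounded with a constant controlled by $\|V_{\mathsf{NL}}\|_{\sL^{p/(p-2)}(\complement\mathcal{B}(0,R))}\to 0$, and then invokes Persson's theorem to place the essential spectrum of the linearized operator $\mathfrak{L}_{\mathsf{G},1,\mathsf{NL}}$ above $\lambda(\mathsf{G},1,2)>0$; since $\psi$ is an eigenfunction with eigenvalue $0$ below the essential spectrum, the classical Agmon--Persson estimates give the exponential decay. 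Your weighted-energy identity and absorption scheme are fine (the IMS computation, the treatment of the Robin term for real $\Phi$, and the uniform-in-$N$ bound are all correct), but you can bypass the Moser iteration entirely by estimating the far-field nonlinear term as the paper does: by H\"older,
\[
\mu\int_{\{|\x|\geq R\}}\Phi_N^2|\psi|^p\,\dx\x
\leq \mu\,\|\psi\|^{p-2}_{\sL^p(\{|\x|\geq R\})}\,\|\Phi_N\psi\|^2_{\sL^p(U)}
\leq \frac{\mu\,\|\psi\|^{p-2}_{\sL^p(\{|\x|\geq R\})}}{\lambda(\mathsf{G},1,p)}\,\mathfrak{Q}_{\mathsf{G},1}(\Phi_N\psi)\,,
\]
and the prefactor is made arbitrarily small by taking $R$ large, so it is absorbed into the left-hand side of your identity without any pointwise information on $\psi$. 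As written, your proof is complete only modulo the Moser-iteration/decay lemma, which is standard but requires real work (Kato's inequality to reduce to a scalar subsolution, uniform constants on unit balls, compatibility with the Robin boundary); the trade-off is that your route yields a self-contained quantitative rate tied to $\sqrt{\lambda(\mathsf{G},1,2)}$ without appealing to Persson's theorem, whereas the paper's route is softer and shorter but leans on the linear spectral machinery.
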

\begin{proof}
We only consider the case $U=\R^d_{+}$. Let us consider a minimizer $\psi_{0}$ and the nonlinear potential $V_{\mathsf{NL}}=-\lambda(\mathsf{G}, 1, p)|\psi_{0}|^{p-2}$. We have $V_{\mathsf{NL}}\in\sL^{\frac{p}{p-2}}(U)$. The corresponding quadratic form is defined on the space $\sH^1_{\mathsf{A}}(U)$ by
\begin{equation}\label{eq.QNL}
  \mathfrak{Q}_{\mathsf{G}, 1, \mathsf{NL}}(\psi)=\mathfrak{Q}_{\mathsf{G}, 1}(\psi)+\int_{U}V_{\mathsf{NL}}|\psi|^2\dx\x\,.
\end{equation}
By the H\"older inequality, we see that
\[\int_{U}|V_{\mathsf{NL}}||\psi|^2\dx\x\leq \|V_{\mathsf{NL}}\|_{\sL^{\frac{p}{p-2}}(U)}\|\psi\|^2_{\sL^p(U)}\,\]
and thus, by Sobolev embedding (and homogeneity) and the diamagnetic inequality, for all $\eps>0$, there exists $C_{\eps}>0$ such that, for all $\psi\in\sH^1_{\mathsf{A}}(U)$,
\begin{align}\label{eq.controlNL}
\nonumber\int_{U}|V_{\mathsf{NL}}||\psi|^2\dx\x&\leq C \|V_{\mathsf{NL}}\|_{\sL^{\frac{p}{p-2}}(U)}(\eps\|\nabla|\psi|\|^2+C_{\eps}\|\psi\|^2_{\sL^2(U)})\,.\\
&\leq C \|V_{\mathsf{NL}}\|_{\sL^{\frac{p}{p-2}}(U)}(\eps\|(-i\nabla+\mathsf{A})\psi\|^2_{\sL^2(U)}+C_{\eps}\|\psi\|^2_{\sL^2(U)})\,.
\end{align}
We infer that there exists $\Tilde C>0$ such that for all $\eps>0$ there exists $\Tilde C_{\eps}>0$ such that for all $\psi\in\sH^1_{\mathsf{A}}(U)$,
\[\mathfrak{Q}_{\mathsf{G}, 1, \mathsf{NL}}(\psi)\geq (1-\Tilde C\eps)\|(-i\nabla+\mathsf{A})\psi\|^2_{\sL^2(U)}+\mathsf{V}\|\psi\|^2_{\sL^2(U)}+\mathsf{c}\|\psi\|^2_{\partial U}-\Tilde C_{\eps}\|\psi\|^2_{\sL^2(U)}\,,\]
and, by using Lemma~\ref{lem.trace-eps} and again the diamagnetic inequality, it follows that
\[\mathfrak{Q}_{\mathsf{G}, 1, \mathsf{NL}}(\psi)\geq (1-\hat C\eps)\left\{\|(-i\nabla+\mathsf{A})\psi\|^2_{\sL^2(U)}+\mathsf{V}\|\psi\|^2_{\sL^2(U)}\right\}-\hat C_{\eps}\|\psi\|^2_{\sL^2(U)}\,.\]
This proves that $\mathfrak{Q}_{\mathsf{G}, 1, \mathsf{NL}}$ is bounded from below on $\sH^1_{\mathsf{A}}(U)$ and thus defines a self-adjoint operator $\mathfrak{L}_{\mathsf{G}, 1,\mathsf{NL}}$. The function $\psi_{0}$ belongs to the domain of this operator and satisfies $\mathfrak{L}_{\mathsf{G}, 1,\mathsf{NL}}\psi_{0}=0$. Now, the exponential decay will be established if we prove that
\begin{multline}\label{eq.min-infty}
\exists C>0\,, \forall \eps>0\,,\exists R>0\,,\forall \psi\in\sH^1_{\mathsf{A}}(U)\,,\\
\supp(\psi)\subset\complement\mathcal{B}(0,R)\Longrightarrow\mathfrak{Q}_{\mathsf{G}, 1,\mathsf{NL}}(\psi)\geq (1-C\eps)\mathfrak{Q}_{\mathsf{G}, 1}(\psi)-C\eps\|\psi\|^2_{\sL^2(U)}\,.
\end{multline}
Indeed, this implies that, for all $\psi\in\sH^1_{\mathsf{A}}(U)$ with $\supp(\psi)\subset\complement\mathcal{B}(0,R)$,
\[\mathfrak{Q}_{\mathsf{G}, 1,\mathsf{NL}}(\psi)\geq (1-C'\eps)\lambda(\mathsf{G}, 1, 2)\|\psi\|^2_{\sL^2(U)}\,.\]
From this we can deduce, by using Persson's theorem (see \cite{Persson60}), that we have $\inf\mathsf{sp}_{\mathsf{ess}}(\mathfrak{L}_{\mathsf{G}, 1,\mathsf{NL}})\geq\lambda(\mathsf{G}, 1, 2)>0$ and the conclusion follows by using the Agmon-Persson estimates (see \cite{Agmon85} or for instance \cite[Proof of Proposition 10.10]{Ray14}). From the proof of these estimates, we may even find an $\alpha>0$ common to all the minimizers $\psi$.

Therefore, let us explain where \eqref{eq.min-infty} comes from. For that purpose, we come back to \eqref{eq.controlNL} with $\eps=1$ and we notice that, for all $R\geq 0$ and $\psi\in\sH^1_{\mathsf{A}}(U)$ such that $\supp(\psi)\subset\complement\mathcal{B}(0,R)$,
\[\int_{U}|V_{\mathsf{NL}}||\psi|^2\dx\x\leq C \|V_{\mathsf{NL}}\|_{\sL^{\frac{p}{p-2}}(\complement\mathcal{B}(0,R))}(\|(-i\nabla+\mathsf{A})\psi\|^2_{\sL^2(U)}+C_{1}\|\psi\|^2_{\sL^2(U)})\,.\]
Since $V_{\mathsf{NL}}\in \sL^{\frac{p}{p-2}}(U)$, $\|V_{\mathsf{NL}}\|_{\sL^{\frac{p}{p-2}}(\complement\mathcal{B}(0,R)))}$ goes to zero when $R$ goes to infinity. Then, from \eqref{eq.QNL} (and again Lemma \ref{lem.trace-eps} with the diamagnetic inequality to control the boundary term), we deduce \eqref{eq.min-infty}.
\end{proof}

\subsection{A sufficient condition for boundary attraction}\label{sec.perturb-Rob}
This section is devoted to the proof of the last part of Theorem~\ref{theo.0}.
\begin{proposition}\label{prop:ccprinciple}
If $\mathsf{G}$ is a homogeneous geometry with $U$ being a half-space and with fixed $(\mathsf{V},\A)$, then there exists $\mathsf{c}_{0}>0$, such that for $\mathsf{c}\in(-\infty,\mathsf{c}_{0})$, we have
\[\lambda(\mathsf{G},1,p)<\lambda(\underline{\mathsf{G}},1,p)\,.\]
\end{proposition}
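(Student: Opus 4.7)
The plan is to exhibit a single magnetic translate of the whole-space minimizer of $\lambda:=\lambda(\underline{\mathsf{G}},1,p)$ that already beats $\lambda$ when $\mathsf{c}=0$, and then to absorb a small positive Robin coefficient by continuity. Under the implicit positivity hypothesis $\lambda(\underline{\mathsf{G}},1,2)=\mathrm{Tr}^+\,\mathsf{B}+\mathsf{V}>0$, Theorem~\ref{theo.0}\eqref{theo0i} supplies a minimizer $\psi_*\in\sH^1_{\A}(\R^d)$ with $\|\psi_*\|_{\sL^p}=1$ attaining $\lambda$, and Proposition~\ref{prop.exp} gives its exponential decay. For $h\in\R$ I introduce
\[
\mathcal{E}^+(h)=\int_{\{y_d\ge h\}}\!\bigl[|(-i\nabla+\mathsf{A})\psi_*|^2+\mathsf{V}|\psi_*|^2\bigr]d\y,\;\mathcal{P}^+(h)=\int_{\{y_d\ge h\}}\!|\psi_*|^p\,d\y,\;m(h)=\int_{\R^{d-1}}\!|\psi_*(y',h)|^2\,dy'.
\]

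The core ingredient is the slicing identity
\[
\mathcal{E}^+(h)\;=\;\lambda\,\mathcal{P}^+(h)\;-\;\tfrac12 m'(h)\qquad(h\in\R).
\]
I would derive it by multiplying the Euler--Lagrange equation $(-i\nabla+\mathsf{A})^2\psi_*+\mathsf{V}\psi_*=\lambda|\psi_*|^{p-2}\psi_*$ by $\bar\psi_*$, integrating over $\{y_d\ge h\}$, and invoking the magnetic Green identity
\[
\int_U\bar\psi\,D^2\psi\,d\y\;=\;\int_U|D\psi|^2\,d\y\;-\;i\int_{\partial U}\bar\psi\,(D\psi\cdot\n)\,d\sigma,\qquad D=-i\nabla+\mathsf{A}.
\]
Upon taking real parts, the a priori complex boundary contribution at $\{y_d=h\}$ collapses to $\tfrac12 m'(h)$ via the product rule $2\,\Re[\bar\psi_*\partial_d\psi_*]=\partial_d|\psi_*|^2$. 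This careful extraction of the real part of the boundary term is what I expect to be the main technical obstacle.

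Next I pick $h_0\in\R$ to be a maximizer of $m$; it exists because $m$ is continuous, nonnegative, nontrivial, and vanishes at $\pm\infty$ by the exponential decay. Maximality forces $m'(h_0)=0$ and $m(h_0)>0$, and continuity of $\psi_*$ together with $m(h_0)>0$ gives $\mathcal{P}^+(h_0)\in(0,1)$, since $|\psi_*|$ is strictly positive on a full neighborhood of some point of $\{y_d=h_0\}$ and hence has positive $\sL^p$-mass on both sides. The identity at $h_0$ now reads $\mathcal{E}^+(h_0)=\lambda\,\mathcal{P}^+(h_0)$, so the magnetic translate $\phi:=\tau_{(0,\ldots,0,-h_0)}\psi_*$ restricted to $\R^d_+$, whose bulk energy is preserved by Remark~\ref{rem.magnetictranslation}, satisfies
\[
\frac{\int_{\R^d_+}\!\bigl[|(-i\nabla+\mathsf{A})\phi|^2+\mathsf{V}|\phi|^2\bigr]d\x}{\|\phi\|_{\sL^p(\R^d_+)}^2}\;=\;\frac{\mathcal{E}^+(h_0)}{\mathcal{P}^+(h_0)^{2/p}}\;=\;\lambda\,\mathcal{P}^+(h_0)^{1-2/p}\;<\;\lambda,
\]
where the strict inequality comes from $\mathcal{P}^+(h_0)<1$ together with $1-2/p>0$ (because $p>2$).

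With $\|\phi\|_{\sL^2(\partial\R^d_+)}^2=m(h_0)>0$ and the explicit gap $\delta:=\lambda-\lambda\mathcal{P}^+(h_0)^{1-2/p}>0$ at hand, setting $\mathsf{c}_0:=\delta\,\mathcal{P}^+(h_0)^{2/p}/m(h_0)>0$ yields, for every $\mathsf{c}<\mathsf{c}_0$, the bound $\mathsf{c}\,m(h_0)<\delta\,\mathcal{P}^+(h_0)^{2/p}$ and therefore
\[
\mathfrak{Q}_{\mathsf{G},1}(\phi)\;=\;\mathcal{E}^+(h_0)+\mathsf{c}\,m(h_0)\;<\;\lambda\,\mathcal{P}^+(h_0)^{2/p}\;=\;\lambda\,\|\phi\|_{\sL^p(\R^d_+)}^2.
\]
This proves $\lambda(\mathsf{G},1,p)<\lambda(\underline{\mathsf{G}},1,p)$ on $(-\infty,\mathsf{c}_0)$. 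Beyond the magnetic Green identity the only remaining ingredients are the elementary $x^{1-2/p}<1$ on $(0,1)$ and the magnetic translation invariance already recorded in Remark~\ref{rem.magnetictranslation}.
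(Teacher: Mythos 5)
Your argument is correct, but it takes a genuinely different route from the paper. The paper first treats $\mathsf{c}=0$ by translating and reflecting the whole-space minimizer $u_0$ so that its $\sL^p$-mass splits evenly between the two half-spaces while the cheaper half carries at most half the energy, which yields the quantitative bound $\lambda(\underline{\mathsf{G}},1,p)\geq 2^{1-2/p}\lambda(\mathsf{G},1,p)$; it then handles $\mathsf{c}\neq 0$ by observing that $\mathsf{c}\mapsto\lambda(\mathsf{G},1,p)$ is concave and non-decreasing (an infimum of affine non-decreasing functions of $\mathsf{c}$) and bounded above by $\lambda(\underline{\mathsf{G}},1,p)$ for every $\mathsf{c}$ (via translated and cut-off copies of $u_0$), which in fact delivers the strict inequality for \emph{all} real $\mathsf{c}$, not only below a threshold. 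You instead exploit the Euler--Lagrange equation through a slicing identity to locate a hyperplane $\{y_d=h_0\}$ (a critical point of the trace mass $m$) where the restricted Rayleigh quotient equals $\lambda\,\mathcal{P}^+(h_0)^{1-2/p}<\lambda$, and then absorb the Robin term with an explicit $\mathsf{c}_0$. I verified your identity $\mathcal{E}^+(h)=\lambda\,\mathcal{P}^+(h)-\tfrac12 m'(h)$: with outward normal $-e_d$ for $\{y_d>h\}$ the boundary term is $\int\bar\psi_*\partial_d\psi_*+i\int A_d|\psi_*|^2$, whose real part is indeed $\tfrac12 m'(h)$, so the computation is sound. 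Two steps should be made explicit to be fully rigorous: (i) invoking $m'(h_0)=0$ at a maximizer presupposes that $m$ is differentiable there, which requires elliptic regularity for $\psi_*$ beyond $\sH^1_{\A}$ (the mere absolute continuity of $m$, which follows from $\partial_d|\psi_*|^2\in\sL^1$, only gives an a.e.\ derivative); and (ii) the Green identity over the unbounded set $\{y_d\geq h\}$ must be justified by a cut-off argument using the exponential decay of Proposition~\ref{prop.exp}. In exchange for this regularity theory your method produces an explicit admissible threshold $\mathsf{c}_0$ in terms of the minimizer, whereas the paper's argument is more elementary at $\mathsf{c}=0$ and, through concavity, covers every real $\mathsf{c}$.
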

\begin{proof}
  Let us first prove the inequality in the case  $\mathsf{G} = (\R^d_+,\Id,V,\A ,0)$. Let $u_0\in\sH^1_{\A}(\R^d)$ be a minimizer of $\lambda(\underline{\mathsf{G}},1,p)$ given by point \eqref{theo0i} Theorem~\ref{theo.0} such that $\|u_0\|_{\sL^p(\R^d)} = 1$. Up to a translation in the $e_d = (0,\dots,0,1)$ direction and up to the symmetry $\x \mapsto-\x$, we can assume that 
  \[
    \|u_0\|_{\sL^p(\R^d_+)}^p = \|u_0\|_{\sL^p(\R^d_-)}^p = \frac{1}{2}
  \]
  and
  \[
  \int_{\R^d_+}|(-i\nabla+\mathsf{A})u_0|^2+\mathsf{V}|u_0|^2\dx\x\leq \int_{\R^d_-}|(-i\nabla+\mathsf{A})u_0|^2+\mathsf{V}|u_0|^2\dx\x\,.
  \]
  Then, we get
  \begin{eqnarray*}
    &\lambda(\underline{\mathsf{G}},1,p) &= \int_{\R^d}|(-i\nabla+\mathsf{A})u_0|^2+\mathsf{V}|u_0|^2\dx\x\\
    &&\geq 2\int_{\R^d_+}|(-i\nabla+\mathsf{A})u_0|^2+\mathsf{V}|u_0|^2\dx\x\\
    &&\geq 2\|u_0\|_{\sL^p(\R^d_+)}^{2}\lambda(\mathsf{G},1,p) = 2^{1-2/p}\lambda(\mathsf{G},1,p)\,.
  \end{eqnarray*}
  Thus, we are left with the case $c\ne0$. Let us remark that 
  \[
    c \mapsto \lambda(\mathsf{G},1,p)
  \]
   is a non-negative, concave and non-decreasing function of $c$ since it is a infimum of non-negative, affine and non-decreasing functions. Hence, we get the result provided that $\lambda(\mathsf{G},1,p)\leq \lambda(\underline{\mathsf{G}},1,p)$ for any $c>0$. To do so, we build a sequence made of magnetic translated of $u_0$ in the $e_d$ direction that are multiplied by a cut-off function so as to vanish on $\R^{d-1}\times \{0\}$. 
\end{proof}

\subsection{Study of a one-dimensional model}\label{sec.toy}
In the last section, we have seen that the existence of the minimizers in presence of a Robin boundary is ensured if the Robin parameter is not too large. Actually, in dimension one (without electric or magnetic field), we can prove that, above a certain intensity, the minimizers do not exist (as we will see in the following lines).

We are interested in the map $c\mapsto\lambda((\R_+,\Id,1,0,c),1,p)$. The goal is to get a good understanding of the phenomena occurring, studying the simplest case when the concentration-compactness principle is not needed. Indeed, we look for a real-valued solution of the following ordinary differential equation problem:
\begin{equation}
\left\{
\begin{aligned}\label{eq:Robin1}
  -u'' + u & = \lambda |u|^{p-2}u && \text{ in } \R_+,\\
  u'(0) & = cu(0),\\
 \|u\|_{\sL^p(\R_+)} & = 1,
\end{aligned}\right.
\end{equation}
where $\lambda =  \lambda((\R_+,\Id,1,0,c),1,p)$, $u\in \sH^1(\R_+,\R)$, $p>2$ and $c\in\R$.
We get the following result.
\begin{proposition}\label{prop:Robin}
  The system \eqref{eq:Robin1} has a unique solution for $c\in(-1,1)$ and no solution for $|c|\geq 1$.
  Moreover, we have
  \begin{enumerate}[\rm (i)]
    \item $\lambda((\R_+,\Id,1,0,c),1,2)>0$ if and only if $c>-1$,
    \item $\lambda((\R_+,\Id,1,0,c),1,p)<\lambda((\R,\Id,1,0,0),1,p)$ for all $c\in(-1,1)$,
    \item $\lambda((\R_+,\Id,1,0,c),1,p)=\lambda((\R,\Id,1,0,0),1,p)$ for all $c\geq1$.
  \end{enumerate}
\end{proposition}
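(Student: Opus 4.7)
For brevity abbreviate $\mathsf{G}_c:=(\R_+,\Id,1,0,c)$, so that $\mathfrak{Q}_{\mathsf{G}_c,1}(\psi)=\int_0^\infty(|\psi'|^2+|\psi|^2)\,dx+c|\psi(0)|^2$, and let $\lambda_0:=\lambda((\R,\Id,1,0,0),1,p)$. The proof is a hands-on one-dimensional analysis based on the completely integrable structure of the Euler--Lagrange equation, on an explicit shifted-ground-state competitor, and on a sharp half-line extension trick; the abstract existence Theorem~\ref{theo.0}~\eqref{theo0ii} is only invoked to produce minimizers for $c\in(-1,1)$.

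\emph{Step 1 (conservation law and non-existence for $|c|\ge 1$).} Any $u\in\sH^1(\R_+,\R)$ solving the Euler--Lagrange equation in \eqref{eq:Robin1} decays together with its derivative at $+\infty$, so multiplying the equation by $u'$ and integrating yields
\[
(u')^2\;=\;u^2\;-\;\frac{2\lambda}{p}\,|u|^p \quad \text{on }\R_+.
\]
Evaluating at $x=0$ and using $u'(0)=cu(0)$ produces the algebraic identity
\[
(1-c^2)\,u(0)^2\;=\;\frac{2\lambda}{p}\,|u(0)|^p.
\]
From Step~3 one has $\lambda=\lambda(\mathsf{G}_c,1,p)=\lambda_0>0$ for $c\ge 1$, which forces $u(0)=0$; by Cauchy--Lipschitz uniqueness the whole Cauchy problem gives $u\equiv 0$, contradicting $\|u\|_{\sL^p}=1$. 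The analogous contradiction at $c\le-1$ is obtained from Step~3~(i) below, which gives $\lambda(\mathsf{G}_c,1,2)\le 0$ and, after insertion into the same algebraic identity, rules out any $\sL^p$-normalized solution.

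\emph{Step 2 (existence, uniqueness, and (ii) for $c\in(-1,1)$).} Let $u_0\in\sH^1(\R)$ be the positive, even, decreasing ground state from Theorem~\ref{theo.0}~\eqref{theo0i}, solving $-u_0''+u_0=\lambda_0 u_0^{p-1}$ and normalized by $\|u_0\|_{\sL^p(\R)}=1$. The conservation law at $\lambda=\lambda_0>0$ yields the asymptotic $u_0(x)\sim e^{-|x|}$, whence $a\mapsto u_0'(a)/u_0(a)$ is smooth, strictly decreasing, odd, and bijects $\R$ onto $(-1,1)$. For each $c\in(-1,1)$ pick the unique $a_c$ with $u_0'(a_c)/u_0(a_c)=c$ and take $v_c:=u_0(\cdot+a_c)\in\sH^1(\R_+)$ as a competitor. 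Multiplying $-u_0''+u_0=\lambda_0 u_0^{p-1}$ by $u_0$ and integrating over $(a_c,+\infty)$ gives the Pohozaev-type identity
\[
\mathfrak{Q}_{\mathsf{G}_c,1}(v_c)\;=\;\lambda_0\int_{a_c}^{+\infty}|u_0|^p\,dy,
\]
whence
\[
\lambda(\mathsf{G}_c,1,p)\;\le\;\lambda_0\Bigl(\int_{a_c}^{+\infty}|u_0|^p\,dy\Bigr)^{(p-2)/p}\;<\;\lambda_0,
\]
which proves (ii). Combined with (i) this triggers Theorem~\ref{theo.0}~\eqref{theo0ii} to yield a minimizer. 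Uniqueness follows from Step~1: the Cauchy data of any minimizer are algebraically determined by $(\lambda(\mathsf{G}_c,1,p),c)$ and by the $\sL^p$-normalization, so all minimizers coincide with $\pm v_c$.

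\emph{Step 3 (assertions (i) and (iii)).} For (i), the bound $|u(0)|^2\le 2\|u\|_{\sL^2}\|u'\|_{\sL^2}$ combined with Young's inequality with weight $|c|^{-1}$ gives $\mathfrak{Q}_{\mathsf{G}_c,1}(u)\ge(1-c^2)\|u\|_{\sL^2}^2$ for $c<0$, sharp via the test function $e^{-|c|x}$; for $c\ge 0$ one has $\mathfrak{Q}_{\mathsf{G}_c,1}(u)\ge\|u\|_{\sH^1}^2\ge\|u\|_{\sL^2}^2$. Hence $\lambda(\mathsf{G}_c,1,2)=\min(1,1-c^2)$, positive iff $c>-1$. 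For (iii), the upper bound $\lambda(\mathsf{G}_c,1,p)\le\lambda_0$ is Remark~\ref{rem.magnetictranslation}. For the matching lower bound when $c\ge 1$, given $\psi\in\sH^1(\R_+)$ set $\tilde\psi(x):=\psi(0)e^x$ for $x\le 0$ and $\tilde\psi(x):=\psi(x)$ for $x\ge 0$, so that $\tilde\psi\in\sH^1(\R)$ and a direct computation gives
\[
\int_{\R}\bigl(|\tilde\psi'|^2+|\tilde\psi|^2\bigr)\,dx\;=\;\int_{\R_+}\bigl(|\psi'|^2+|\psi|^2\bigr)\,dx\;+\;|\psi(0)|^2\;\le\;\mathfrak{Q}_{\mathsf{G}_c,1}(\psi),
\]
while $\|\tilde\psi\|_{\sL^p(\R)}\ge\|\psi\|_{\sL^p(\R_+)}$; comparison with the full-line infimum $\lambda_0$ thus gives $\mathfrak{Q}_{\mathsf{G}_c,1}(\psi)\ge\lambda_0\|\psi\|_{\sL^p(\R_+)}^2$, hence $\lambda(\mathsf{G}_c,1,p)\ge\lambda_0$. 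The delicate point is to locate the threshold at exactly $c=1$ rather than at the implicit $c_0>0$ of Proposition~\ref{prop:ccprinciple}: this is precisely what the sharp extension $\tilde\psi(x)=\psi(0)e^x$ above (since $e^x$ saturates the Hardy identity at the critical value) and the shifted-soliton construction of Step~2 (whose boundary slope $u_0'(a)/u_0(a)$ reaches $\pm 1$ only in the limit $a\to\mp\infty$) together achieve.
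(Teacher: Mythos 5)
Your Steps~2 and~3 are sound and, for the most part, close in spirit to the paper's own argument: the first integral you use in Step~1 is exactly the Hamiltonian $H$ of the paper, and your shifted soliton $v_c=u_0(\cdot+a_c)$ is the same object as the trajectory $(u_c,v_c)$ of Lemmas~\ref{lem:Robin1}--\ref{lem:Robin4}. Two points are genuinely different. For (ii) you replace the monotonicity argument of Lemma~\ref{lem:Robin3} by a direct competitor computation, which is shorter. For (iii) your extension $\tilde\psi(x)=\psi(0)e^{x}$ on $x\le 0$ gives a clean, quantitative lower bound $\lambda(\mathsf{G}_c,1,p)\ge\lambda_0$ for $c\ge1$, whereas the paper argues by contradiction (if the inequality \eqref{Int-Bord} held, Theorem~\ref{theo.0}~\eqref{theo0ii} would produce a minimizer, hence a nontrivial zero-energy trajectory with boundary slope $c\ge 1$, which Lemma~\ref{lem:Robin1} forbids); your route is arguably preferable. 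Two small imprecisions: the solution of \eqref{eq:Robin1} is not $\pm v_c$ but $\pm\mu v_c$ with $\mu^{p-2}=\lambda_0/\lambda(\mathsf{G}_c,1,p)$, since $v_c$ solves the equation with $\lambda_0$ and is not $\sL^p$-normalized; and the positivity, evenness and monotonicity of $u_0$ are not part of Theorem~\ref{theo.0}~\eqref{theo0i} and need the one-dimensional ODE analysis (or the explicit soliton formula) to be justified.

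The genuine gap is the non-existence claim for $c\le-1$ in Step~1. The identity $(1-c^2)u(0)^2=\tfrac{2\lambda}{p}|u(0)|^p$ only produces a contradiction when the two sides have opposite strict signs; for $c\ge1$ this works because $\lambda=\lambda_0>0$, but for $c\le-1$ one has $1-c^2\le0$ \emph{and} $\lambda(\mathsf{G}_c,1,p)\le0$ (test with $e^{cx}$: the Rayleigh quotient equals $\tfrac{1-c^2}{2|c|}(p|c|)^{2/p}\le0$), so both sides are non-positive and the identity is solvable: for $c<-1$ it determines a nonzero value $|u(0)|^{p-2}=p(1-c^2)/(2\lambda)>0$, and for $c=-1$ it reads $0=0$. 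Worse, at $c=-1$ one actually has $\lambda(\mathsf{G}_{-1},1,p)=0$, because $\mathfrak{Q}_{\mathsf{G}_{-1},1}(\psi)=\|\psi\|_{\sH^1(\R_+)}^2-|\psi(0)|^2\ge0$ with equality exactly on multiples of $e^{-x}$; the normalized $e^{-x}$ then satisfies all three lines of \eqref{eq:Robin1} with $\lambda=0$, so the "analogous contradiction" you invoke is simply not there. Excluding (nontrivial, nonlinear) solutions for $c\le-1$ requires a separate discussion: the paper first rescales to the $\lambda=1$ system \eqref{eq:Robin2} (implicitly assuming $\lambda>0$) and then uses that $\tfrac{c^2-1}{2}|u^0|^2+\tfrac{|u^0|^p}{p}=0$ forces $u^0=0$ when $|c|\ge1$. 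To be complete you must either restrict your Step~1 to $c\ge1$ and treat $c\le-1$ separately (handling the degenerate value $\lambda\le0$), or make explicit that the statement is understood for the rescaled problem with $\lambda>0$.
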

We split our study into two steps: 
\begin{enumerate}[i.]
  \item Study of the Cauchy problem \eqref{eq:Robin1} with $\lambda=1$ and $c\in \R$ fixed 
  but without the restriction $\|u\|_{\sL^p(\R_+)} = 1$.
  \item Study of the dependence of the solutions of \eqref{eq:Robin1} 
  to describe the behavior of the function $c\mapsto\lambda((\R_+,\Id,1,0,c),1,p)$.
\end{enumerate}
\subsubsection{First step}
Let us remark that up to the change of unknown $u\leadsto u\lambda^{\frac{1}{p-2}}$, 
the system \eqref{eq:Robin1} without the constraint on the integral is equivalent to
\begin{equation}
\left\{
\begin{split}\label{eq:Robin2}
  &u' = v\\
  &v'  = u -  |u|^{p-2}u  \\
  &v(0) = cu(0)\,.
\end{split}
\right.
\end{equation}
Obviously, we are only interested in nontrivial solutions of \eqref{eq:Robin2}  
so that without loss of generality, we can assume that $u(0)>0$.
This is a Hamiltonian system 
\begin{equation}
\left\{
\begin{split}
  &u' = \frac{\partial H}{\partial v} (u,v) \\
  &v' = - \frac{\partial H}{\partial u} (u,v)
\end{split}
\right.
\end{equation}
where the Hamiltonian \(H\) is defined by 
\[
  H(u,v)  := \frac{|v|^2 -|u|^2}{2} + \frac{|u|^p}{p}\,.
\]
As a consequence, we immediately get that $\partial_r H(u(r),v(r)) = 0$.
Let us notice that $H$ is coercive: 
\[
\lim_{\|(u,v)\|\to +\infty} H(u,v) = +\infty\,,
\]
so that all solutions of \eqref{eq:Robin2} are global. 
Moreover, since we are looking for a solution $u\in \sH^1(\R_+,\R)$ \emph{i.e.}\ such that 
\[
  \int_{\R_+}(|u|^2+|v|^2)\dx r<+\infty\,,
\]
the initial condition has to satisfy $H(u(0),v(0)) = 0$.  This follows from the continuity of $H$ since for any $E\ne 0$, there exists $R>0$ such that
\[
  H^{-1}(\{E\})\cap B(0,R) = \emptyset
\]
where $B(0,R)$ is the open euclidian ball of radius $R$ centered in $(0,0)$.

\begin{figure}\label{fig.1}
\includegraphics[width=0.6\textwidth]{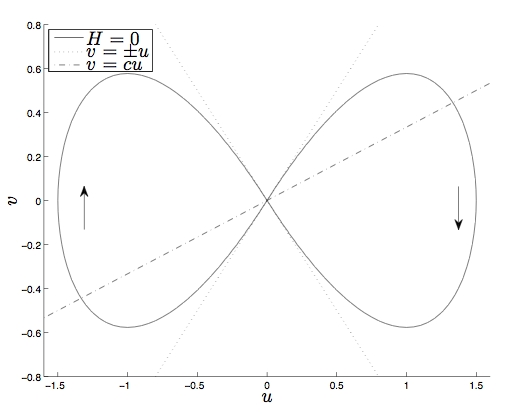}
\caption{Zero set of $H$}
\end{figure}

Thus, we obtain the following lemma.
\begin{lemma}
  \label{lem:Robin1}
  We have: 
  \begin{enumerate}[\rm (i)]
    \item for $|c|\geq1$, there is no nontrivial solution of \eqref{eq:Robin2},
    \item for $|c|<1$, there is a unique $u_{c}^0>0$ such that $H(u_{c}^0,cu_{c}^0)=0$, the associated solution $(u_c,v_c)$ tends to $(0,0)$ at infinity and satisfies $u_c(r)>0$ for all $r\geq0$. 
  \end{enumerate}
\end{lemma}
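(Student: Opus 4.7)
My plan is to exploit that the system \eqref{eq:Robin2} is Hamiltonian with conserved energy $H(u,v)=\tfrac{v^2-u^2}{2}+\tfrac{|u|^p}{p}$, so the problem reduces to inspecting a single level set of $H$ in the phase plane against the ray $\{v=cu,\ u>0\}$. The argument proceeds in three moves: first show that any admissible solution must sit on $\{H=0\}$, then solve the intersection with the ray algebraically, and finally read off the long-time behaviour from the geometry of the zero level set.

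For the first move, I would justify carefully the claim sketched just before the lemma statement that $H(u(0),v(0))=0$ for any $\sH^1$-solution. Since $u\in\sH^1(\R_+)$, a standard Cauchy argument on $u^2(r)=u^2(0)+2\int_0^r u\, u'$ yields $u(r)\to 0$ at infinity; then $u$ is bounded, so $v'=u-|u|^{p-2}u$ is bounded, and from $v\in\sL^2(\R_+)$ with bounded derivative one deduces $v(r)\to 0$. Passing to the limit $r\to+\infty$ in the conservation identity $H(u(r),v(r))=H(u(0),v(0))$ gives the equality.

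For the second move, evaluating $H$ on $\{v=cu,\ u>0\}$ gives
\[
H(u,cu) = u^2\Bigl(\tfrac{c^2-1}{2}+\tfrac{u^{p-2}}{p}\Bigr),
\]
so $H(u,cu)=0$ reduces to $u^{p-2}=\tfrac{p(1-c^2)}{2}$. This has no positive solution when $|c|\geq 1$, which together with the first move proves (i); and it has exactly one positive solution $u_c^0=\bigl(\tfrac{p(1-c^2)}{2}\bigr)^{1/(p-2)}$ when $|c|<1$. For the dynamical part of (ii), the third move is a phase-portrait analysis of $\{H=0\}\cap\{u\geq 0\}$: this is a homoclinic loop through the origin, parametrized by $v=\pm u\sqrt{1-\tfrac{2}{p}u^{p-2}}$ for $0\leq u\leq u_*:=(p/2)^{1/(p-2)}$. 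The origin is a hyperbolic saddle of the linearization $u''=u$ (eigenvalues $\pm 1$), whose unstable and stable manifolds coincide with the upper and lower branches of the loop. The orbit starting at $(u_c^0,cu_c^0)$ is trapped on this loop; after possibly passing through the apex $(u_*,0)$ in finite time when $c\geq 0$, it joins the stable branch and converges to $(0,0)$ at exponential rate, giving both $\sH^1$-integrability and positivity of $u_c$ on $[0,+\infty)$, since the loop meets $\{u=0\}$ only at the saddle, which is reached only asymptotically.

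The main technical subtlety is in the first move — deducing $v(r)\to 0$ rather than assuming it — and in making the phase-portrait picture rigorous in the last move. For the latter I would appeal to the hyperbolic stable-manifold theorem at $(0,0)$ to identify the two branches of the loop and the exponential decay rate, and use the explicit time-quadrature $r-r_0=\int_{u_0}^{u(r)}\tfrac{du}{v(u)}$ on each monotone piece to check that the apex is traversed in finite parameter time while the saddle is only approached as $r\to+\infty$.
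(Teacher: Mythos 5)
Your proposal is correct and follows essentially the same route as the paper: conservation of the Hamiltonian forces an $\sH^1$-solution onto the level set $\{H=0\}$, the intersection with the ray $v=cu$ is solved algebraically, and uniqueness in the Cauchy--Lipschitz theorem identifies the orbit with the homoclinic loop through the saddle at the origin. The only cosmetic difference is in justifying $H(u(0),v(0))=0$ (you pass to the limit $u,v\to 0$, while the paper notes that nonzero level sets avoid a neighborhood of the origin, contradicting $\sL^2$-integrability); your extra detail on the quadrature and the stable/unstable branches simply fleshes out what the paper leaves to the reader.
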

\begin{proof}
  The equation
  \[
    0 = H(u^0,cu^0) = \frac{c^2-1}{2}|u^0|^2 + \frac{|u^0|^p}{p}
  \]
  has no nontrivial solution for $|c|\geq1$ and has a unique solution $u_c^0>0$ for $|c|<1$. Moreover, $(0,0)$ is the unique critical point of $H^{-1}(\{0\})$ and the conclusion follows from the Cauchy-Lipschitz theorem.  
\end{proof}
Let us study now the decay at infinity of $u_c$ and $v_c$ for $|c|<1$.
\begin{lemma}
  \label{lem:Robin2}
  Let $c\in(-1,1)$. Then, $(u_c,v_c)$ decays exponentially at infinity and $u_c$ belongs to $\sH^1(\R_+,\R)$. 
\end{lemma}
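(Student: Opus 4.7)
The plan is to exploit the conservation of the Hamiltonian $H$ together with the stable-manifold structure at the saddle point $(0,0)$. From the identity $H(u_c, v_c) \equiv 0$, valid along any orbit of \eqref{eq:Robin2}, and the positivity of $u_c$ given by Lemma~\ref{lem:Robin1}, we extract
\[
v_c(r)^2 = u_c(r)^2 - \frac{2}{p}u_c(r)^p = u_c(r)^2\Bigl(1 - \frac{2}{p}u_c(r)^{p-2}\Bigr)
\]
for every $r \geq 0$. Since $(u_c(r), v_c(r)) \to (0,0)$ as $r \to +\infty$, there exists $R_0 > 0$ such that $\tfrac{2}{p} u_c(r)^{p-2} \leq \tfrac{1}{2}$ on $[R_0, +\infty)$, whence $|v_c(r)| \geq \tfrac{1}{\sqrt{2}}\, u_c(r)$ there.

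The key remaining step is to identify the sign of $v_c$ on the tail. The linearization of \eqref{eq:Robin2} at the origin has eigenvalues $\pm 1$, with stable eigenspace $\R(1,-1)$. Since $(u_c, v_c)$ is a non-stationary orbit of the flow converging to the saddle $(0,0)$, the stable-manifold theorem forces it to lie, for $r$ large enough, on the one-dimensional stable manifold, which is tangent at the origin to $\R(1,-1)$; in particular $v_c(r)/u_c(r) \to -1$. After possibly enlarging $R_0$, we thus have $v_c(r) < 0$ on $[R_0, +\infty)$, and the identity above can be rewritten as the differential inequality
\[
u_c'(r) = v_c(r) = -u_c(r)\sqrt{1 - \tfrac{2}{p}u_c(r)^{p-2}} \leq -\tfrac{1}{\sqrt{2}}\, u_c(r), \qquad r \geq R_0.
\]

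Gronwall's inequality then yields $0 < u_c(r) \leq u_c(R_0)\, e^{-(r - R_0)/\sqrt{2}}$ for all $r \geq R_0$, so $u_c$ decays exponentially on $\R_+$, and the pointwise bound $|v_c| \leq u_c$ transfers this decay to $v_c$. Consequently $u_c$ and $u_c' = v_c$ belong to $\sL^2(\R_+, \R)$, hence $u_c \in \sH^1(\R_+, \R)$.

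The main obstacle is the sign argument for $v_c$ on the tail. If invoking the stable-manifold theorem is to be avoided, one can argue directly: on the zero-energy loop $\{H = 0\} \cap \{u > 0\}$ the only points with $v = 0$ are $(0,0)$ and $(M, 0)$ where $M = (p/2)^{1/(p-2)}$, and at the latter the vector field satisfies $v_c' = M - M^{p-1} = M(1 - \tfrac{p}{2}) < 0$ since $p > 2$. This precludes $v_c$ from crossing zero from below, so after the (at most unique) passage of the orbit through the maximum at $u = M$, the coordinate $v_c$ stays strictly negative, as required.
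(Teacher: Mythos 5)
Your argument is correct. It reaches the same intermediate goal as the paper --- namely that for large $r$ one has $v_c\leq -\delta\, u_c$ for some $\delta>0$, from which exponential decay follows by integrating a first-order differential inequality --- but by a different mechanism. The paper obtains both the sign and the size of $v_c/u_c$ at once from the strict monotonicity of $r\mapsto\arctan(v_c/u_c)$, whose derivative equals $-(1-2/p)u_c^p/(u_c^2+v_c^2)<0$, so that the angle decreases from $\arctan(c)$ to $-\pi/4$; this monotonicity is not a throwaway tool, since it is reused immediately afterwards to define the hitting times $T_{c,c'}$ in Lemmas~\ref{lem:Robin3} and~\ref{lem:Robin4}. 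You instead read the magnitude $|v_c|=u_c\sqrt{1-\tfrac{2}{p}u_c^{p-2}}$ directly off the conservation law $H\equiv 0$ and settle the sign separately, either via the stable manifold of the hyperbolic saddle at the origin or via the phase-plane observation that on $\{H=0\}\cap\{u>0\}$ the only interior zero of $v$ is at $u=M=(p/2)^{1/(p-2)}$, where $v'=M(1-p/2)<0$, so $v_c$ can only cross zero downwards. Both routes are legitimate; yours is self-contained and makes the asymptotic slope $v_c/u_c\to-1$ explicit, while the paper's is marginally more economical in the context of the section because the angle function does double duty. One small expository point: in your fallback sign argument you should note explicitly that $v_c$ must eventually become negative (if $v_c\geq 0$ for all $r$ then $u_c$ is nondecreasing and cannot tend to $0$), after which the no-upward-crossing property keeps it negative; as written this step is only implicit.
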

\begin{proof}
 
  By Lemma~\ref{lem:Robin1}, $\arctan(v_c/u_c)$ is well-defined on $\R_+$ and satisfies
  \[
    \begin{split}
    \frac{d}{dr} \arctan (v_c/u_c) &= \frac{{v_c}'u_c-{u_c}'v_c}{u_c^2+v_c^2} = \frac{-(1-2/p)u_c^p}{u_c^2+v_c^2}< 0
    \end{split}
  \]
  so that $ \arctan (v_c/u_c)$ decays from $\arctan(c)$ to $-\pi/4$.  Hence, there exists $T\geq0$ such that $-u_c(r)\leq v_c(r)\leq \frac{-u_c(r)}{2}$ for all $r\geq T$ and
  \[
    \frac{d}{dr}|u_c|^p = p|u_c|^{p-2}u_c{u_c}' = p|u_c|^{p-2}u_cv_c\leq -\frac{p}{2}|u_c|^{p}.
  \]
  Thus, we obtain that $|u_c(r)|^p\leq C\exp(-\frac{p}{2}r)$ for all $r\geq 0$ and the conclusion follows
\end{proof}
\subsubsection{Second step}
In the following lemma, we study the dependence of $\sL^p$-norm of $u_c$ from $c$.
\begin{lemma}
  \label{lem:Robin3}
  We have that
  \[
    c\in(-1,1)\mapsto \|u_c\|^p_{\sL^p(\R_+)}\in \R_+
  \]
  is strictly increasing.
\end{lemma}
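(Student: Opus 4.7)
The plan is to turn the integral $\|u_c\|_{\sL^p(\R_+)}^p = \int_{0}^{\infty} u_c^p \dx r$ into an explicit function of $c$ by using the Hamiltonian conservation $H(u_c,v_c)\equiv 0$ as a change of variable, and then to read off strict monotonicity. On this zero set one has, for $u>0$, the relation $v^2 = u^2\bigl(1 - \tfrac{2}{p}u^{p-2}\bigr)$, so $u$ is automatically bounded above by $u_{\max} := (p/2)^{1/(p-2)}$, and the initial datum $(u_c^0, c u_c^0)\in H^{-1}(\{0\})$ satisfies $(u_c^0)^{p-2} = \tfrac{p}{2}(1-c^2)$, which is a strictly decreasing function of $|c|$ on $[0,1)$ with value $u_{\max}$ at $c=0$.

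First I would describe the shape of the trajectory on $[0,\infty)$ using the strict monotonicity of $\arctan(v_c/u_c)$ established in the proof of Lemma~\ref{lem:Robin2}: if $c\in(-1,0]$ then $v_c<0$ for $r>0$ and $u_c$ strictly decreases from $u_c^0$ to $0$; if $c\in(0,1)$ there is a unique $r_c^*>0$ with $u_c(r_c^*)=u_{\max}$ and $v_c(r_c^*)=0$, and $u_c$ strictly increases on $[0,r_c^*]$ before strictly decreasing to $0$ on $[r_c^*,\infty)$. On each monotone branch I can change variable via $|\dx r| = \dx u / |v_c| = \dx u /\bigl(u\sqrt{1-\tfrac{2}{p}u^{p-2}}\bigr)$. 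Setting
\[
F(t) := \int_{0}^{t} \frac{u^{p-1}}{\sqrt{1 - \tfrac{2}{p} u^{p-2}}}\,\dx u,\qquad t\in[0,u_{\max}],
\]
which is convergent since the singularity at $u_{\max}$ is of order $\tfrac{1}{2}$, a direct computation gives
\[
\|u_c\|_{\sL^p(\R_+)}^p = \begin{cases} F(u_c^0) & \text{if } c\in(-1,0], \\ 2F(u_{\max}) - F(u_c^0) & \text{if } c\in[0,1), \end{cases}
\]
the two expressions matching at $c=0$ since $u_0^0 = u_{\max}$.

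Finally, $F$ is strictly increasing on $[0,u_{\max}]$ because $F'>0$ on $(0,u_{\max})$, while the map $c\mapsto u_c^0$ is strictly increasing on $(-1,0]$ and strictly decreasing on $[0,1)$. Plugging these monotonicities into the two cases above yields strict monotonicity of $c\mapsto \|u_c\|_{\sL^p(\R_+)}^p$ on each sub-interval, and these glue continuously at $c=0$ with common value $F(u_{\max})$. The only genuinely delicate point is the $c>0$ case, where the trajectory crosses the turning point $(u_{\max},0)$ and the change of variables must be split at $r_c^*$; once the integrability of $F$ near $u_{\max}$ is checked, everything else is a routine phase-plane computation.
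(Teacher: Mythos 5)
Your proof is correct, but it takes a genuinely different route from the paper's. The paper exploits the fact that all the orbits $(u_c,v_c)$ live on the same zero level set of $H$ and are therefore time-translates of one another: for $-1<c'<c<1$ it identifies the first time $T_{c,c'}>0$ at which $v_c/u_c$ reaches $c'$, invokes uniqueness in Cauchy--Lipschitz to get $(u_c,v_c)(\cdot+T_{c,c'})=(u_{c'},v_{c'})$, and concludes that $\|u_c\|^p_{\sL^p(\R_+)}-\|u_{c'}\|^p_{\sL^p(\R_+)}=\int_0^{T_{c,c'}}|u_c|^p\dx r>0$. That argument needs no case distinction, no explicit formula, and no discussion of the turning point. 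Your approach instead integrates out the time variable on the level set $v^2=u^2\bigl(1-\tfrac{2}{p}u^{p-2}\bigr)$ and produces the closed-form expressions $F(u_c^0)$ for $c\le 0$ and $2F(u_{\max})-F(u_c^0)$ for $c\ge 0$; the price is the split at the turning point $r_c^*$ and the (easy, since the zero of $1-\tfrac{2}{p}u^{p-2}$ at $u_{\max}$ is simple) convergence check for $F$ near $u_{\max}$, all of which you handle correctly. What your version buys is more than monotonicity: the explicit formula immediately yields the limits used in Lemma~\ref{lem:Robin4}, namely $\|u_c\|^p_{\sL^p(\R_+)}\to 0$ as $c\to-1$ (since $u_c^0\to 0$) and $\|u_c\|^p_{\sL^p(\R_+)}\to 2F(u_{\max})=\|u\|^p_{\sL^p(\R)}$ as $c\to 1$, the latter by the evenness of the full-line soliton about its maximum. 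So your computation could serve as a common proof of Lemmas~\ref{lem:Robin3} and~\ref{lem:Robin4} at once.
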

\begin{proof}
  Let $-1<c'<c<1$. Let us write
  \[
    T_{c,c'} := \inf\{r>0, \arctan(v_c(r)/u_c(r))<\arctan(c')\}\,.
  \]
  The quantity $T_{c,c'}>0$ is well-defined from the proof of Lemma~\ref{lem:Robin2}. 
  By the uniqueness of the solution in the Cauchy-Lipschitz theorem, we get that 
  \[
    (u_c,v_c)( \cdot + T_{c,c'})=(u_{c'},v_{c'})(\cdot)
  \]
  since $H(u_c,v_c)(T_{c,c'})=0$ and $v_c(T_{c,c'}) = c'u_c(T_{c,c'})$. This ensures that
  \[
    \begin{split}
      \|u_c\|^p_{\sL^p(\R_+)} &= \int_0^{T_{c,c'}}|u_c|^p\dx r + \int_{T_{c,c'}}^{+\infty}|u_c|^p \dx r> \int_0^{+\infty}|u_{c'}|^p \dx r\,.
    \end{split}
    \qedhere
  \]
\end{proof}
Let us introduce for $c\in(-1,1)$
\[
  \lambda_c := \frac{\|u_c\|_{\sH^1(\R_+)^2}+c|u_c(0)|^2}{\|u_c\|^2_{\sL^p(\R_+)}} = \|u_c\|^{p-2}_{\sL^p(\R_+)}\geq \lambda((\R_+,\Id,1,0,c),1,p).
\]
Lemma~\ref{lem:Robin3} ensures that $c\mapsto \lambda_c$ is strictly increasing.
It remains to study the limiting behavior of $\lambda_c$ at $\pm 1$.
\begin{lemma}
  \label{lem:Robin4}
  We have 
  \[
    \lim_{c\to1} \lambda_c = \lambda((\R,\Id,1,0,0),1,p)\quad\text{ and }\quad \lim_{c\to -1}\lambda_c = 0\,,
  \]
  so that, for all $c\in(-1,1)$,
  \[
    \lambda_c = \lambda((\R_+,\Id,1,0,c),1,p)\,.
  \]
\end{lemma}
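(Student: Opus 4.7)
The plan is to realize each $u_c$ as a translate of the ground state on $\R$ and then read off the limits through a change of variable. Let $w\in\sH^1(\R,\R)$ be the unique positive, even solution of $-w''+w=|w|^{p-2}w$: existence and uniqueness follow from the same phase-plane argument as in Lemma~\ref{lem:Robin1}, applied on $\R$ in place of $\R_+$, together with the fact that the homoclinic curve $\{H=0\}\setminus\{(0,0)\}$ carries a single orbit up to time translation. Hence, for every $c\in(-1,1)$ there exists a unique $T_c\in\R$ with $u_c(\cdot)=w(\cdot-T_c)$, and the Robin condition becomes $c=-w'(T_c)/w(T_c)$ (using that $w$ is even).

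\textbf{Limits.} Linearizing $-w''+w=|w|^{p-2}w$ on $\{|w|\leq\eps\}$, where the nonlinearity is a higher-order perturbation of $-w''+w=0$, yields $w(T)\sim A\,e^{-|T|}$ and $w'(T)\sim -\mathrm{sgn}(T)\,A\,e^{-|T|}$ as $|T|\to+\infty$ for some $A>0$; thus $-w'(T)/w(T)\to\pm 1$ as $T\to\pm\infty$. Combined with the strict monotonicity of $c\mapsto\|u_c\|_{\sL^p(\R_+)}^p$ from Lemma~\ref{lem:Robin3}, this forces $T_c\to+\infty$ as $c\to 1^-$ and $T_c\to-\infty$ as $c\to-1^+$. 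The change of variable $s=r-T_c$ gives
\[
  \|u_c\|_{\sL^p(\R_+)}^p=\int_{-T_c}^{+\infty}|w(s)|^p\dx s\,,
\]
whose limit is $\|w\|_{\sL^p(\R)}^p$ as $c\to 1^-$ and $0$ as $c\to -1^+$. Testing $-w''+w=|w|^{p-2}w$ against $w$ yields $\|w\|_{\sH^1(\R)}^2=\|w\|_{\sL^p(\R)}^p$, and a standard rescaling argument then identifies $\lambda((\R,\Id,1,0,0),1,p)=\|w\|_{\sL^p(\R)}^{p-2}$. Since $\lambda_c=\|u_c\|_{\sL^p(\R_+)}^{p-2}$, the two announced limits follow.

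\textbf{Identification of $\lambda_c$ with the variational constant.} Plugging $u_c$ into the Rayleigh quotient defining $\lambda((\R_+,\Id,1,0,c),1,p)$ and using the integration by parts
\[
  \mathfrak{Q}_{\mathsf{G},1}(u_c)=\int_{\R_+} u_c(-u_c''+u_c)\dx r+c\,u_c(0)^2=\|u_c\|_{\sL^p(\R_+)}^p
\]
gives $\lambda((\R_+,\Id,1,0,c),1,p)\leq\lambda_c$. Conversely, by the strict monotonicity of $c\mapsto\lambda_c$ and the limit just established, $\lambda_c<\lambda((\R,\Id,1,0,0),1,p)=\lambda(\underline{\mathsf G},1,p)$ for every $c\in(-1,1)$, so a fortiori $\lambda((\R_+,\Id,1,0,c),1,p)<\lambda(\underline{\mathsf G},1,p)$; Theorem~\ref{theo.0}~\eqref{theo0ii} then yields a minimizer. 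In the absence of magnetic field the modulus of a minimizer is still admissible and minimizing (since $||\psi|'|\leq|\psi'|$ almost everywhere and the boundary term depends only on $|\psi(0)|^2$), so we may take it real and non-negative; after $\sL^p$-normalization it solves \eqref{eq:Robin1} with multiplier $\lambda((\R_+,\Id,1,0,c),1,p)$, and the uniqueness in Lemma~\ref{lem:Robin1} identifies it as a rescaling of $u_c$, pinning the multiplier to $\|u_c\|_{\sL^p(\R_+)}^{p-2}=\lambda_c$.

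\textbf{Main obstacle.} The delicate point is the asymptotic $w(T)\sim A\,e^{-|T|}$ as $|T|\to+\infty$, which converts the geometric statement $T_c\to\pm\infty$ into the analytic statement $c\to\pm 1$. This is handled by a linearization on $\{|w|\leq\eps\}$ combined with the conservation law $H(w,w')=0$ and a Gr\"onwall-type estimate; the remainder of the argument is bookkeeping.
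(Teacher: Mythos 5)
Your argument is correct and follows the paper's own route: realize $u_c$ as a translate $w(\cdot-T_c)$ of the whole-line soliton, deduce $T_c\to\pm\infty$ as $c\to\pm1$ from the asymptotics of $-w'/w$ (which, incidentally, follows even more directly from $H(w,w')=0$, i.e. $(w'/w)^2=1-\tfrac{2}{p}w^{p-2}\to1$, without any linearization), pass to the limit in $\|u_c\|^p_{\sL^p(\R_+)}=\int_{-T_c}^{+\infty}|w|^p\dx s$, and conclude via Theorem~\ref{theo.0}~\eqref{theo0ii} together with uniqueness for the Cauchy problem — indeed you spell out this last identification step in more detail than the paper does. The only blemish is the displayed integration by parts: using $u_c'(0)=cu_c(0)$ one gets $\mathfrak{Q}_{\mathsf{G},1}(u_c)=\int_{\R_+}u_c(-u_c''+u_c)\dx r$ with no additional boundary term, so your middle expression double-counts $c\,u_c(0)^2$; the outer identity $\mathfrak{Q}_{\mathsf{G},1}(u_c)=\|u_c\|^p_{\sL^p(\R_+)}$ is nevertheless correct and the conclusion is unaffected.
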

\begin{proof}
  Let us denote by
  \[
    T_c := T_{c,0} = \inf\{r>0 : \arctan(v_c(r)/u_c(r))<0\}\,.
  \]
  By the Cauchy-Lipschitz theorem, we get that
  \[
    (u_c,v_c)(r + T_c) = (u,u')(r)\text{ for all } r\in[-T_c,+\infty)
  \]
  where $(u,v)$ is the solution of
  \[
    -u'' +u = |u|^{p-2}u\text{ on } \R 
  \]
  such that $u(0) = u_0^0$ and $ u'(0) = 0$ with $u_0^0$ given by Lemma~\ref{lem:Robin1}. Let us remark that Theorem~\ref{theo.0} ensures that
  \[
    \lambda((\R,\Id,1,0,0),1,p) = \|u\|^{p-2}_{\sL^p(\R)}\,.
  \]
  We also have that $\lim_{c\to+1}T_c = +\infty$ and 
  \[
    \lim_{c\to1}\lambda_c^{\frac{p}{p-2}} =  \lim_{c\to1} \int_{-T_c}^\infty|u|^p\dx r = \int_{-\infty}^\infty|u|^p\dx r = \lambda((\R,\Id,1,0,0),1,p)^{\frac{p}{p-2}}
  \]
  since $(0,0)$ is an equilibrium of \eqref{eq:Robin2}. The same ideas give that
  \[
    \lim_{c\to-1} \lambda_c = 0\,.
  \]
  Finally, Theorem~\ref{theo.0} and Lemma~\ref{lem:Robin3} allow us to conclude.
\end{proof} 
Let us end this section with the proof of Proposition~\ref{prop:Robin}.

\begin{proof}[Proof of Proposition~\ref{prop:Robin}]
The first point follows from a standard spectral analysis (for negative $c$ there exists only one eigenvalue below the essential spectrum that is $1-c^2$). The second point follows from Lemmas~\ref{lem:Robin3} and~\ref{lem:Robin4}. The third point is an immediate consequence of the first point, Lemma~\ref{lem:Robin1} and Theorem~\ref{theo.0}.
\end{proof}
\section{Upper bounds of $\lambda(\mathcal{G}, h, p)$}\label{sec.3}
The upper bound in Theorem~\ref{theo.1} will be proved by inserting appropriate test functions in the Sobolev quotient: either functions localized inside the domain, or functions localized near the boundary. Of course, the case related to the boundary is slightly more delicate and involves a local straightening of the boundary. Anyway, after an appropriate rescaling, we will locally see the appearence of the concentration function $\x\mapsto \lambda(\mathcal{G}_{\x}, 1, p)$. Thus, we will have to select a minimal point of this function. The existence of such a point $\x_{0}\in\overline{\Omega}$ is ensured by Proposition~\ref{prop.semicont} which we will prove in Section~\ref{sec.5}.

Depending on whether $\x_{0}\in\Omega$ or $\x_{0}\in\partial\Omega$, this section is divided into two parts and devoted to the proof of Propositions~\ref{prop:interior} and~\ref{prop:upperbound}.

\subsection{Interior estimate}
Here is the estimate related to the interior contribution.
\begin{proposition}\label{prop:interior}
Let $\x_{0}\in\Omega$. There exists $h_{0}>0, C>0$ such that, for all $h\in(0,h_{0})$,
\[\lambda(\mathcal{G}, h, p)\leq h^{\frac{d}{2}-\frac{d}{p}}h\left(\lambda(\mathcal{G}_{\x_{0}},1,p)+Ch^{\frac{1}{2}}\right)\,. \]
\end{proposition}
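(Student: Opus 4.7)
The plan is to test the Rayleigh quotient against a semiclassical blow-up of a ground state of the homogeneous model $\mathcal{G}_{\x_0}$ centered at $\x_0$. Since $\x_0\in\Omega$, a trial function supported in a small ball around $\x_0$ kills the Robin contribution and leaves only local Taylor corrections to the electric and magnetic potentials.

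The first step is to extract the model ground state. Assumption~\ref{a.0}(\ref{a.0i}) ensures $\lambda(\mathcal{G}_{\x_0},1,2)=\Tr^+\,\B(\x_0)+V(\x_0)>0$, so Theorem~\ref{theo.0}(\ref{theo0i}) produces a minimizer $u\in\sH^1_{\mathcal{A}^\mathsf{L}_{\x_0}}(\R^d)$ normalized by $\|u\|_{\sL^p(\R^d)}=1$, satisfying $\mathfrak{Q}_{\mathcal{G}_{\x_0},1}(u)=\lambda(\mathcal{G}_{\x_0},1,p)$. Proposition~\ref{prop.exp} then furnishes $\alpha>0$ such that $e^{\alpha|\cdot|}u\in\sL^2(\R^d)$ and $\mathfrak{Q}_{\mathcal{G}_{\x_0},1}(e^{\alpha|\cdot|}u)<+\infty$; in particular every polynomial moment $\int_{\R^d}|\y|^k\bigl(|u|^2+|(-i\nabla+\mathcal{A}^\mathsf{L}_{\x_0})u|^2\bigr)\dx\y$ is finite, and this is what will absorb the remainder terms in the expansion below.

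Next I would fix the gauge and build the trial function. Choose a convex neighborhood $\mathcal{U}\ni\x_0$ with $\overline{\mathcal{U}}\subset\Omega$; since $\dx(\A-\A^\mathsf{L}_{\x_0})=0$ on $\mathcal{U}$, the gauge invariance \eqref{eq.gauge} lets me assume $\A=\A^\mathsf{L}_{\x_0}$ there. Taylor's formula together with \eqref{eq.jaugex0}--\eqref{eq.linear-approx} then yields
\[
  \A(\x)=\mathcal{A}^\mathsf{L}_{\x_0}(\x)+O(|\x-\x_0|^2),\qquad V(\x)=V(\x_0)+O(|\x-\x_0|)\qquad\text{on }\mathcal{U}.
\]
Pick $\delta>0$ with $\mathcal{B}(\x_0,2\delta)\subset\mathcal{U}$ and $\chi\in\mathcal{C}^\infty_c(\mathcal{B}(\x_0,2\delta))$ equal to $1$ on $\mathcal{B}(\x_0,\delta)$, and set
\[
  \psi_h(\x)=h^{-d/4}\,u\bigl((\x-\x_0)/h^{1/2}\bigr)\,\chi(\x).
\]

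Finally I would expand the quotient through the dilation $\y=(\x-\x_0)/h^{1/2}$, under which $\dx\x=h^{d/2}\dx\y$ and $\mathcal{A}^\mathsf{L}_{\x_0}(\x)$ scales linearly by $h^{1/2}$. A direct computation then produces a leading kinetic contribution $h\int|(-i\nabla+\mathcal{A}^\mathsf{L}_{\x_0})u|^2\dx\y$ modulo two kinds of errors: a quadratic gauge remainder from $\A-\mathcal{A}^\mathsf{L}_{\x_0}=O(h|\y|^2)$, controlled by the weighted moments of $u$ and giving an $O(h^{3/2})$ contribution via Cauchy--Schwarz; and a cutoff term coming from $\nabla\chi$, supported where $|\y|\geq\delta h^{-1/2}$, hence exponentially small by the decay of $u$. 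The electric piece yields $hV(\x_0)\|u\|^2_{\sL^2(\R^d)}+O(h^{3/2})$ in the same way, and the Robin term vanishes because $\supp\psi_h\subset\Omega$. Combined with $\|\psi_h\|^2_{\sL^p(\Omega)}=h^{d/p-d/2}\bigl(1+O(e^{-ch^{-1/2}})\bigr)$, this gives
\[
  \frac{\mathfrak{Q}_{\mathsf{G},h}(\psi_h)}{\|\psi_h\|^2_{\sL^p(\Omega)}}\leq h^{d/2-d/p}\,h\bigl(\lambda(\mathcal{G}_{\x_0},1,p)+Ch^{1/2}\bigr),
\]
which is the announced upper bound. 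The main obstacle is precisely this quantitative expansion of the quadratic form: one has to match the quadratic order of $\A-\mathcal{A}^\mathsf{L}_{\x_0}$ against the semiclassical scale $|\x-\x_0|\sim h^{1/2}$ and invoke the Agmon-type decay from Proposition~\ref{prop.exp} both to keep every weighted integral $O(1)$ and to make the cutoff contribution exponentially small.
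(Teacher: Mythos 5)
Your construction coincides with the paper's own proof: the same rescaled model minimizer, cut off near $\x_{0}$ and gauge-corrected so that $\A-\nabla\varphi_{0}-\mathcal{A}^{\mathsf{L}}_{\x_{0}}=O(|\x-\x_{0}|^2)$, with the Agmon decay of Proposition~\ref{prop.exp} absorbing both the cutoff commutator and the weighted moments exactly as in \eqref{eq.reste-com} and \eqref{eq.x4}. The only cosmetic differences are the normalization constant ($h^{-d/4}$ versus the paper's $h^{-d/(2p)}$) and the use of Cauchy--Schwarz in place of the Peter--Paul inequality with $\eta=h^{1/2}$; both yield the same $O(h^{1/2})$ relative error.
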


\begin{proof}
Let us consider a smooth cutoff function $0\leq \chi_{0}\leq 1$ being $1$ in $\mathcal{B}(\x_{0},\eps_{0})$ and being zero away from $\mathcal{B}\left(\x_{0},2\eps_{0}\right)$. It follows from \eqref{eq.Lorentz}, \eqref{eq.jaugex0} and \eqref{eq.linear-approx} that there exists a smooth real function $\varphi_{0}$ such that, on the support of $\chi_{0}$,
\begin{equation}\label{eq.Taylor}
|\A(\x)-\nabla\varphi_{0}(\x)-\mathcal{A}^{\mathsf{L}}_{\x_{0}}(\x)|\leq C|\x-\x_{0}|^2\,.
\end{equation}
Let us consider an $\sL^p$-normalized minimizer $\Psi_{\x_{0}}$ associated with $\lambda(\mathcal{G}_{\x_{0}},1,p)$ and let $\Tilde{\Psi}_{\x_{0}}(\cdot)=\Psi_{\x_{0}}(\x_{0}+\cdot)$. We let
\[\psi_{h}(\x)=h^{-\frac{d}{2p}}\chi_{0}(\x)e^{-i\frac{\varphi_{0}(\x)}{h}}\Tilde{\Psi}_{\x_{0}}(h^{-\frac{1}{2}}(\x-\x_{0}))\,.\]
We notice that
\begin{align*}
\|\psi_{h}\|^2_{\sL^p(\Omega)}&=h^{-\frac{d}{p}}\left(\int_{\R^d} \left|\chi_{0}(\x)\right|^p\left|\Tilde{\Psi}_{\x_{0}}(h^{-\frac{1}{2}}(\x-\x_{0}))\right|^p\dx\x\right)^{\frac{2}{p}}\\
&=\left(1-\int_{\complement\mathcal{B}(\zb,\eps_{0} h^{-\frac{1}{2}})}\left(1-  \left|\chi_{0}(\x_{0}+h^{\frac{1}{2}}\y)\right|^p\right)\left|\Tilde{\Psi}_{\x_{0}}(\y)\right|^p\dx\y\right)^{\frac{2}{p}}
\end{align*}
and thus, thanks to Proposition~\ref{prop.exp},
\begin{equation}\label{eq.L2-psih}
\|\psi_{h}\|^2_{\sL^p(\Omega)}=1+\mathcal{O}(h^\infty)\,.
\end{equation}
Then, we estimate $\mathfrak{Q}_{\mathcal{G},h}(\psi_{h})$. Thanks to the localization formula, we have
\begin{multline}\label{eq.ubQ}
\int_{\Omega} |(-ih\nabla+\A)\psi_{h}|^2\dx\x\\
=h^{-\frac{d}{p}}\!\!\!\int_{\Omega} \left|\chi_{0}(\x)\right|^2\left|(-ih\nabla+\A)e^{-i\frac{\varphi_{0}(\x)}{h}}\Tilde{\Psi}_{\x_{0}}\left(h^{-\frac{1}{2}}(\x-\x_{0})\right)\right|^2\!\!\!\dx\x\\
+h^{2-\frac{d}{p}}\int_{\R^d}|\nabla\chi_{0}(\x)|^2\left|\Tilde{\Psi}_{\x_{0}}\left(h^{-\frac{1}{2}}(\x-\x_{0})\right)\right|^2\dx\x\,.
\end{multline}
By support considerations, and by using again Proposition~\ref{prop.exp}, we get
\begin{equation}\label{eq.reste-com}
\int_{\R^d}|\nabla\chi_{0}(\x)|^2\left|\Tilde{\Psi}_{\x_{0}}\left(h^{-\frac{1}{2}}(\x-\x_{0})\right)\right|^2\dx\x=\mathcal{O}(h^{\infty})\,.
\end{equation}
We have
\begin{multline}\label{eq.ubQ'}
\int_{\Omega} \left|\chi_{0}(\x)\right|^2\left|(-ih\nabla+\A)e^{-i\frac{\varphi_{0}(\x)}{h}}\Tilde{\Psi}_{\x_{0}}\left(h^{-\frac{1}{2}}(\x-\x_{0})\right)\right|^2\dx\x\\
\leq \int_{\mathcal{B}(\x_{0},2\eps_{0})} \left|(-ih\nabla+\A-\nabla\varphi_{0})\Tilde{\Psi}_{\x_{0}}\left(h^{-\frac{1}{2}}(\x-\x_{0})\right)\right|^2\dx\x\,.
\end{multline}
With \eqref{eq.Taylor}, we get, for all $\eta>0$,
\begin{multline}\label{eq.ubQ''}
\int_{\mathcal{B}(\x_{0},2\eps_{0})} \left|(-ih\nabla+\A-\nabla\varphi_{0})\Tilde{\Psi}_{\x_{0}}\left(h^{-\frac{1}{2}}(\x-\x_{0})\right)\right|^2\dx\x\\
\leq (1+\eta)\int_{\R^d} \left|(-ih\nabla+\mathcal{A}^{\mathsf{L}}_{\x_{0}}(\x))\Tilde{\Psi}_{\x_{0}}\left(h^{-\frac{1}{2}}(\x-\x_{0})\right)\right|^2\dx\x\\
+C^2(1+\eta^{-1})\int_{\R^d}|\x-\x_{0}|^4 \left|\Tilde{\Psi}_{\x_{0}}\left(h^{-\frac{1}{2}}(\x-\x_{0})\right)\right|^2\dx\x\,.
\end{multline}
With the definitions of $\Tilde{\Psi}_{\x_{0}}$ and $\Psi_{\x_{0}}$ and Proposition~\ref{prop.exp}, we deduce
\begin{equation}\label{eq.x4}
\int_{\R^d}|\x-\x_{0}|^4 \left|\Tilde{\Psi}_{\x_{0}}\left(h^{-\frac{1}{2}}(\x-\x_{0})\right)\right|^2\dx\x\leq D h^{2}h^{\frac{d}{2}}\,.
\end{equation}
Similarly, we get
\begin{equation}\label{eq.apV}
\int_{\Omega} |V(\x)-V(\x_{0})||\psi_{h}(\x)|^2\dx\x\leq C h^{-\frac{d}{p}}h^{\frac{1}{2}}h^{\frac{d}{2}}\,.
\end{equation}
We choose $\eta=h^{\frac{1}{2}}$. By combining \eqref{eq.ubQ}, \eqref{eq.reste-com}, \eqref{eq.ubQ'}, \eqref{eq.ubQ''}, \eqref{eq.x4} and \eqref{eq.apV}, we infer from the definition of  $\Tilde{\Psi}_{\x_{0}}$ and $\Psi_{\x_{0}}$ that
\begin{equation}\label{eq.Q-psih}
\mathfrak{Q}_{\mathcal{G},h,p}(\psi_{h})\leq  h^{\frac{d}{2}-\frac{d}{p}}h\left(\lambda(\mathcal{G}_{\x_{0}},1,p)+Ch^{\frac{1}{2}}\right)\,.
\end{equation}
The conclusion follows by combining \eqref{eq.L2-psih} and \eqref{eq.Q-psih}.
\end{proof}

\subsection{Boundary estimate}

\subsubsection{The electro-magnetic Robin Laplacian near the boundary}\label{sec.coord}
Let us describe the geometry near the boundary of $\Omega$. Since $\partial\Omega$ is smooth, we may consider a covering $\left(\mathcal{B}(\mathsf{X}_{\ell},r)\right)_{1\leq\ell\leq N}$ of $\partial\Omega$ such that the following holds. For each $\ell\in\{1,\cdots, N\}$, there exists a smooth parametrization $\Phi_{\ell} : U_{\ell}\times(0,t_{\ell})\to\Omega\cap\mathcal{B}(\mathsf{X}_{\ell},r)$, where $U_{\ell}\subset\R^{d-1}$ is an open set with $0\in U_{\ell}$ and $\Phi_{\ell}(0,0)=\mathsf{X}_{\ell}$.

For $\x_{0}\in\partial\Omega$, we have
\[\mathsf{T}_{\x_{0}}(\partial\Omega)=\left(d\Phi_{\ell}\right)_{\y_{0}}(\R^{d-1}\times\{0\})\,,\qquad \x_{0}=\Phi_{\ell}(\y_{0})\,,\quad \y_{0}=(s_{0},0)\,.\]
We also recall that the metric induced by $\Phi_{\ell}$ is in the form
\[G_{\ell}=(d \Phi_{\ell})^{\mathsf{T}}\dx \Phi_{\ell}\,,\]
and we let 
\[|G_{\ell}|=\det G_{\ell},\quad g_{\ell}(s)=G_{\ell}(s,0)\,,\quad |g_{\ell}|=\det g_{\ell}\,.\]
Then, we shall discuss the expression of our Laplacian in these coordinates. If $\psi$ is supported in $\Omega\cap\mathcal{B}(\mathsf{X}_{\ell},r)$, we may use the change of variables $\x=\Phi_{\ell}(s,t)$ and we get
\[\|\psi\|^2_{\sL^p(\Omega)}=\left(\int_{U_{\ell}\times(0,t_{\ell})}|\Tilde\psi|^p|G_{\ell}|^{\frac{1}{2}}\dx s\dx t\right)^{\frac{2}{p}}\]
and
\begin{multline}\label{eq.bd-coord}
\mathfrak{Q}_{\mathcal{G},h}(\psi)=\int_{U_{\ell}\times(0,t_{\ell})}\left(\langle(-ih\nabla+\Tilde\A)\Tilde\psi, G^{-1}_{\ell}(-ih\nabla+\Tilde\A)\Tilde\psi  \rangle_{\C^d}+h\Tilde V|\Tilde\psi|^2\right) |G_{\ell}|^{\frac{1}{2}}\dx s\dx t\\
+h^{\frac{3}{2}}\int_{U_{\ell}\times\{t_{\ell}=0\}} \Tilde\gamma |\Tilde\psi|^2|g_{\ell}|^{\frac{1}{2}}\dx s\,,
\end{multline}
where 
\[\Tilde\A=\left(d \Phi_{\ell}\right)^{\mathsf{T}}\circ \A\circ \Phi_{\ell}\,,\qquad \Tilde V=V\circ\Phi_{\ell}\,,\qquad \Tilde\gamma=\gamma\circ\Phi_{\ell}\,,\qquad \Tilde\psi=\psi\circ\Phi_{\ell}\,.\]
Note that in terms one $1$-forms, the first equality means 
\[\Phi_{\ell}^*\left(\sum_{k=1}^d A_{k}\dx x_{k}\right)=\sum_{k=1}^d\Tilde A_{k}\dx s_{k}\,,\]
so that, with a slight abuse of notation, we may write $(\widetilde{\mathcal{G}},\Tilde\psi)=\Phi_{\ell}^*(\mathcal{G},\psi)$. Since the pull-back commutes with the exterior derivative, the magnetic matrix of $\Tilde\B$ is
\[M_{\Tilde\B}=\left(d \Phi_{\ell}\right)^{\mathsf{T}}M_{\B}\left(d \Phi_{\ell}\right)\,,\]
and we may easily deduce the following lemma.
\begin{lemma}\label{lem.pbg}
We let $\widetilde{\mathcal{G}}_{\y_{0}}=(\R^{d-1}\times\R_{+},\mathsf{G}_{\ell}(\y_{0}), V(\x_{0}), \widetilde{\mathcal{A}}_{\y_{0}}^{\mathsf{L}},\gamma(\x_{0}))$. There exists a smooth function $\phi$ on $\R^d$ such that:
\[\left(\x_{0}+(d\Phi_{\ell})_{\y_{0}}\right)^*\mathcal{G}_{\x_{0}}=\widetilde{\mathcal{G}}^\phi_{\y_{0}}\,.\]
\end{lemma}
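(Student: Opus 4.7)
The plan is to verify the identity component by component from the definition of the pull-back geometry. Set $L := (d\Phi_{\ell})_{\y_{0}}$, an invertible linear map on $\R^d$, and write $\Phi_{0} : \y \mapsto \x_{0}+L(\y-\y_{0})$ for the affine map linearizing $\Phi_{\ell}$ at $\y_{0}$; its differential is constant equal to $L$, and $\Phi_{0}(\y_{0})=\x_{0}$. I will then unwind $\Phi_{0}^{*}\mathcal{G}_{\x_{0}}$ slot by slot and match it against $\widetilde{\mathcal{G}}_{\y_{0}}$ up to a magnetic gauge.

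First, for the base set: the columns of $L$ indexed by $1,\dots,d-1$ span $\mathsf{T}_{\x_{0}}(\partial\Omega)$ while $L(0,\dots,0,1)=\partial_{t}\Phi_{\ell}(\y_{0})$ points into $\Omega$, hence lies on the same side of $\mathsf{T}_{\x_{0}}(\partial\Omega)$ as $\n(\x_{0})$; thus $\Phi_{0}(\R^{d-1}\times\R_{+})$ coincides with $\x_{0}+\mathsf{T}_{\x_{0}}(\partial\Omega)+\R_{+}\n(\x_{0})$. The Euclidean metric pulls back to $L^{\mathsf{T}}L=G_{\ell}(\y_{0})=\mathsf{G}_{\ell}(\y_{0})$ by the very definition of the induced metric on the chart, while $V(\x_{0})$ and $\gamma(\x_{0})$, being constants, are preserved under composition with $\Phi_{0}$.

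It remains to match the magnetic potentials. A direct substitution into \eqref{eq.linear-approx} yields
\[L^{\mathsf{T}}\bigl(\mathcal{A}^{\mathsf{L}}_{\x_{0}}\circ\Phi_{0}\bigr)(\y)=\tfrac{1}{2}\,L^{\mathsf{T}}\mathsf{B}(\x_{0})\,L\,(\y-\y_{0})\,,\]
whose exterior derivative is the constant $2$-form $L^{\mathsf{T}}\mathsf{B}(\x_{0})L$. By the naturality of the exterior derivative, this is precisely the magnetic matrix of $\widetilde{\mathcal{G}}_{\y_{0}}$ at $\y_{0}$, so both $L^{\mathsf{T}}(\mathcal{A}^{\mathsf{L}}_{\x_{0}}\circ\Phi_{0})$ and $\widetilde{\mathcal{A}}^{\mathsf{L}}_{\y_{0}}$ are smooth vector potentials on $\R^{d}$ for the same constant magnetic field. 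Since $\R^{d}$ is simply connected, the Poincar\'e lemma furnishes a smooth $\phi$ with
\[L^{\mathsf{T}}\bigl(\mathcal{A}^{\mathsf{L}}_{\x_{0}}\circ\Phi_{0}\bigr)=\widetilde{\mathcal{A}}^{\mathsf{L}}_{\y_{0}}+\nabla\phi\,,\]
which by the convention following \eqref{eq.gauge} is exactly the assertion $\Phi_{0}^{*}\mathcal{G}_{\x_{0}}=\widetilde{\mathcal{G}}^{\phi}_{\y_{0}}$.

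No real obstacle arises in this argument; the only slot requiring a moment of care is the base-set identification, which hinges on the inward-pointing character of $L(0,\dots,0,1)$. Everything else is a direct unwinding of definitions combined with the elementary observation that two linear primitives of the same constant $2$-form differ by the gradient of a smooth (in fact quadratic) function.
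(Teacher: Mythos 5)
Your argument is correct and follows exactly the route the paper intends: it states $M_{\Tilde{\B}}=(d\Phi_{\ell})^{\mathsf{T}}M_{\B}(d\Phi_{\ell})$ from the naturality of the exterior derivative and then declares the lemma "easily deduced", which is precisely the slot-by-slot unwinding (half-space, induced metric, constants, and a gauge function supplied by the Poincar\'e lemma for two linear potentials of the same constant field) that you carry out. Your explicit check that $(d\Phi_{\ell})_{\y_{0}}e_{d}$ points inward, so that the image of $\R^{d-1}\times\R_{+}$ is the correct half-space, is a detail the paper leaves implicit.
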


\subsubsection{Upper bound}
Here is now the estimate related to the boundary case.
\begin{proposition}\label{prop:upperbound}
Let $\x_{0}\in\partial\Omega$. There exists $h_{0}>0, C>0$ such that, for all $h\in(0,h_{0})$,
\[\lambda(\mathcal{G}, h, p)\leq h^{\frac{d}{2}-\frac{d}{p}}h\left(\lambda(\mathcal{G}_{\x_{0}},1,p)+Ch^{\frac{1}{2}}|\log h|\right)\,. \]
\end{proposition}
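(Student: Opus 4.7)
The strategy mirrors that of Proposition~\ref{prop:interior}, with three additional ingredients: straightening the boundary through a chart of Section~\ref{sec.coord}, tracking non-Euclidean metric corrections after pull-back, and coping with the possible absence of an actual minimizer for the half-space model $\lambda(\mathcal{G}_{\x_{0}},1,p)$, which is the source of the logarithmic factor.

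First, I would fix a local boundary chart $\Phi_{\ell}$ with $\Phi_{\ell}(\y_{0})=\x_{0}$, $\y_{0}=(s_{0},0)$. By Lemma~\ref{lem.pbg} combined with the gauge invariance~\eqref{eq.gauge}, the model defining $\lambda(\mathcal{G}_{\x_{0}},1,p)$ is equivalent, up to an explicit gauge factor $e^{i\phi/h}$, to the half-space geometry $\widetilde{\mathcal{G}}_{\y_{0}}=(\R^{d}_{+},\mathsf{G}_{\ell}(\y_{0}),V(\x_{0}),\widetilde{\mathcal{A}}^{\mathsf{L}}_{\y_{0}},\gamma(\x_{0}))$. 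Using density of $\mathcal{C}^{\infty}_{c}(\overline{\R^{d}_{+}})$ in the magnetic Sobolev space, together with the coercivity of Proposition~\ref{prop.b-H1}, I would select an $\sL^{p}$-normalized near-minimizer $\widetilde{\Psi}_{\eta}\in\mathcal{C}^{\infty}_{c}(\overline{\R^{d}_{+}})$ supported in a ball $B(0,R_{\eta})$, with Sobolev quotient at most $\lambda(\widetilde{\mathcal{G}}_{\y_{0}},1,p)+\eta$ and uniformly controlled $\sH^{1}_{\widetilde{\mathcal{A}}^{\mathsf{L}}_{\y_{0}}}$-norm.

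Next, following the interior template, I would pick a cut-off $\chi_{0}$ supported in the coordinate patch and a gauge function $\varphi_{0}$ satisfying $|\A(\x)-\nabla\varphi_{0}(\x)-\mathcal{A}^{\mathsf{L}}_{\x_{0}}(\x)|\leq C|\x-\x_{0}|^{2}$, and form the test function
\[
  \psi_{h}(\x)=h^{-\frac{d}{2p}}\chi_{0}(\x)\,e^{-i(\varphi_{0}-\phi)(\x)/h}\,\widetilde{\Psi}_{\eta}\bigl(h^{-1/2}(\Phi_{\ell}^{-1}(\x)-\y_{0})\bigr).
\]
After pulling back the Sobolev quotient via~\eqref{eq.bd-coord} and rescaling $\y=h^{-1/2}(\Phi_{\ell}^{-1}(\x)-\y_{0})$, the metric $G_{\ell}$, the Jacobians $|G_{\ell}|^{1/2}$ and $|g_{\ell}|^{1/2}$, the pulled-back potentials $V\circ\Phi_{\ell}$ and $\gamma\circ\Phi_{\ell}$, and the residual magnetic discrepancy absorbed by $\varphi_{0}$, all admit Taylor expansions about $\y_{0}$ yielding remainders of size $O(h^{1/2}|\y|)$ and $O(h|\y|^{2})$ on the support of $\widetilde{\Psi}_{\eta}$. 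The Robin contribution $h^{3/2}\int_{\partial U}\gamma|\psi_{h}|^{2}\,\dx\sigma$ scales consistently with the rest and produces the $\gamma(\x_{0})$ boundary term of the model up to $O(h^{1/2})$ corrections.

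The main difficulty is then the balance between the approximation parameter $\eta$ and the Taylor remainders, which scale like $h^{1/2}R_{\eta}$ against the bounded $\sH^{1}_{\widetilde{\mathcal{A}}^{\mathsf{L}}_{\y_{0}}}$-mass of $\widetilde{\Psi}_{\eta}$. When the strict inequality~\eqref{Int-Bord} holds, Theorem~\ref{theo.0}~\eqref{theo0ii} and Proposition~\ref{prop.exp} provide a true minimizer with exponential decay, so a fixed cut-off radius yields $O(h^{\infty})$ tails and the error reduces to $O(h^{1/2})$, exactly as in Proposition~\ref{prop:interior}. In the general case, choosing $R_{\eta}$ to grow like $|\log h|$ along a carefully extracted $\mathcal{C}^{\infty}_{c}$ minimizing sequence, and balancing the approximation error against the cut-off radius, produces the announced $O(h^{1/2}|\log h|)$ loss---the only place in the argument where the logarithm enters.
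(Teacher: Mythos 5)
Your overall architecture (boundary chart, pull-back via Lemma~\ref{lem.pbg}, gauge correction, rescaling, Taylor remainders) matches the paper, and your treatment of the case where the strict inequality \eqref{Int-Bord} holds is exactly the paper's: a true half-space minimizer from Theorem~\ref{theo.0}~\eqref{theo0ii} with the exponential decay of Proposition~\ref{prop.exp} gives $O(h^{\infty})$ tails and an overall $O(h^{1/2})$ error. The gap is in the remaining case. You propose to handle the possible non-attainment of $\lambda(\mathcal{G}_{\x_0},1,p)$ by taking a compactly supported near-minimizer $\widetilde\Psi_\eta$ of the \emph{half-space} problem with quotient $\leq\lambda+\eta$ and support radius $R_\eta\sim|\log h|$, and then ``balancing the approximation error against the cut-off radius.'' But for a generic minimizing sequence there is no quantitative relation between $\eta$ and $R_\eta$: the rate $\eta(R)$ at which the quotient approaches the infimum as the support is allowed to grow is completely uncontrolled (it could be $1/\log\log R$, say). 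Your construction therefore yields
\[
\lambda(\mathcal{G},h,p)\leq h^{1+\frac{d}{2}-\frac{d}{p}}\bigl(\lambda(\mathcal{G}_{\x_0},1,p)+\eta(R_\eta)+Ch^{1/2}R_\eta\bigr)\,,
\]
which gives only a qualitative $o(1)$ remainder, not the claimed $Ch^{1/2}|\log h|$.

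The missing idea is that the only case not covered by Theorem~\ref{theo.0}~\eqref{theo0ii} is the \emph{equality} case $\lambda(\mathcal{G}_{\x_0},1,p)=\lambda(\underline{\mathcal{G}}_{\x_0},1,p)$, and there one should approximate the \emph{whole-space} constant instead: its minimizer $\Psi_0$ always exists (Theorem~\ref{theo.0}~\eqref{theo0i}) and decays exponentially (Proposition~\ref{prop.exp}). Translating $\Psi_0$ to depth $2R_h$ inside the domain with $R_h=h^{1/2}|\log h|$ and cutting off at scale $R_h$ turns the exponential decay at scale $h^{1/2}$ into \emph{quantitative} tail estimates $\exp(-\alpha p\eps_0 R_h/h^{1/2})=h^{\alpha p\eps_0}$, which one tunes to $h^{1/2}$ by the choice of $\eps_0$; the Taylor remainder of the metric and potentials over the support of size $R_h$ then produces exactly the $O(h^{1/2}|\log h|)$ loss. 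Without replacing your abstract minimizing sequence by this explicit decaying profile, the quantitative bound of the proposition cannot be reached.
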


\begin{proof}
Let us recall that we always have
\[
  \lambda(\mathcal{G}_{\x_{0}},1,p)\leq \lambda(\underline{\mathcal{G}}_{\x_{0}},1,p).
\]
Let us consider first the case of the strict inequality as in Theorem~\ref{theo.0}.
Using the notations of Section~\ref{sec.coord}, we choose
\[\Tilde\psi_{h}(s, t)=h^{-\frac{d}{2p}}\chi_{0}(s,t)e^{i\frac{\varphi_{0}}{h}}\Tilde\Psi_{0}(h^{-1/2}(s-s_{0},t))\,,\]
where $\Tilde\Psi_{0}$ belongs to $\mathcal{S}(\R^{d-1}\times\overline{\R_{+}})$ and $ \chi_{0}$ is a smooth cutoff function such that $0\leq \chi_{0}\leq 1$, being $1$ in $\mathcal{B}(\y_{0},\eps_{0})$ and being zero away from $\mathcal{B}\left(\y_{0},2\eps_{0}\right)$. 
The parameter $\eps_0$ is such that $\supp \Tilde\psi_{h}\subset U_{\ell}\times(0,t_{\ell})$ and $\varphi_0$ satisfies \eqref{eq.Taylor}.
With the same kind of computations as previously (see Proposition~\ref{prop:interior}),  we get
\begin{multline*}
\mathfrak{Q}_{\mathcal{G},h}(\Tilde\psi_{h})\\
\leq hh^{\frac{d}{2}-\frac{d}{p}}\int_{\R^{d-1}\times\R_{+}}\left(\langle(-i\nabla+\Tilde{\mathcal{A}}_{\y_{0}}^{\mathsf{L}})\Tilde\Psi_{0},G_{\ell}(\y_{0})^{-1}(-i\nabla+\Tilde{\mathcal{A}}_{\y_{0}}^{\mathsf{L}})\Tilde\Psi_{0}  \rangle_{\C^d}+\Tilde V(\y_{0})|\Tilde\Psi_{0}|^2\right) \dx \y\\
+hh^{\frac{d}{2}-\frac{d}{p}}\int_{\R^{d-1}\times\{t=0\}} \Tilde\gamma (\y_{0}) |\Tilde\Psi_{0}|^2|g_{\ell}(s_{0})|^{\frac{1}{2}}\dx s+Ch^{\frac{3}{2}}h^{\frac{d}{2}-\frac{d}{p}}\,
\end{multline*}
and
\[
  \|\psi_h\|^p_{\sL^p(\Omega)}=1 + \mathcal{O}(h^{\frac{1}{2}})\,.
\]
Then, it remains to use the change of variable of Lemma~\ref{lem.pbg} and take 
\[\Tilde\Psi_{0}=\left(\x_{0}+(d\Phi_{\ell})_{\y_{0}}\right)^*\Psi_{0}\,,\]
where $\Psi_{0}$ is a minimizer associated with $\lambda(\mathcal{G}_{\x_{0}},1,p)$.

Let us now consider the case when
\[
  \lambda(\mathcal{G}_{\x_{0}},1,p)= \lambda(\underline{\mathcal{G}}_{\x_{0}},1,p)
\]
for which Theorem~\ref{theo.0} does not apply. Let us define $R_h = h^{1/2}|\log h|$. We choose
\[\Tilde\psi_{h}(s, t)=h^{-\frac{d}{2p}}\chi_{0}(R_h^{-1}(s-s_{0},t-2R_h))e^{i\frac{\varphi_{0}}{h}}\Tilde\Psi_{0}(h^{-1/2}(s-s_{0},t-2R_h))\,,\]
where $\Tilde\Psi_{0}=\left(\x_{0}+(d\Phi_{\ell})_{\y_{0}}\right)^*\Psi_{0}\,$
for $\Psi_{0}$ a minimizer associated with $\lambda(\underline{\mathcal{G}}_{\x_{0}},1,p)$, $ \chi_{0}$ is a smooth cutoff function such that $0\leq \chi_{0}\leq 1$, being $1$ in $\mathcal{B}(0,\eps_{0})$ and being zero away from $\mathcal{B}\left(0,2\eps_{0}\right)$. 

We get 
\begin{align*}
\|\psi_h\|^p_{\sL^p(\Omega)} &= \int_{U_{\ell}\times(0,t_{\ell})}|\Tilde\psi|^p|G_{\ell}|^{\frac{1}{2}}\dx s\dx t \\
  &\geq \int_{\mathcal B(0,\eps_0R_hh^{-{1/2}})}|\Tilde\psi_0|^p|G_{\ell}(s_0+\Tilde s h^{1/2},2R_h + \Tilde t h^{1/2} )|^{\frac{1}{2}}\dx \Tilde s\dx \Tilde t\\
  &\geq \int_{\mathcal B(0,\eps_0R_hh^{-{1/2}})}|\Tilde\psi_0|^p|G_{\ell}(s_0,0)|^{\frac{1}{2}}\dx \Tilde s\dx \Tilde t - CR_h\\
  &\geq 1- CR_h -\int_{B(0,\eps_0R_hh^{-{1/2}})^c}|G_{\ell}(s_0,0)|^{\frac{1}{2}}|\Tilde\psi_0|^p\dx \Tilde s\dx \Tilde t\\
  &\geq 1- C(R_h - \exp(-\alpha p\eps_0R_h/h^{\frac{1}{2}})),
\end{align*}
where the last inequality follows from Proposition~\ref{prop.exp}. Hence, by choosing $\eps_0$ such that $\alpha p\eps_{0}=\frac{1}{2}$, we have
\[
  \|\psi_h\|^p_{\sL^p(\Omega)}\geq 1- Ch^{\frac{1}{2}}|\log h|.
\]
 We also obtain
\begin{multline*}
\mathfrak{Q}_{\mathcal{G},h}(\Tilde\psi_{h})\\
\leq hh^{\frac{d}{2}-\frac{d}{p}}\int_{\R^{d-1}\times\R_{+}}\left(\langle(-i\nabla+\Tilde{\mathcal{A}}_{\y_{0}}^{\mathsf{L}})\Tilde\Psi_{0},G_{\ell}(\y_{0})^{-1}(-i\nabla+\Tilde{\mathcal{A}}_{\y_{0}}^{\mathsf{L}})\Tilde\Psi_{0}  \rangle_{\C^d}+\Tilde V(\y_{0})|\Tilde\Psi_{0}|^2\right) \dx \y\\
+hh^{\frac{d}{2}-\frac{d}{p}}\int_{\R^{d-1}\times\{t=0\}} \Tilde\gamma (\y_{0}) |\Tilde\Psi_{0}|^2|g_{\ell}(s_{0})|^{\frac{1}{2}}\dx s+C|\log h|h^{\frac{3}{2}}h^{\frac{d}{2}-\frac{d}{p}}\,
\end{multline*}
and the result follows.
\end{proof}

\section{Lower bound of $\lambda(\mathcal{G}, h, p)$}\label{sec.4}
This section is devoted to the proof of the lower bound in Theorem~\ref{theo.1}.
\subsection{The two-scale localization formula with sliding centers}\label{sec.sliding}
We will need the following type of partition of the unity (see \cite{FR15} for the proof) in the case when $p>2$.
\begin{lemma}\label{lemma-partition}
Let us consider $E=\{(\alpha,\rho,h,\kb)\in(\R_{+})^3\times\Z^d : \alpha\geq \rho\}$. There exists a family of smooth cutoff functions $(\chi^{[\kb]}_{\alpha,\rho,h})_{(\alpha,\rho,h,\kb)\in E}$ on $\R^d$,
with 
\begin{align}
\chi^{[\kb]}_{\alpha,\rho,h}(\x) = \chi^{[0]}_{\alpha,\rho,h}(\x-(2h^{\rho}+h^{\alpha})\kb),
\end{align}
such that $0\leq \chi^{[\kb]}_{\alpha,\rho,h}\leq 1$, 
\begin{align*}
\chi^{[\kb]}_{\alpha,\rho,h} (\x)&=1, &&\text{  on } |\x-(2h^{\rho}+h^{\alpha})\kb|_{\infty}\leq h^{\rho}\,,\\
\chi_{\alpha,\rho,h}^{[\kb]}(\x)&=0, &&\text{ on } |\x-(2h^{\rho}+h^{\alpha})\kb|_{\infty}\geq h^{\rho}+h^{\alpha}\,,
\end{align*} and such that
\begin{equation}\label{eq.part-quad}
\sum_{\kb\in\Z^d}\left(\chi_{\alpha,\rho,h}^{[\kb]}\right)^2=1\,.
\end{equation}
There exists also $D>0$ such that, for all $h>0$,
\begin{equation}\label{partition-remainder}
\sum_{\kb\in\Z^d} |\nabla\chi_{\alpha,\rho,h}^{[\kb]}|^2\leq Dh^{-2\alpha}\,,
\end{equation}
and
\begin{equation}\label{eq.prop-part}
\int_{\R^d} \vert \nabla \chi_{\alpha,\rho,h}^{[\kb]}(\y)\vert^2\,\dx\y\leq Dh^{\rho d}h^{-\alpha-\rho}\,.
\end{equation}
\end{lemma}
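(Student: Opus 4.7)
The plan is to build the family by tensor product from a carefully chosen one-dimensional cutoff. First I would construct a fixed smooth ``switch'' $\sigma : \R \to [0,1]$ such that $\sigma(t)=0$ for $t \le 0$, $\sigma(t)=1$ for $t \ge 1$, and $\sigma(t)^2 + \sigma(1-t)^2 = 1$ for $t \in [0,1]$; the last identity is achieved by setting $\sigma(t) = \sin\!\bigl(\frac{\pi}{2}\theta(t)\bigr)$ for a smooth $\theta : \R\to[0,1]$ with $\theta(t)+\theta(1-t)=1$, the latter being obtained from any compactly supported bump by the standard symmetrization. Write $L_h = 2h^\rho + h^\alpha$. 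I would then define the one-dimensional building block $\chi^{1d}_{\alpha,\rho,h}$ to equal $1$ on $[-h^\rho, h^\rho]$, to vanish outside $[-(h^\rho+h^\alpha), h^\rho+h^\alpha]$, and on the two transition intervals of width $h^\alpha$ to equal $\sigma\!\bigl((h^\rho+h^\alpha \mp x)/h^\alpha\bigr)$. A short computation using the identity for $\sigma$ shows that in the overlap region between two consecutive cells $k$ and $k+1$ the squares add to $1$, so $\sum_{k\in\Z}\chi^{1d}_{\alpha,\rho,h}(x-L_h k)^2 = 1$ on all of $\R$.

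Next, I would set
\[
\chi^{[\kb]}_{\alpha,\rho,h}(\x) = \prod_{i=1}^d \chi^{1d}_{\alpha,\rho,h}\bigl(x_i - L_h k_i\bigr).
\]
The translation identity, the bounds $0 \le \chi^{[\kb]}_{\alpha,\rho,h}\le 1$ and the two support conditions follow at once from the $\ell^\infty$ product structure of the supports. The quadratic partition of unity \eqref{eq.part-quad} follows from Fubini's trick:
\[
\sum_{\kb\in\Z^d}\bigl(\chi^{[\kb]}_{\alpha,\rho,h}\bigr)^2 = \prod_{i=1}^d \Bigl(\sum_{k_i\in\Z}\chi^{1d}_{\alpha,\rho,h}(x_i - L_h k_i)^2\Bigr) = 1.
\]

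For the two gradient bounds, the key observation is that $|(\chi^{1d}_{\alpha,\rho,h})'| \le C h^{-\alpha}$ (since the transition has width $h^\alpha$) while $|\chi^{1d}_{\alpha,\rho,h}|\le 1$. Hence pointwise
\[
|\partial_j \chi^{[\kb]}_{\alpha,\rho,h}(\x)|^2 \le C^2 h^{-2\alpha}\,\mathbf{1}_{T_j^{[\kb]}}(\x),
\]
where $T_j^{[\kb]}$ is the set where $x_j - L_h k_j$ lies in the transition region and the other coordinates lie in the support; observing that, since $\alpha \ge \rho$, $h^\rho + h^\alpha \le 2 h^\rho$, this set has measure bounded by $C\, h^\alpha (h^\rho)^{d-1}$. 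For \eqref{partition-remainder}, at any $\x$ only finitely many (at most $2^d$) cells are active because adjacent centers are $L_h$ apart while the support has radius $h^\rho + h^\alpha < L_h$; summing the pointwise bound over the at most $2^d$ active cells and over $j\in\{1,\dots,d\}$ yields $\sum_\kb|\nabla\chi^{[\kb]}_{\alpha,\rho,h}|^2 \le D h^{-2\alpha}$. For \eqref{eq.prop-part}, I would integrate over $T_j^{[\kb]}$ directly:
\[
\int_{\R^d}|\partial_j \chi^{[\kb]}_{\alpha,\rho,h}|^2 \dx\y \le C^2 h^{-2\alpha} \cdot C\, h^\alpha (h^\rho)^{d-1} = D\, h^{\rho d}\, h^{-\alpha-\rho},
\]
and sum over the $d$ coordinates.

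The construction itself is routine and the pointwise estimate leading to \eqref{partition-remainder} is standard; the only place one must be a little careful is \eqref{eq.prop-part}, where naïvely estimating by the supremum times the measure of the whole support would give $h^{\rho d - 2\alpha}$ instead of the required $h^{\rho d - \rho - \alpha}$. The improvement comes precisely from restricting to the smaller transition slab $T_j^{[\kb]}$ of thickness $h^\alpha$ in the $j$-th coordinate, and this is really the only subtle step in the proof.
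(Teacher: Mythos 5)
Your construction is correct and is essentially the standard one — the paper gives no proof of this lemma, deferring to \cite{FR15}, where the same tensor-product construction from a one-dimensional quadratic partition of unity with transition width $h^{\alpha}$ is used, and your integral bound for \eqref{eq.prop-part} via the transition slab of measure $O(h^{\alpha}h^{\rho(d-1)})$ is exactly the point of that estimate. The only caveat is that your step $h^{\rho}+h^{\alpha}\leq 2h^{\rho}$ uses $h\leq 1$, so strictly you prove \eqref{eq.prop-part} for the semiclassical regime $h\in(0,1]$ actually used in the paper rather than literally for all $h>0$.
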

The following lemma states that, up to a translation of our quadratic two-scale partition, we may always estimate the global $\sL^p$-norm (resp. the global energy) by the local $\sL^p$-norms (resp. the local energies). It is a generalization and strengthening of \cite[Lemma 4.3]{FR15}.
\begin{lemma}\label{lem.translation}
Let $p \geq 2$.
Let us consider the partition of unity $(\chi_{\alpha,\rho,h}^{[\kb]})$ defined in Lemma~\ref{lemma-partition}, with $\alpha\geq \rho>0$. There exist $C>0$ and $h_{0}>0$ such that for all $\psi\in \sL^p(\Omega)$ and $h\in(0,h_{0})$, there exists $\tau_{\alpha,\rho,h,\psi}=\tau\in\R^d$ such that
\begin{gather*}
\sum_{\kb\in\Z^d} \int_{\Omega} |\Tilde\chi_{\alpha,\rho,h}^{[\kb]}\psi(\x)|^p \dx \x
  \leq\int_{\Omega}|\psi(\x)|^p\dx \x
  \leq (1+Ch^{\alpha-\rho})\sum_{\kb\in\Z^d} \int_{\Omega} |\Tilde\chi_{\alpha,\rho,h}^{[\kb]}\psi(\x)|^p \dx \x,\\
\sum_{\kb\in\Z^d}\mathfrak{Q}_{\mathcal{G}, h}(\Tilde\chi_{\alpha,\rho,h}^{[\kb]}\psi) -
\Tilde Dh^{2-\rho - \alpha}\|\psi\|_{\sL^2(\Omega)}^2 \leq \mathfrak{Q}_{\mathcal{G}, h}(\psi) \leq \sum_{\kb\in\Z^d}\mathfrak{Q}_{\mathcal{G}, h}(\Tilde\chi_{\alpha,\rho,h}^{[\kb]}\psi)\,,
\end{gather*}
with $\Tilde\chi_{\alpha,\rho,h}^{[\kb]}(\x)=\chi_{\alpha,\rho,h}^{[\kb]}(\x-\tau)$.
Moreover, the translated partition $(\Tilde\chi_{\alpha,\rho,h}^{[\kb]})$ still satisfies \eqref{partition-remainder}.
\end{lemma}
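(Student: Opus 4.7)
The plan is to establish the $\sL^p$ comparisons and the IMS-type localization of $\mathfrak{Q}_{\mathcal{G},h}$ separately, noting that three of the four inequalities hold for any translation; only the $\sL^p$ upper bound and the \emph{sharp} error in the quadratic-form lower bound force a careful, and in fact common, choice of $\tau$ via an averaging argument over the fundamental cell $T=[0,2h^\rho+h^\alpha)^d$.

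For the $\sL^p$ part, the lower bound is pointwise immediate from $\sum_\kb(\chi^{[\kb]})^p\le\sum_\kb(\chi^{[\kb]})^2=1$, valid since $0\le\chi^{[\kb]}\le 1$ and $p\ge 2$. For the upper bound, I introduce the nonnegative periodic defect $g=1-\sum_\kb(\chi^{[\kb]})^p$, supported on the union of shells around the interior cubes of side $h^\rho$. The shell volume is $O(h^{\rho(d-1)}h^\alpha)$ per cell, so $\int_T g\le Ch^{\alpha-\rho}|T|$. Fubini and periodicity give
\[
\frac{1}{|T|}\int_T\int_\Omega g(\x-\tau)|\psi(\x)|^p\dx\x\dx\tau=\Bigl(\tfrac{1}{|T|}\int_T g\Bigr)\|\psi\|^p_{\sL^p(\Omega)}\le Ch^{\alpha-\rho}\|\psi\|^p_{\sL^p(\Omega)},
\]
hence some $\tau\in T$ yields $\int_\Omega g(\x-\tau)|\psi|^p\dx\x\le 2Ch^{\alpha-\rho}\|\psi\|^p_{\sL^p(\Omega)}$, and rearranging produces the multiplier $1+C'h^{\alpha-\rho}$.

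For the quadratic form, the Leibniz rule applied to $(-ih\nabla+\A)(\widetilde\chi^{[\kb]}\psi)=\widetilde\chi^{[\kb]}(-ih\nabla+\A)\psi-ih(\nabla\widetilde\chi^{[\kb]})\psi$, together with the cancellation $\sum_\kb\widetilde\chi^{[\kb]}\nabla\widetilde\chi^{[\kb]}=\tfrac12\nabla(\sum_\kb(\widetilde\chi^{[\kb]})^2)=0$ and the recombination $\sum(\widetilde\chi^{[\kb]})^2=1$ for the potential and boundary terms, yields the IMS identity
\[
\sum_\kb\mathfrak{Q}_{\mathcal G,h}(\widetilde\chi^{[\kb]}\psi)=\mathfrak{Q}_{\mathcal G,h}(\psi)+h^2\sum_\kb\int_\Omega|\nabla\widetilde\chi^{[\kb]}|^2|\psi|^2\dx\x,
\]
which immediately gives the upper bound on $\mathfrak{Q}(\psi)$. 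The naive pointwise bound \eqref{partition-remainder} alone only yields the remainder $Dh^{2-2\alpha}\|\psi\|^2_{\sL^2}$, which is weaker than the claimed $\widetilde Dh^{2-\alpha-\rho}\|\psi\|^2_{\sL^2}$ when $\alpha>\rho$. To sharpen, I apply the same averaging trick to the shell indicator $\mathbf{1}_S$, where $S=\{\y:\sum_\kb|\nabla\chi^{[\kb]}(\y)|^2\neq 0\}$: the functional $\tau\mapsto\int_\Omega\mathbf{1}_S(\x-\tau)|\psi|^2\dx\x$ likewise has average at most $Ch^{\alpha-\rho}\|\psi\|^2_{\sL^2(\Omega)}$. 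Bounding the sum of the two normalized defects and invoking pigeonhole, a single $\tau$ controls both at once, so that $|\nabla\widetilde\chi^{[\kb]}|^2$ is effectively supported on a set carrying only $O(h^{\alpha-\rho})\|\psi\|^2_{\sL^2}$ mass, and the remainder is bounded by $Dh^{2-2\alpha}\cdot Ch^{\alpha-\rho}\|\psi\|^2_{\sL^2}=\widetilde Dh^{2-\alpha-\rho}\|\psi\|^2_{\sL^2(\Omega)}$. The main technical point is precisely this simultaneous averaging: one must select a single $\tau$ controlling two unrelated functionals, handled by pigeonhole on their sum; once that is done, the rest is a routine Leibniz-plus-IMS calculation.
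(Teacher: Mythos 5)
Your proposal is correct and follows essentially the same route as the paper: both arguments average the $\sL^p$-defect and the gradient remainder over translations $\tau$ in the fundamental cell, use Fubini and the geometric smallness of the transition shells to bound the averages by $O(h^{\alpha-\rho})$ and $O(h^{-\alpha-\rho})$ respectively, and then select a single good $\tau$ by a Chebyshev/pigeonhole argument (the paper intersects two sets of measure $\geq\frac{2}{3}|T|$, you pigeonhole on the sum of the normalized defects — same thing). Your sup-times-measure bound for the gradient term is just an unfolded version of the paper's estimate \eqref{eq.prop-part}, so there is no substantive difference.
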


\begin{proof}
Since $\displaystyle{\sum_{\kb\in\Z^d} (\Tilde\chi_{\alpha,\rho,h}^{[\kb]})^2 =1}$ for all $\tau$, and $p\geq 2$, 
we have immediately
\[
 \sum_{\kb\in\Z^d} \int_{\Omega} |\Tilde\chi_{\alpha,\rho,h}^{[\kb]}\psi(\x)|^p \dx \x
  \leq\int_{\Omega}|\psi(\x)|^p\dx \x\;,
\]
Notice that, by paving $\R^d$ and by using that $\chi_{\alpha,\rho,h}^{[0]}=1$ on a box of sidelength $2h^{\rho}$, 
\begin{align}
\int_{[0,2h^{\rho}+h^{\alpha})^d}\sum_{\kb\in\Z^d} \left|\chi_{\alpha,\rho,h}^{[0]}(\x - (2h^{\rho}+h^{\alpha})\kb-\tau)\right|^p\,\dx\tau = \int_{\R^d} \left|\chi_{\alpha,\rho,h}^{[0]}(\x-\y)\right|^p\,\dx\y
\geq 2^dh^{d\rho}.
\end{align}
Therefore, by changing the order of the integrations,
\begin{align*}
\frac{1}{(2h^{\rho}+h^{\alpha})^d} \int_{[0,2h^{\rho}+h^{\alpha})^d}\left(
\sum_{\kb\in\Z^d} \int_{\Omega} |\Tilde\chi_{\alpha,\rho,h}^{[\kb]}\psi(\x)|^p \dx \x\right)
\,\dx\tau \geq \frac{2^d h^{d\rho}}{(2h^{\rho}+h^{\alpha})^d}\int_{\Omega}|\psi(\x)|^p\dx \x,
\end{align*}
and thus 
\begin{multline*}
\frac{1}{(2h^{\rho}+h^{\alpha})^d} \int_{[0,2h^{\rho}+h^{\alpha})^d}\left(\int_{\Omega}|\psi(\x)|^p\dx \x - 
\sum_{\kb\in\Z^d} \int_{\Omega} |\Tilde\chi_{\alpha,\rho,h}^{[\kb]}\psi(\x)|^p \dx \x\right)
\,\dx\tau \\
\leq \Bigl(1 - \frac{ 2^d h^{d\rho}}{(2h^{\rho}+h^{\alpha})^d}\Bigr)\int_{\Omega}|\psi(\x)|^p\dx \x\,.
\end{multline*}
This last inequality is in the form 
\[\frac{1}{L^d}\int_{[0,L]^d} f(\tau) \dx\tau\leq A\,,\]
with a non-negative and integrable function $f$, so that we get
\[\left|\{\tau\in[0,L]^d : f(\tau)\leq 3A\}\right|\geq \frac{2L^d}{3}\,.\]
In our particular situation, we deduce that the set of \(\tau \in [0,2h^{\rho}+h^{\alpha})^d\) such that 
\begin{equation}\label{eq.Lp-loc-Lp}
\sum_{\kb\in\Z^d} \int_{\Omega} |\Tilde\chi_{\alpha,\rho,h}^{[\kb]}\psi(\x)|^p \dx \x
\ge 1 -3 \Bigl(1 - \frac{ 2^d h^{d\rho}}{(2h^{\rho}+h^{\alpha})^d}\Bigr)
\int_{\Omega}|\psi(\x)|^p\dx \x,
\end{equation}
has measure at least \(\frac{2}{3} (2h^{\rho}+h^{\alpha})^d\). We may notice here that 
\[\frac{ 2^d h^{d\rho}}{(2h^{\rho}+h^{\alpha})^d}=1+\mathcal{O}(h^{\alpha-\rho})\,.\]

With the localization formula associated with the partition 
of unity $(\Tilde\chi_{\alpha,\rho,h}^{[\kb]})$ that is adapted to $\psi$, we infer
\[
\mathfrak{Q}_{\mathcal{G}, h}(\psi)
=\sum_{\kb\in\Z^d}\mathfrak{Q}_{\mathcal{G}, h}(\Tilde\chi_{\alpha,\rho,h}^{[\kb]}\psi) 
-h^2\sum_{\kb\in\Z^d}\|\nabla\Tilde\chi_{\alpha,\rho,h}^{[\kb]}\psi\|_{\sL^2(\Omega)}^2\,,
\]
So that in particular
\[
 \mathfrak{Q}_{\mathcal{G}, h}(\psi) 
 \le \sum_{\kb\in\Z^d}\mathfrak{Q}_{\mathcal{G}, h}(\Tilde\chi_{\alpha,\rho,h}^{[\kb]}\psi).
\]
With Fubini's theorem and \eqref{eq.prop-part}, we also observe that, for all $\x\in\R^d$,
\begin{multline*}
 \frac{1}{(2h^{\rho}+h^{\alpha})^d}\int_{[0,2h^{\rho}+h^{\alpha})^d}\sum_{\kb\in\mathbb{Z}} \vert \nabla \chi_{\alpha,\rho,h}^{[0]}(\x - (2h^{\rho}+h^{\alpha})\kb-\tau)\vert^2\,\dx\tau \\
 = \frac{1}{(2h^{\rho}+h^{\alpha})^d} \int_{\R^d} \vert \nabla \chi_{\alpha,\rho,h}^{[0]}(\x-\y)\vert^2\,\dx\y
\leq \frac{C}{h^{\rho + \alpha}}.
\end{multline*}
Therefore,
\begin{multline*}
\frac{1}{(2h^{\rho}+h^{\alpha})^d} \int_{[0,2h^{\rho}+h^{\alpha})^d}\Bigl(
\sum_{\kb\in\Z^d}\mathfrak{Q}_{\mathcal{G}, h}(\Tilde\chi_{\alpha,\rho,h}^{[\kb]}\psi)
- \mathfrak{Q}_{\mathcal{G}, h}(\psi)\Bigr) \\
= \frac{h^2}{(2h^{\rho}+h^{\alpha})^d} \Bigl(\int_{[0,2h^{\rho}+h^{\alpha})^d}
\sum_{\kb\in\Z^d}\|\nabla\Tilde\chi_{\alpha,\rho,h}^{[\kb]}\psi\|_{\sL^2(\Omega)}^2\Bigr)\dx \tau\,,\\
\le C h^{2 - \rho- \alpha} \|\psi\|_{\sL^2(\Omega)}^2
\end{multline*}
And thus the set of \(\tau \in [0,2h^{\rho}+h^{\alpha})^d\) such that 
\begin{equation*}
\frac{1}{(2h^{\rho}+h^{\alpha})^d} \int_{[0,2h^{\rho}+h^{\alpha})^d}
\sum_{\kb\in\Z^d}\mathfrak{Q}_{\mathcal{G}, h}(\Tilde\chi_{\alpha,\rho,h}^{[\kb]}\psi)
- \mathfrak{Q}_{\mathcal{G}, h}(\psi) \le 3C h^{2 - \rho- \alpha} \|\psi\|_{\sL^2(\Omega)}^2
\end{equation*}
has measure at least \(\frac{2}{3} (2h^{\rho}+h^{\alpha})^d\).

We conclude that the desired estimates are satisfied for a set of \(\tau\) of measure at least  \(\frac{1}{3} (2h^{\rho}+h^{\alpha})^d\).
\end{proof}

\begin{remark}
Note that if $p=2$, we choose $\alpha=\rho$ and we do not need Lemma~\ref{lem.translation}.
\end{remark}

\subsection{Approximation by the homogeneous geometry}
In this section, we prove Proposition~\ref{prop.minorationL2} and the lower bound in Theorem~\ref{theo.1}. Since the lower bound in the nonlinear case ($p>2$) is more subtle we will mainly focus on this case. Note that in many places the estimates when $p=2$ are better and easier to obtain. 

Keeping in mind the estimate of the quadratic form of Lemma~\ref{lem.translation}, we must now approximate the local energies $\mathfrak{Q}_{\mathcal{G}, h}(\Tilde\chi_{\alpha,\rho,h}^{[\kb]}\psi)$. To lighten the notation, we let $\psi_{\kb}=\Tilde\chi_{\alpha,\rho,h}^{[\kb]}\psi$.
\subsubsection{Interior estimates}\label{sec.int-esti}
Let us consider the $\kb \in \Z^d$ such that $\supp (\Tilde\chi_{\alpha,\rho,h}^{[\kb]})\cap \partial\Omega=\emptyset$. We have
\[\|(-ih\nabla+\A)\psi_{\kb}\|^2_{\sL^2(\Omega)}=\|(-ih\nabla+\mathcal{A}^{\mathsf{L}}_{\x_{\kb}}+\mathcal{R}_{\kb})\Psi_{\kb}\|^2_{\sL^2(\Omega)}\,,\]
where $\Psi_{\kb}=e^{i\varphi_{\kb}/h}\psi_{\kb}$ for a suitable choice of gauge $\varphi_{\kb}$ and the Taylor remainder $\mathcal{R}_{\kb}$ satisfies, on the support of $\Tilde\chi_{\alpha,\rho,h}^{[\kb]}$, $|\mathcal{R}_{\kb}|\leq Ch^{2\rho}$. An elementary inequality implies, for all $\eps\in(0,1)$,
\[\|(-ih\nabla+\A)\psi_{\kb}\|^2_{\sL^2(\Omega)}\geq (1-\eps)\|(-ih\nabla+\mathcal{A}^{\mathsf{L}}_{\x_{\kb}})\Psi_{\kb}\|^2_{\sL^2(\Omega)}-C^2 h^{4\rho}\eps^{-1}\|\psi_{\kb}\|_{\sL^2(\Omega)}^2\,.\]
Moreover, we have
\[h\int_{\Omega} V(\x)|\psi_{\kb}|^2\dx\x\geq h\int_{\Omega} V(\x_{\kb})|\psi_{\kb}|^2\dx\x-\hat Ch^{1+\rho}\|\psi_{\kb}\|^2_{\sL^2(\Omega)}\,.\]
Since we investigate the case $p>2$, one need to control the remainder involving $\|\psi_{\kb}\|^2_{\sL^2(\Omega)}$ and not $\|\psi_{\kb}\|^2_{\sL^p(\Omega)}$. 
Note that we do not have to care about this when $p=2$.

We have, by homogeneity and the min-max principle,
\[\lambda(\mathcal{G}_{\x_{\kb}}, 1, 2)h\|\Psi_{\kb}\|^2_{\sL^2(\Omega)}\leq \mathfrak{Q}_{\mathcal{G}_{\x_{\kb}}, h}(\Psi_{\kb})\,.\]
By using Assumption~\ref{a.0}, we deduce that
\begin{multline*}
\mathfrak{Q}_{\mathcal{G}, h}(\psi_{\kb})
\geq (1-\eps)\|(-ih\nabla+\mathcal{A}^{\mathsf{L}}_{\x_{\kb}})\Psi_{\kb}\|^2_{\sL^2(\Omega)}+h\int_{\Omega} V(\x_{\kb})|\Psi_{\kb}|^2\dx\x\\
-(\Tilde C\eps^{-1}h^{4\rho-1}+\Tilde C h^{\rho})\mathfrak{Q}_{\mathcal{G}_{\x_{\kb}}, h}(\Psi_{\kb})\,.
\end{multline*}
Then, we get
\[h\int_{\Omega} V(\x_{\kb})|\Psi_{\kb}|^2\dx\x\geq (1-\eps)h\int_{\Omega} V(\x_{\kb})|\Psi_{\kb}|^2\dx\x-\eps h\left(\max_{\overline{\Omega}}|V|\right)\|\Psi_{\kb}\|^2_{\sL^2(\Omega)}\]
Therefore we deduce 
\begin{equation}\label{eq.control-resteQ}
\mathfrak{Q}_{\mathcal{G}, h}(\psi_{\kb})\geq (1-\Tilde C\eps-\Tilde C\eps^{-1}h^{4\rho-1}-\Tilde C h^{\rho})\mathfrak{Q}_{\mathcal{G}_{\x_{\kb}}, h}(\Psi_{\kb})\,.
\end{equation}
We choose $\eps={h^{2\rho-\frac{1}{2}}}$ and we notice that, since $\rho\in(0,\frac{1}{2})$, then we can forget the term in $h^{\rho}$. By definition of the infimum and homogeneity, it follows that
\begin{equation}\label{eq.min-int}
\mathfrak{Q}_{\mathcal{G}, h}(\psi_{\kb})\geq (1-Ch^{2\rho-\frac{1}{2}})\lambda(\mathcal{G}_{\cb_{\kb}}, 1, p)h^{1+\frac{d}{2}-\frac{d}{p}}\|\psi_{\kb}\|^2_{\sL^p(\Omega)}\,.
\end{equation}
We deduce in particular
\begin{equation}\label{eq.min-int'}
\mathfrak{Q}_{\mathcal{G}, h}(\psi_{\kb})\geq (1-Ch^{2\rho-\frac{1}{2}})h^{1+\frac{d}{2}-\frac{d}{p}}\inf_{\x\in\Omega}\lambda(\mathcal{G}_{\x}, 1, p)\|\psi_{\kb}\|^2_{\sL^p(\Omega)}\,.
\end{equation}

\subsubsection{Boundary estimates}
Let us consider the $\kb$ such that $\supp (\Tilde\chi_{\alpha,\rho,h}^{[\kb]})\cap \partial\Omega\neq\emptyset$. Let us consider $\cb_{\kb}\in\supp (\Tilde\chi_{\alpha,\rho,h}^{[\kb]})\cap \partial\Omega\neq\emptyset$. 
On $\supp (\Tilde\chi_{\alpha,\rho,h}^{[\kb]})$ we may consider the local coordinates (near some point $\mathsf{X}_{\ell}$) introduced in Section~\ref{sec.coord} and $\x_{0}=\cb_{\kb}$.
The coordinates of $\cb_{\kb}$ are $(s_{\kb}, t_{\kb})$ in the parametrization $\Phi_{\ell}$. We use the expression \eqref{eq.bd-coord} and the Taylor formula to get
\begin{multline*}
\mathfrak{Q}_{\mathcal{G}, h}(\psi_{\kb})\geq (1-Ch^{\rho})\int_{U_{\ell}\times(0,t_{\ell})} \langle(-ih \nabla+\Tilde \A)\Tilde\psi_{\kb}, G_{\ell}(\cb_{\kb})^{-1} (-ih \nabla+\Tilde \A)\Tilde\psi_{\kb}\rangle_{\R^d}|G_{\ell}(\cb_{k})|^{\frac{1}{2}}\dx\y\\
+h\int_{U_{\ell}\times(0,t_{\ell})} V(\cb_{\kb})|\Tilde\psi_{\kb}|^2|G_{\ell}(\cb_{k})|^{\frac{1}{2}}\dx\y-\gamma(\cb_{\kb})h^{\frac{3}{2}}\int_{U_{\ell}\times\{0\}}|\Tilde\psi_{\kb}|^2|g_{\ell}(s_{\kb})|^{\frac{1}{2}}\dx s\\
-Ch^{\frac{3}{2}+\rho}\int_{U_{\ell}\times\{0\}}|\Tilde\psi_{\kb}|^2\dx s-Ch^{1+\rho}\int_{U_{\ell}\times(0,t_{\ell})}|\Tilde\psi_{\kb}|^2\dx\y\,.
\end{multline*}
By using Lemma~\ref{lem.trace-eps} with $\eps=h^{\frac{1}{2}}$, we deduce that
\[\int_{U_{\ell}\times\{0\}}|\Tilde\psi_{\kb}|^2\dx s\leq Ch^{\frac{1}{2}}\|\nabla|\Tilde\psi_{\kb}|\|^2+Ch^{-\frac{1}{2}}\|\Tilde\psi_{\kb}\|^2\,,\]
so that, with the diamagnetic inequality,
\[\int_{U_{\ell}\times\{0\}}|\Tilde\psi_{\kb}|^2\dx s\leq Ch^{-\frac{3}{2}}\|(-ih\nabla+\Tilde\A)\Tilde\psi_{\kb}\|^2+Ch^{-\frac{1}{2}}\|\Tilde\psi_{\kb}\|^2\,,\]
and thus
\begin{multline*}
\mathfrak{Q}_{\mathcal{G}, h}(\psi_{\kb})\geq (1-Ch^{\rho})\int_{U_{\ell}\times(0,t_{\ell})} \langle(-ih \nabla+\Tilde \A)\Tilde\psi_{\kb}, G_{\ell}(\cb_{\kb})^{-1} (-ih \nabla+\Tilde \A)\Tilde\psi_{\kb}\rangle_{\R^d}|G_{\ell}(\cb_{\kb})|^{\frac{1}{2}}\dx\y\\
+h\int_{U_{\ell}\times(0,t_{\ell})} V(\cb_{\kb})|\Tilde\psi_{\kb}|^2|G_{\ell}(\cb_{k})|^{\frac{1}{2}}\dx\y-\gamma(\cb_{\kb})h^{\frac{3}{2}}\int_{U_{\ell}\times\{0\}}|\Tilde\psi_{\kb}|^2|g_{\ell}(s_{\kb})|^{\frac{1}{2}}\dx s\\
-Ch^{1+\rho}\int_{U_{\ell}\times(0,t_{\ell})}|\Tilde\psi_{\kb}|^2\dx\y\,.
\end{multline*}
Now, we approximate the vector potential as in Section~\ref{sec.int-esti} and we get 
\begin{equation}\label{eq.Qloc-bnd}
\mathfrak{Q}_{\mathcal{G}, h}(\psi_{\kb})\geq (1-Ch^{2\rho-\frac{1}{2}})\mathfrak{Q}_{\mathcal{G}^*_{\cb_{\kb}}, h}(\Tilde\psi_{\kb})\,,
\end{equation}
and thus
\begin{equation}\label{eq.min-bd}
\mathfrak{Q}_{\mathcal{G}, h}(\psi_{\kb})\geq (1-Ch^{2\rho-\frac{1}{2}})\lambda(\mathcal{G}_{\cb_{\kb}}, 1, p)h^{1+\frac{d}{2}-\frac{d}{p}}\|\psi_{\kb}\|^2_{\sL^p(\Omega)}\,.
\end{equation}
We get
\begin{equation}\label{eq.min-bd'}
\mathfrak{Q}_{\mathcal{G}, h}(\psi_{\kb})\geq (1-Ch^{2\rho-\frac{1}{2}})h^{1+\frac{d}{2}-\frac{d}{p}}\inf_{\x\in\partial\Omega}\lambda(\mathcal{G}_{\x}, 1, p)\|\psi_{\kb}\|^2_{\sL^p(\Omega)}\,.
\end{equation}
From \eqref{eq.min-int'} and \eqref{eq.min-bd'}, we infer that there exist $h_{0}>0$, $C>0$ such that, for all $\kb\in\Z^d$, 
\begin{equation}\label{eq.local-energy-lb}
\mathfrak{Q}_{\mathcal{G}, h}(\psi_{\kb})\geq (1-Ch^{2\rho-\frac{1}{2}})h^{1+\frac{d}{2}-\frac{d}{p}}\inf_{\x\in\overline{\Omega}}\lambda(\mathcal{G}_{\x}, 1, p)\|\psi_{\kb}\|^2_{\sL^p(\Omega)}\,.
\end{equation}
We want to add the different local contributions. We use the following estimate
\[h^{2-\rho-\alpha}\|\psi\|^2_{\sL^2(\Omega)}=h^{2-\rho-\alpha}\sum_{\kb\in\Z^d}\|\psi_{\kb}\|^2_{\sL^2(\Omega)}\leq Ch^{1-\alpha-\rho}\sum_{\kb\in\Z^d}\mathfrak{Q}_{\mathcal{G}, h}(\psi_{\kb}) \,,\]
where we used \eqref{eq.min-int}, \eqref{eq.Qloc-bnd} and the positivity in Assumption~\ref{a.0}. Using Lemma~\ref{lem.translation}, we get
\[h^{2-\rho-\alpha}\|\psi\|^2_{\sL^2(\Omega)}\leq Ch^{1-\alpha-\rho}\mathfrak{Q}_{\mathcal{G}, h}(\psi)\,.\]
Summing over $\kb$ in \eqref{eq.local-energy-lb} and using Lemma~\ref{lem.translation} to reconstruct the total $\sL^p$-norm from the local ones, we get 
\[\mathfrak{Q}_{\mathcal{G}, h}(\psi)\geq  (1-Ch^{1-\alpha - \rho})(1-Ch^{2\rho-\frac{1}{2}})(1-Ch^{\alpha-\rho})h^{1+\frac{d}{2}-\frac{d}{p}}\inf_{\x\in\overline{\Omega}}\lambda(\mathcal{G}_{\x}, 1, p)\|\psi\|^2_{\sL^p(\Omega)}\,.\]
We choose 
\[1-\alpha - \rho=2\rho-\frac{1}{2}=\alpha-\rho\,,\text{ or equivalently }\quad \rho=\frac{1}{3}\,,\quad\alpha=\frac{1}{2}\,.\]
This gives the lower bound in Theorem~\ref{theo.1}.

\begin{remark}
In the case $p=2$, we are led to the choice $1-2\rho=2\rho-\frac{1}{2}$ and thus $\rho=\frac{3}{8}$. Note that, in this case, the term $1-Ch^{\alpha-\rho}$ is replaced by $1$. From this we find the remainder of order $\mathcal{O}(h^{\frac{5}{4}})$ given in Proposition~\ref{prop.minorationL2}.
\end{remark}

\section{Semiclassical localization}\label{sec.5}
In relation with the estimates of Section~\ref{sec.4} and using the upper bound in Theorem~\ref{theo.1}, we may deduce that the minimizers concentrate near the minima of the concentration function $\overline{\Omega}\ni\x\mapsto \lambda(\mathcal{G}_{\x}, 1,p)$.
\subsection{A rough localization estimate}
Before obtaining the exponential localization, we start by proving a weaker result.
\begin{proposition}\label{prop.Lp-loc}
For all $\eps>0$, there exist $h_{0}, C>0$ such that for all $h\in(0, h_{0})$ and all $\sL^p$-normalized minimizer $\psi_{h}$ of \eqref{eq.Sobo}, we have
\[\|\psi_{h}\|_{\sL^p(\complement \mathcal{M}_{\eps})}\leq Ch^{\frac{1}{6p}}\,,\]
where $ \mathcal{M}_{\eps}$ is defined in \eqref{eq.Me}.
\end{proposition}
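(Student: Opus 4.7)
The plan is to combine the local lower bounds of Section~\ref{sec.4} with the global upper bound of Theorem~\ref{theo.1}, and then exploit the lower semi-continuity of the concentration function to confine the $\sL^p$-mass of $\psi_h$ to a neighbourhood of $\mathcal{M}$. First I would apply Lemma~\ref{lem.translation} with $\rho=\tfrac13$, $\alpha=\tfrac12$ to the normalized minimizer $\psi_h$, yielding a translated partition $(\Tilde\chi^{[\kb]})$ and pieces $\psi_\kb := \Tilde\chi^{[\kb]}\psi_h$. The refined local lower bounds \eqref{eq.min-int} and \eqref{eq.min-bd} then give, for any $\cb_\kb\in\overline{\Omega}\cap\supp \Tilde\chi^{[\kb]}$,
\[
\mathfrak{Q}_{\mathcal{G},h}(\psi_\kb) \geq (1-Ch^{1/6})\,h^{1+\frac{d}{2}-\frac{d}{p}}\,\lambda(\mathcal{G}_{\cb_\kb},1,p)\,\|\psi_\kb\|_{\sL^p(\Omega)}^2\,.
\]

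Summing in $\kb$, relating $\sum_\kb \mathfrak{Q}(\psi_\kb)$ to $\mathfrak{Q}(\psi_h)=\lambda(\mathcal{G},h,p)$ via Lemma~\ref{lem.translation}, absorbing the cross-term $h^{2-\rho-\alpha}\|\psi_h\|^2_{\sL^2}$ as a relative $O(h^{1/6})$ perturbation thanks to $\|\psi_h\|^2_{\sL^2}\leq \lambda(\mathcal{G},h,p)/\lambda(\mathcal{G},h,2)\leq Ch^{d/2-d/p}$ (Proposition~\ref{prop.minorationL2}), and invoking the upper bound $\lambda(\mathcal{G},h,p)\leq h^{1+\frac{d}{2}-\frac{d}{p}}(1+Ch^{1/2}|\log h|)\,\lambda_0$ (Theorem~\ref{theo.1}) with $\lambda_0 := \inf_{\overline{\Omega}}\lambda(\mathcal{G}_\cdot,1,p)$, this gives the key inequality
\[
\sum_{\kb}\lambda(\mathcal{G}_{\cb_\kb},1,p)\,\|\psi_\kb\|_{\sL^p(\Omega)}^2 \leq \lambda_0\,(1+Ch^{1/6})\,.
\]
I would then split the indices into $I_{\rm near} = \{\kb : \supp \Tilde\chi^{[\kb]}\cap\mathcal{M}_{\eps/2}\ne\emptyset\}$ and its complement $I_{\rm far}$. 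Since $\supp\Tilde\chi^{[\kb]}$ has diameter $O(h^{\rho})$, for $h$ small one has $\supp \Tilde\chi^{[\kb]}\subset\mathcal{M}_\eps$ whenever $\kb\in I_{\rm near}$, while $d(\cb_\kb,\mathcal{M})\geq\eps/4$ whenever $\kb\in I_{\rm far}$; by Proposition~\ref{prop.semicont} and the compactness of $\overline{\Omega}\setminus\mathcal{M}_{\eps/4}$, there exists $\delta=\delta(\eps)>0$ with $\lambda(\mathcal{G}_{\cb_\kb},1,p)\geq \lambda_0+\delta$ for every $\kb\in I_{\rm far}$. Writing $a_\kb := \|\psi_\kb\|^p_{\sL^p}\in[0,1]$, using $a_\kb^{2/p}\geq a_\kb$ (which crucially needs $p>2$) together with $\sum_\kb a_\kb\geq 1-Ch^{1/6}$ from Lemma~\ref{lem.translation}, the key inequality reduces to $\delta\sum_{\kb\in I_{\rm far}}a_\kb^{2/p}\leq 2C\lambda_0\,h^{1/6}$, hence $\sum_{\kb\in I_{\rm far}}a_\kb\leq Ch^{1/6}$.

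Finally, for $\x\in\complement\mathcal{M}_\eps$ and $h$ small, only $\kb\in I_{\rm far}$ contribute to $\psi_h(\x)$; using $\sum_\kb(\Tilde\chi^{[\kb]})^2=1$ pointwise together with the bounded overlap $N$ of the supports and the convexity of $t\mapsto t^{p/2}$,
\[
|\psi_h(\x)|^p = \Bigl(\sum_{\kb\in I_{\rm far}} |\psi_\kb(\x)|^2\Bigr)^{p/2} \leq N^{p/2-1}\sum_{\kb\in I_{\rm far}}|\psi_\kb(\x)|^p\,.
\]
Integrating over $\complement\mathcal{M}_\eps$ and invoking the bound on $\sum_{\kb\in I_{\rm far}}a_\kb$ yields $\|\psi_h\|^p_{\sL^p(\complement\mathcal{M}_\eps)}\leq Ch^{1/6}$, i.e.\ the desired $\|\psi_h\|_{\sL^p(\complement\mathcal{M}_\eps)}\leq Ch^{1/(6p)}$. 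The main subtlety lies in reconciling the $\sL^2$-type identity $\sum(\Tilde\chi^{[\kb]})^2=1$ with the $\sL^p$-quantities that appear in the estimates: both the elementary inequality $a_\kb^{2/p}\geq a_\kb$ (using $p>2$ and $a_\kb\le 1$) and the finite-overlap argument used to rebuild the global $\sL^p$-norm from the local pieces play an essential role.
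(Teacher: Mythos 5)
Your proposal is correct and follows essentially the same strategy as the paper: the sliding partition of Lemma~\ref{lem.translation} with $\rho=\tfrac13$, $\alpha=\tfrac12$, the local lower bounds \eqref{eq.min-int}--\eqref{eq.min-bd}, the gap $\delta>0$ away from $\mathcal{M}$ obtained from lower semi-continuity on the compact set $\overline{\Omega}\setminus\mathcal{M}_{\eps/4}$, and the upper bound of Theorem~\ref{theo.1}. The only (harmless) variations are bookkeeping: you reassemble $\|\psi_h\|^p_{\sL^p(\complement\mathcal{M}_\eps)}$ via finite overlap and convexity rather than the paper's comparison of the weights $|\Tilde\chi^{[\kb]}|^2$ and $|\Tilde\chi^{[\kb]}|^p$ through \eqref{eq.Lp-loc-Lp}, and you pass from $\sum a_\kb^{2/p}$ to $\sum a_\kb$ using $a_\kb\le a_\kb^{2/p}$ instead of the paper's superadditivity step yielding $h^{p/12}$ — both give the same final bound $Ch^{1/(6p)}$.
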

\begin{proof}
We use again Lemma~\ref{lem.translation} with $\psi=\psi_{h}$ and $\rho=\frac{1}{3}\,,\alpha=\frac{1}{2}$ to get
\[\sum_{\kb\in\Z^d}\mathfrak{Q}_{\mathcal{G}, h}(\Tilde\chi_{\alpha,\rho,h}^{[\kb]}\psi) -
\Tilde Dh^{2-\rho - \alpha}\|\psi\|_{\sL^2(\Omega)}^2 \leq \lambda(\mathcal{G}, h, p)\|\psi_{h}\|^2_{\sL^p(\Omega)}\,.\]
We deduce that
\[(1-Ch^{\frac{1}{6}})\sum_{\kb\in\Z^d}\mathfrak{Q}_{\mathcal{G}, h}(\Tilde\chi_{\alpha,\rho,h}^{[\kb]}\psi_{h}) \leq \lambda(\mathcal{G}, h, p)\|\psi_{h}\|^2_{\sL^p(\Omega)}\,.\]
Therefore, we find
\begin{equation*}
\sum_{\kb\in\Z^d}\mathfrak{Q}_{\mathcal{G}, h}(\psi_{h, \kb}) \leq  \lambda(\mathcal{G}, h, p)\|\psi_{h}\|^2_{\sL^p(\Omega)}+ Ch^{\frac{1}{6}}h^{1+\frac{d}{2}-\frac{d}{p}}\|\psi_{h}\|^2_{\sL^p(\Omega)}\,.
\end{equation*}
We again have by Lemma~\ref{lem.translation},
\[\|\psi_{h}\|^2_{\sL^p(\Omega)}=\left(\int_{\Omega} |\psi_{h}|^p\dx\x\right)^{\frac{2}{p}}\leq (1+Ch^{\frac{1}{6}})\left(\sum_{\kb\in\Z^d} \|\Tilde\chi_{\alpha,\rho,h}^{[\kb]}\psi_{h}\|^p_{\sL^p(\Omega)}\right)^{\frac{2}{p}} \,,\]
and thus
\[\|\psi_{h}\|^2_{\sL^p(\Omega)}\leq\left(\sum_{\kb\in\Z^d} \|\Tilde\chi_{\alpha,\rho,h}^{[\kb]}\psi_{h}\|^p_{\sL^p(\Omega)}\right)^{\frac{2}{p}}+Ch^{\frac{1}{6}}\|\psi_{h}\|^2_{\sL^p(\Omega)}\,.\]
By using that $p\geq 2$, we deduce that
\begin{equation}\label{eq.sumQ}
\sum_{\kb\in\Z^d}\mathfrak{Q}_{\mathcal{G}, h}(\psi_{h, \kb}) \leq  \lambda(\mathcal{G}, h, p)\sum_{\kb\in\Z^d} \|\psi_{h,\kb}\|^2_{\sL^p(\Omega)}+ Ch^{\frac{1}{6}}h^{1+\frac{d}{2}-\frac{d}{p}}\|\psi_{h}\|^2_{\sL^p(\Omega)}\,.
\end{equation}
Then, we consider the local energies. For $\kb$ such that $\supp (\Tilde\chi_{\alpha,\rho,h}^{[\kb]})\cap \partial\Omega=\emptyset$, we have
\begin{equation*}
\mathfrak{Q}_{\mathcal{G}, h}(\psi_{h, \kb})\geq (1-Ch^{\frac{1}{6}})\lambda(\mathcal{G}_{\x_{\kb}}, 1, p)h^{1+\frac{d}{2}-\frac{d}{p}}\|\psi_{h, \kb}\|^2_{\sL^p(\Omega)}\,,
\end{equation*}
and, for $\kb$ such that $\supp (\Tilde\chi_{\alpha,\rho,h}^{[\kb]})\cap \partial\Omega\neq\emptyset$,
\begin{equation*}
\mathfrak{Q}_{\mathcal{G}, h}(\psi_{h, \kb})\geq (1-Ch^{\frac{1}{6}})\lambda(\mathcal{G}_{\cb_{\kb}}, 1, p)h^{1+\frac{d}{2}-\frac{d}{p}}\|\psi_{h, \kb}\|^2_{\sL^p(\Omega)}\,.
\end{equation*}
For $\eps, h>0$, let us introduce
\[K_{\eps, h}=\left\{\kb\in\Z^d : \supp (\Tilde\chi_{\alpha,\rho,h}^{[\kb]})\cap\mathcal{M}_{\frac{\eps}{2}}\neq\emptyset\right\}\,.\]
For all $\eps>0$, there exists $c_{\eps}>0$ such that for all $\kb\notin K_{\eps, h}$, we have
\begin{equation}\label{eq.large-lb}
\mathfrak{Q}_{\mathcal{G}, h}(\psi_{h, \kb})\geq (1-Ch^{\frac{1}{6}})\left(\inf_{\x\in\overline{\Omega}}\lambda(\mathcal{G}_{\x}, 1, p)+c_{\eps}\right)h^{1+\frac{d}{2}-\frac{d}{p}}\|\psi_{h,\kb}\|^2_{\sL^p(\Omega)}\,.
\end{equation}
From \eqref{eq.sumQ}, we get
\begin{multline*}
\sum_{\kb\notin K_{\eps,h}}\left(\mathfrak{Q}_{\mathcal{G}, h}(\psi_{h, \kb})- \lambda(\mathcal{G}, h, p)\|\psi_{h,\kb}\|^2_{\sL^p(\Omega)}\right)+\sum_{\kb\in K_{\eps,h}}\left(\mathfrak{Q}_{\mathcal{G}, h}(\psi_{h, \kb})- \lambda(\mathcal{G}, h, p)\|\psi_{h,\kb}\|^2_{\sL^p(\Omega)}\right)\\
\leq Ch^{\frac{1}{6}}h^{1+\frac{d}{2}-\frac{d}{p}}\|\psi_{h}\|^2_{\sL^p(\Omega)}  \,.
\end{multline*}
On one hand, from \eqref{eq.large-lb} and the upper bound on $\lambda(\mathcal{G}, h, p)$, we find the existence of  $\Tilde c_{\eps}>0$ such that, for all $h\in (0, h_{0})$,
\begin{equation*}
\sum_{\kb\notin K_{\eps,h}}\left(\mathfrak{Q}_{\mathcal{G}, h}(\psi_{h, \kb})-\lambda(\mathcal{G}, h, p)\|\psi_{h, \kb}\|^2_{\sL^p(\Omega)}\right)\\
\geq \Tilde c_{\eps}h^{1+\frac{d}{2}-\frac{d}{p}}\sum_{\kb\notin K_{\eps, h}}\|\psi_{h, \kb}\|^2_{\sL^p(\Omega)}\,.
\end{equation*}
On the other hand, we get, by using the definition of $\lambda(\mathcal{G}, h, p)$,
\[\sum_{\kb\in K_{\eps,h}}\left(\mathfrak{Q}_{\mathcal{G}, h}(\psi_{h, \kb})- \lambda(\mathcal{G}, h, p)\|\psi_{h,\kb}\|^2_{\sL^p(\Omega)}\right)\geq 0\,.\]
It follows that
\[\Tilde c_{\eps}\sum_{\kb\notin K_{\eps, h}}\|\psi_{h, \kb}\|^2_{\sL^p(\Omega)}\leq Ch^{\frac{1}{6}}\|\psi_{h}\|^2_{\sL^p(\Omega)} \,.\]
Since $p\geq 2$, we deduce that
\[\sum_{\kb\notin K_{\eps, h}}\int_{\Omega}|\Tilde\chi_{\alpha,\rho,h}^{[\kb]}|^p|\psi_{h}|^p\dx\x\leq Ch^{\frac{p}{12}}\|\psi_{h}\|^p_{\sL^p(\Omega)} \,.\]
Thus we have
\begin{multline*}
\sum_{\kb\notin K_{\eps, h}}\int_{\Omega}|\Tilde\chi_{\alpha,\rho,h}^{[\kb]}|^2|\psi_{h}|^p\dx\x
-\sum_{\kb\notin K_{\eps, h}}\int_{\Omega}\left(|\Tilde\chi_{\alpha,\rho,h}^{[\kb]}|^2-|\Tilde\chi_{\alpha,\rho,h}^{[\kb]}|^p\right)|\psi_{h}|^p\dx\x\\
\leq Ch^{\frac{p}{12}}\|\psi_{h}\|^p_{\sL^p(\Omega)} \,.
\end{multline*}
With \eqref{eq.Lp-loc-Lp}, we infer
\begin{multline*}
\int_{\complement\mathcal{M}_{\eps}}|\psi_{h}|^p\dx\x=\sum_{\kb\notin K_{\eps, h}}\int_{\complement\mathcal{M}_{\eps}}|\Tilde\chi_{\alpha,\rho,h}^{[\kb]}|^2|\psi_{h}|^p\dx\x\ \leq\sum_{\kb\notin K_{\eps, h}}\int_{\Omega}|\Tilde\chi_{\alpha,\rho,h}^{[\kb]}|^2|\psi_{h}|^p\dx\x\\
\leq C(h^{\frac{p}{12}}+h^{\frac{1}{6}})\|\psi_{h}\|^p_{\sL^p(\Omega)}\,.
\end{multline*}
\end{proof}

\subsection{Application to the exponential estimates}
Now, Proposition~\ref{prop.Lp-loc} gives an a priori control of the nonlinear potential $-\lambda(\mathcal{G}, h, p)|\psi_{h}|^{p-2}$ away from the minimal set $\mathcal{M}$. Therefore, in this region, we are essentially reduced to a perturbation of a linear equation and we may establish decay estimates \emph{\`a la} Agmon.
 \begin{proposition}\label{prop.exp-loc}
For all $\eps>0$, $\rho\in\left(0, \frac{1}{2}\right)$, there exists $h_{0}, C>0$ such that for all $h\in(0, h_{0})$ and all $\sL^p$-normalized minimizer $\psi_{h}$ of \eqref{eq.Sobo}, we have
\[\|\psi_{h}\|_{\sL^p(\complement \mathcal{M}_{2\eps})}\leq Ce^{-\eps h^{-\rho}}\|\psi_{h}\|_{\sL^p(\Omega)}\,.\]
\end{proposition}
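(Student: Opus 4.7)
The plan is to establish the exponential decay via a semiclassical Agmon identity, using the rough $\sL^p$-localization of Proposition~\ref{prop.Lp-loc} to control the nonlinear term away from $\mathcal{M}$. Heuristically, far from $\mathcal{M}$ the \enquote{nonlinear potential} $-\lambda(\mathcal{G},h,p)|\psi_h|^{p-2}$ is small in $\sL^{p/(p-2)}$, so the Euler--Lagrange equation reduces to a perturbation of a linear Schrödinger-type problem whose positive $\sL^2$-spectrum (guaranteed by Assumption~\ref{a.0} and Proposition~\ref{prop.minorationL2}) generates the exponential decay in the spirit of Agmon--Persson, exactly as in the proof of Proposition~\ref{prop.exp}.

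First I would fix the Lipschitz Agmon weight $\Phi(\x) := \dist(\x, \mathcal{M}_{\eps})$, which satisfies $\Phi \equiv 0$ on $\mathcal{M}_{\eps}$, $|\nabla\Phi| \leq 1$ a.e., and $\Phi \geq \eps$ on $\complement \mathcal{M}_{2\eps}$. Setting $u_h := e^{\Phi/h^{\rho}}\psi_h$, the real weight $e^{\Phi/h^{\rho}}$ preserves the Robin boundary condition, so the IMS localization formula applied to the Euler--Lagrange equation for $\psi_h$ yields
\begin{equation*}
  \mathfrak{Q}_{\mathcal{G},h}(u_h) = \lambda(\mathcal{G},h,p)\int_{\Omega} |\psi_h|^{p-2}|u_h|^2 \dx\x + h^{2-2\rho}\int_{\Omega} |\nabla\Phi|^2 |u_h|^2 \dx\x.
\end{equation*}

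Next I would estimate each term on the right. Splitting the first integral into $\mathcal{M}_{\eps}$ and its complement, on $\mathcal{M}_{\eps}$ one has $u_h = \psi_h$, so its contribution is at most $\int_{\Omega}|\psi_h|^p \dx\x = 1$; on the complement, Hölder's inequality combined with Proposition~\ref{prop.Lp-loc} gives
\begin{equation*}
  \int_{\complement \mathcal{M}_{\eps}} |\psi_h|^{p-2}|u_h|^2 \dx\x \leq \|\psi_h\|_{\sL^p(\complement \mathcal{M}_{\eps})}^{p-2}\|u_h\|_{\sL^p(\Omega)}^2 \leq C h^{(p-2)/(6p)} \|u_h\|_{\sL^p(\Omega)}^2.
\end{equation*}
For the Agmon penalty term I would invoke the linear $\sL^2$-coercivity of Proposition~\ref{prop.minorationL2} (applied to $u_h$) to write $\|u_h\|_{\sL^2(\Omega)}^2 \leq Ch^{-1}\mathfrak{Q}_{\mathcal{G},h}(u_h)$, whence
$h^{2-2\rho}\int_{\Omega}|\nabla\Phi|^2 |u_h|^2 \dx\x \leq Ch^{1-2\rho}\mathfrak{Q}_{\mathcal{G},h}(u_h) = o(\mathfrak{Q}_{\mathcal{G},h}(u_h))$, since $\rho < \tfrac12$.

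To close the bootstrap I would use the definition of the Sobolev constant, $\|u_h\|_{\sL^p(\Omega)}^2 \leq \mathfrak{Q}_{\mathcal{G},h}(u_h)/\lambda(\mathcal{G},h,p)$, and plug all three inequalities into the Agmon identity to obtain, for $h$ small,
\begin{equation*}
  \bigl(1 - o(1) - Ch^{(p-2)/(6p)}\bigr)\mathfrak{Q}_{\mathcal{G},h}(u_h) \leq \lambda(\mathcal{G},h,p),
\end{equation*}
so that $\mathfrak{Q}_{\mathcal{G},h}(u_h) \leq C\lambda(\mathcal{G},h,p)$ and in particular $\|u_h\|_{\sL^p(\Omega)} \leq C$. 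Since $\Phi \geq \eps$ on $\complement \mathcal{M}_{2\eps}$, this gives $e^{\eps h^{-\rho}}\|\psi_h\|_{\sL^p(\complement \mathcal{M}_{2\eps})} \leq \|u_h\|_{\sL^p(\Omega)} \leq C$, which is the claim. The delicate point is this final bootstrap: the two small quantities $h^{1-2\rho}$ (from the Agmon penalty) and $h^{(p-2)/(6p)}$ (from the rough localization) must both be absorbed into the left-hand side simultaneously, and this is exactly why the sharp threshold $\rho < \tfrac12$ appears in the statement, while Proposition~\ref{prop.Lp-loc}, although quantitatively weak, suffices qualitatively.
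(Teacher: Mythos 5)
Your proof is correct and follows essentially the same strategy as the paper's: an Agmon-type exponential weight whose exponent vanishes on $\mathcal{M}_{\eps}$, H\"older's inequality together with Proposition~\ref{prop.Lp-loc} to control the nonlinear potential off $\mathcal{M}_{\eps}$, and the linear coercivity $\lambda(\mathcal{G},h,2)\gtrsim h$ from Proposition~\ref{prop.minorationL2} to absorb the $h^{2-2\rho}$ localization error. Your execution is in fact somewhat more streamlined: you split the nonlinear integral by region and close the estimate in one pass in $\sL^p$, whereas the paper inserts an extra IMS partition $\chi_1^2+\chi_2^2=1$ and bootstraps first in $\sL^2$, then on the quadratic form, accepting a harmless $h^{-\gamma}$ loss in front of the exponential.
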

\begin{proof}
Let us first consider a function $v$ in the form domain of $\mathfrak{Q}_{\mathcal{G}, h}$ and supported away from $\mathcal{M}_{\eps}$. We have, with the H\"older inequality,
\[\lambda(\mathcal{G}, h, p)\int_{\Omega}|\psi_{h}|^{p-2}|v|^2\dx\x\leq \lambda(\mathcal{G}, h, p)\|v\|^2_{\sL^p(\Omega)}\left(\int_{\complement\mathcal{M}_{\eps}}|\psi_{h}|^p\dx\x\right)^{\frac{p-2}{p}}\,.\]
Thus, by using Proposition~\ref{prop.Lp-loc}, we get
\begin{equation}\label{eq.controlNL}
\lambda(\mathcal{G}, h, p)\int_{\Omega}|\psi_{h}|^{p-2}|v|^2\dx\x\leq Ch^{\frac{p-2}{6p}}\lambda(\mathcal{G}, h, p)\|v\|^2_{\sL^p(\Omega)}\leq Ch^{\frac{p-2}{6p}}\mathfrak{Q}_{\mathcal{G}, h}(v)\,.
\end{equation}
In other words, the nonlinear potential is a perturbation of the linear part in the sense of quadratic forms. Note that the equation satisfied by $\psi_{h}$ reads
\[\mathfrak{L}^\mathsf{NL}_{\mathcal{G}, h}\psi_{h}=\left((-ih\nabla+\A)^2+hV-\lambda(\mathcal{G}, h, p)|\psi_{h}|^{p-2}\right)\psi_{h}=0\,,\]
with the Robin condition
\[(-ih\nabla+\A)\psi\cdot\n(\x)=-ih^{\frac{1}{2}}\gamma(\x)\psi(\x),\quad\x\in\partial\Omega\,.\]
Let us now introduce our exponential weight. We consider a smooth cutoff function $0\leq\chi\leq 1$ supported away from $\mathcal{M}_{\eps}$ and being $1$ away from $\mathcal{M}_{2\eps}$. With the localization formula, we get
\[\mathfrak{Q}^{\mathsf{NL}}_{\mathcal{G}, h}(e^{\chi h^{-\rho}\dist(\x,\mathcal{M})}\psi_{h})-Ch^{2-2\rho}\|e^{\chi h^{-\rho}\dist(\x,\mathcal{M})}\psi_{h}\|^2_{\sL^2(\Omega)}\leq 0\,;
\]
all the norms are finite in view of the boundedness of the domain \(\Omega\).
Now we want to distinguish between the region close to $\mathcal{M}$ and far from $\mathcal{M}$. Thus we introduce a smooth quadratic partition of the unity $\chi_{1}^2+\chi_{2}^2=1$ such that $\supp(\chi_{2})\subset\complement\mathcal{M}_{\eps}$. Using again the localization formula, we deduce that
\begin{multline}\label{eq.loc.exp}
\mathfrak{Q}^{\mathsf{NL}}_{\mathcal{G}, h}(\chi_{1}e^{\chi h^{-\rho}\dist(\x,\mathcal{M})}\psi_{h})+\mathfrak{Q}^{\mathsf{NL}}_{\mathcal{G}, h}(\chi_{2}e^{\chi h^{-\rho}\dist(\x,\mathcal{M})}\psi_{h})\\
-\Tilde Ch^{2-2\rho}\|\chi_{1}e^{\chi h^{-\rho}\dist(\x,\mathcal{M})}\psi_{h}\|^2_{\sL^2(\Omega)}-\Tilde Ch^{2-2\rho}\|\chi_{2}e^{\chi h^{-\rho}\dist(\x,\mathcal{M})}\psi_{h}\|^2_{\sL^2(\Omega)}\leq 0\,.
\end{multline}
On one hand, by \eqref{eq.controlNL}, we have
\begin{align}\label{eq.chi2}
&\mathfrak{Q}^{\mathsf{NL}}_{\mathcal{G}, h}(\chi_{2}e^{\chi h^{-\rho}\dist(\x,\mathcal{M})}\psi_{h})-\Tilde Ch^{2-2\rho}\|\chi_{2}e^{\chi h^{-\rho}\dist(\x,\mathcal{M})}\psi_{h}\|^2_{\sL^2(\Omega)}\\
\nonumber&\geq (1-Ch^{\frac{p-2}{6p}})\mathfrak{Q}_{\mathcal{G}, h}(\chi_{2}e^{\chi h^{-\rho}\dist(\x,\mathcal{M})}\psi_{h})-\Tilde Ch^{2-2\rho}\|\chi_{2}e^{\chi h^{-\rho}\dist(\x,\mathcal{M})}\psi_{h}\|^2_{\sL^2(\Omega)}\\
\nonumber&\geq \left((1-Ch^{\frac{p-2}{6p}})\lambda(\mathcal{G}, h,2)-\Tilde Ch^{2-2\rho}\right)\|\chi_{2}e^{\chi h^{-\rho}\dist(\x,\mathcal{M})}\psi_{h}\|^2_{\sL^2(\Omega)}\\
\nonumber&\geq ch\|\chi_{2}e^{\chi h^{-\rho}\dist(\x,\mathcal{M})}\psi_{h}\|^2_{\sL^2(\Omega)}\,.
\end{align}
where the constant $c>0$ comes from Proposition~\ref{prop.minorationL2}.

On the other hand, by support considerations, we have
\[\mathfrak{Q}^{\mathsf{NL}}_{\mathcal{G}, h}(\chi_{1}e^{\chi h^{-\rho}\dist(\x,\mathcal{M})}\psi_{h})=\mathfrak{Q}^{\mathsf{NL}}_{\mathcal{G}, h}(\chi_{1}\psi_{h})\,.\]
But we notice that
\[\mathfrak{Q}^{\mathsf{NL}}_{\mathcal{G}, h}(\chi_{1}\psi_{h})\geq \mathfrak{Q}_{\mathcal{G}, h}(\chi_{1}\psi_{h})-\lambda(\mathcal{G}, h, p)\int_{\Omega}|\chi_{1}\psi_{h}|^2|\psi_{h}|^{p-2}\dx\x\,.\]
Since $\psi_{h}$ is $\sL^p$-normalized and thanks to the H\"older inequality, we have
\[\int_{\Omega}|\chi_{1}\psi_{h}|^2|\psi_{h}|^{p-2}\dx\x\leq \|\chi_{1}\psi_{h}\|^2_{\sL^p(\Omega)}\,.\]
Thus we have
\begin{equation}\label{eq.chi1}
\mathfrak{Q}^{\mathsf{NL}}_{\mathcal{G}, h}(\chi_{1}e^{\chi h^{-\rho}\dist(\x,\mathcal{M})}\psi_{h})\geq 0\,.
\end{equation}
Combining \eqref{eq.loc.exp}, \eqref{eq.chi2}, \eqref{eq.chi1} and again that $\chi=0$ on the support of $\chi_{1}$, we get
\[ch\|\chi_{2}e^{\chi h^{-\rho}\dist(\x,\mathcal{M})}\psi_{h}\|^2_{\sL^2(\Omega)}\leq\Tilde C h^{2-2\rho}\|\psi_{h}\|^2_{\sL^2(\Omega)}\,.\]
From this last estimate (and using again that $\rho\in\left(0,\frac{1}{2}\right)$), it follows that
\[\|e^{\chi h^{-\rho}\dist(\x,\mathcal{M})}\psi_{h}\|^2_{\sL^2(\Omega)}\leq C\|\psi_{h}\|^2_{\sL^2(\Omega)}\,.\]
Now, we come back with this information to \eqref{eq.loc.exp} to deduce that
\[\mathfrak{Q}^{\mathsf{NL}}_{\mathcal{G}, h}(\chi_{2}e^{\chi h^{-\rho}\dist(\x,\mathcal{M})}\psi_{h})\leq C\|\psi_{h}\|^2_{\sL^2(\Omega)}\,,\]
and then, from \eqref{eq.controlNL}, we get
\[\mathfrak{Q}_{\mathcal{G}, h}(\chi_{2}e^{\chi h^{-\rho}\dist(\x,\mathcal{M})}\psi_{h})\leq Ch^{2-2\rho}\|\psi_{h}\|^2_{\sL^2(\Omega)}\leq \Tilde Ch^{2-2\rho}\|\psi_{h}\|^2_{\sL^p(\Omega)}\,,.\]
From the rough estimate $\mathfrak{Q}_{\mathcal{G}, h}(\psi_{h})=\lambda(\mathcal{G}, h, p)\leq C$ and the localization formula, we get
\[\mathfrak{Q}_{\mathcal{G}, h}(\chi_{1}e^{\chi h^{-\rho}\dist(\x,\mathcal{M})}\psi_{h})=\mathfrak{Q}_{\mathcal{G}, h}(\chi_{1}\psi_{h})\leq C\|\psi_{h}\|^2_{\sL^p(\Omega)}\,.\]
By using again the localization formula, we have
\[\mathfrak{Q}_{\mathcal{G}, h}(e^{\chi h^{-\rho}\dist(\x,\mathcal{M})}\psi_{h})\leq C\|\psi_{h}\|^2_{\sL^p(\Omega)}\,,\]
and thus
\[\|e^{\chi h^{-\rho}\dist(\x,\mathcal{M})}\psi_{h}\|_{\sL^p(\Omega)}\leq Ch^{-\gamma}\|\psi\|_{\sL^p(\Omega)}\,,\]
for some $\gamma>0$. The conclusion easily follows.
\end{proof}

\section{Continuity estimates}\label{sec.6}
In this section, we discuss different kinds of continuity results.
\subsection{Continuity}
The following proposition (jointly with Propositions~\ref{prop:interior} and~\ref{prop:upperbound}) implies the upper bound announced in Theorem~\ref{theo.1}.
\begin{proposition}\label{prop.cont}
Here we consider a constant geometry $\mathcal{G}_{\x}$ with $\x\in\overline{\Omega}$ and let $p\in(2,2^*)$. We have the following continuity properties.
\begin{enumerate}[\rm (i)]
\item\label{prop.conti} The function $\x\mapsto \lambda(\mathcal{G}_{\x}, 1, p)$ is continuous on $\Omega$.
\item\label{prop.contii} The function $\x\mapsto \lambda(\mathcal{G}_{\x}, 1, p)$ is continuous on $\partial\Omega$.
\item\label{prop.contiii} The function $\x\mapsto \lambda(\mathcal{G}_{\x}, 1, p)$ is lower semi-continuous on $\overline{\Omega}$.
\end{enumerate}
\end{proposition}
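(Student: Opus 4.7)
The overall strategy rests on two reductions. After translating $\x_0$ to the origin (and, at boundary points, rotating so that $\mathbf{n}(\x_0)$ points in a fixed direction), the quadratic form $\mathfrak{Q}_{\mathcal{G}_\x,1}$ depends on $\x$ only through the continuous scalar data $(V(\x),\B(\x))$ in the interior case and $(V(\x),\B(\x),\gamma(\x))$ in the boundary case. With this preprocessing in hand, I plan to combine an upper semi-continuity argument based on carefully chosen test functions with the Agmon decay of Proposition~\ref{prop.exp}, together with a lower semi-continuity argument based on the concentration-compactness machinery from Section~\ref{sec.2}.

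For the upper bound in (i), I pick a minimizer $\psi_0$ of $\lambda(\mathcal{G}_{\x_0},1,p)$ given by Theorem~\ref{theo.0}\eqref{theo0i}, and transport it to a test function $\psi_n$ for $\mathcal{G}_{\x_n}$ via translation and an appropriate gauge factor. The difference of the quadratic forms reduces to terms of the form
\[
  |V(\x_n)-V(\x_0)|\,\|\psi_0\|_{\sL^2}^2 + C\,|\B(\x_n)-\B(\x_0)|^2\int_{\R^d}|\x|^2|\psi_0|^2\dx\x,
\]
both finite thanks to Proposition~\ref{prop.exp} and both vanishing as $n\to\infty$. Part (ii) is similar, with an extra rotation $R_n\to\Id$ aligning $\mathbf{n}(\x_n)$ with $\mathbf{n}(\x_0)$. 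If the strict inequality \eqref{Int-Bord} is not available at $\x_0$, I replace $\psi_0$ by a compactly supported quasi-minimizer $\psi_0^\varepsilon$ satisfying $\mathfrak{Q}_{\mathcal{G}_{\x_0},1}(\psi_0^\varepsilon)\leq (\lambda(\mathcal{G}_{\x_0},1,p)+\varepsilon)\|\psi_0^\varepsilon\|_{\sL^p}^2$, which requires no decay bounds, and let $\varepsilon\to 0$ after $n\to\infty$. The Robin contribution is continuous in $\gamma(\x_n)$ thanks to the trace estimate of Lemma~\ref{lem.trace-eps}.

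For the lower semi-continuity (iii), I take a sequence $\x_n\to\x_0\in\overline\Omega$ with $\liminf \lambda(\mathcal{G}_{\x_n},1,p)<+\infty$ and split into three cases. When $\x_n,\x_0\in\Omega$, I select $\sL^p$-normalized minimizers $\psi_n$ (existing by Theorem~\ref{theo.0}\eqref{theo0i}) and, after magnetic translation to rule out vanishing exactly as in Section~\ref{sec.bnd-van} but on $\R^d$, I extract a non-trivial weak limit $\psi_\infty\in\sH^1_{\mathcal{A}^\mathsf{L}_{\x_0}}(\R^d)$; testing $\lambda(\mathcal{G}_{\x_0},1,p)$ against $\psi_\infty$ yields $\lambda(\mathcal{G}_{\x_0},1,p)\leq \liminf \lambda(\mathcal{G}_{\x_n},1,p)$. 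The case $\x_n,\x_0\in\partial\Omega$ is analogous via the half-space concentration-compactness of Section~\ref{sec.bnd-van}, combined with the rotation trick used for (ii). For the mixed case $\x_n\in\Omega$, $\x_0\in\partial\Omega$, I use that $\lambda(\mathcal{G}_{\x_n},1,p)=\lambda(\underline{\mathcal{G}}_{\x_n},1,p)$ for interior $\x_n$, observe that the argument of (i) extends to show the map $\x\mapsto \lambda(\underline{\mathcal{G}}_\x,1,p)$ is continuous on the whole of $\overline\Omega$, and conclude with Remark~\ref{rem.magnetictranslation}:
\[
  \liminf_n \lambda(\mathcal{G}_{\x_n},1,p) = \lambda(\underline{\mathcal{G}}_{\x_0},1,p) \geq \lambda(\mathcal{G}_{\x_0},1,p).
\]

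The main obstacle will be precisely this mixed case of (iii): there is no direct way to compare half-space minimizing sequences at $\x_0$ with whole-space minimizers at interior $\x_n$, so one must route through the underlying Euclidean constants $\lambda(\underline{\mathcal{G}}_\x,1,p)$ and invoke the fundamental inequality $\lambda(\mathcal{G}_{\x_0},1,p)\leq \lambda(\underline{\mathcal{G}}_{\x_0},1,p)$. A secondary difficulty is that at boundary points where \eqref{Int-Bord} fails to be strict a minimizer may not exist, which forces the use of compactly supported quasi-minimizers and a careful ordering of the limits in $\varepsilon$ and $n$.
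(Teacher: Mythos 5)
Your overall architecture matches the paper's: upper semi-continuity by transporting compactly supported (quasi-)minimizers as test functions with the coefficients frozen, lower semi-continuity by concentration-compactness, and the mixed interior/boundary case of \eqref{prop.contiii} handled by routing through the Euclidean constants and the inequality $\lambda(\mathcal{G}_{\x},1,p)\leq\lambda(\underline{\mathcal{G}_{\x}},1,p)$. Your use of quasi-minimizers where \eqref{Int-Bord} may fail is also the right reading of the paper's terse upper-bound step.

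There is, however, one step that would fail as written: the lower bound in the boundary--boundary case. You propose to rule out boundary vanishing \enquote{exactly as in Section~\ref{sec.bnd-van}}, but Proposition~\ref{prop.varphi} excludes vanishing only under the strict inequality \eqref{Int-Bord}, which is not assumed in Proposition~\ref{prop.cont} and genuinely fails at boundary points $\x_{*}$ with $\lambda(\mathcal{G}_{\x_{*}},1,p)=\lambda(\underline{\mathcal{G}_{\x_{*}}},1,p)$. At such points the quasi-minimizing sequences for the problems at $\x_{n}$ may drift into the interior, no magnetic translation parallel to $\partial\R^d_{+}$ produces a nonzero weak limit, and your extraction argument stalls. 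The remedy --- which is what the paper does in case (a) of its proof of \eqref{prop.contii} --- is not to exclude vanishing but to exploit it: if all the $\sL^p$-mass leaves every slab $\Sigma_{R}$, a partition of unity in the normal variable gives $\liminf_{n}\lambda(\mathcal{G}_{\x_{n}},1,p)\geq\liminf_{n}\lambda(\underline{\mathcal{G}_{\x_{n}}},1,p)-CR^{-2}$, and letting $R\to\infty$ together with the continuity of the interior constant and $\lambda(\underline{\mathcal{G}_{\x_{*}}},1,p)\geq\lambda(\mathcal{G}_{\x_{*}},1,p)$ yields the claim. This is the same Euclidean-constant detour you already deploy for the mixed case, so the fix is within reach, but it must be applied inside the pure boundary case as well. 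A smaller omission: in the non-vanishing alternative you still need to exclude dichotomy (i.e., show $\|\psi_{\infty}\|_{\sL^p}=1$) before testing, since a weak limit with $\|\psi_{\infty}\|_{\sL^p}<1$ only yields $\liminf_{n}\lambda(\mathcal{G}_{\x_{n}},1,p)\geq\lambda(\mathcal{G}_{\x_{0}},1,p)\,\|\psi_{\infty}\|^2_{\sL^p}$; the strict concavity of $\alpha\mapsto\alpha^{2/p}$, as in Section~\ref{sec.2}, closes this.
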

\begin{proof}
Let us consider \eqref{prop.conti}. This is a consequence of the results in \cite{BNvS,dCvS15}. 
Actually the reader may also adapt the forthcoming proof of \eqref{prop.contii} to get the continuity.

Let us now prove \eqref{prop.contii}. Without loss of generality, we may assume that $\Omega=\R^d_{+}$. Indeed we can simply use a rotation (smooth with respect to $\x$) and a change of gauge to rotate the problem from the affine tangent space to the tangent vector space. 

Let us consider a point $\x_{*}\in\partial\Omega$ and a sequence $(\x_{n})_{n\in\mathbb{N}}\subset \partial \Omega$ such that $\x_{n}\to \x_{*}$ when $n$ goes to infinity. The proof is divided into two steps. First, it is rather easy to get
\[\limsup_{n\to+\infty} \lambda(\mathcal{G}_{\x_{n}}, 1, p)\leq \lambda(\mathcal{G}_{\x_{*}}, 1, p)\,,\qquad \text{ for } \x_{n}\to \x_{*}\in\partial\Omega\,,\]
by using a minimizer associated with $\lambda(\mathcal{G}_{\x_{*}}, 1, p)$ as test function. Then, we shall establish
\[\liminf_{n\to+\infty} \lambda(\mathcal{G}_{\x_{n}}, 1, p)\geq \lambda(\mathcal{G}_{\x_{*}}, 1, p)\,,\qquad \text{ for } \x_{n}\to \x_{*}\in\partial\Omega\,.\]
This last inequality is slightly more subtle and uses the concentration-compactness strategy.

To simplify the notation, we denote $\mathfrak{Q}_{\mathcal{G}_{\x_{n}},1}=\mathfrak{Q}_{n}$ and
\[\begin{split}
  &\lambda(\mathcal{G}_{\x_{n}}, 1, p)=:\lambda_{n}^+\,,\quad\;\liminf_{n\to+\infty}\lambda_{n}^+=:\lambda_{*}^+\,,\\ 
  &\lambda(\underline{\mathcal{G}_{\x_{n}}}, 1, p)=:\lambda_{n}\,,\quad\;\liminf_{n\to+\infty}\lambda_{n}=:\lambda_{*}\,.
\end{split}\]
Let $\psi_{n}$ be an $\sL^p$-normalized function such that. 
\[
  \mathfrak{Q}_{n}(\psi_{n})= \eps_n + \lambda_{n}^+\,
\]
with $\lim_{n\to+\infty}\eps_n = 0$.
We notice that by the positivity property \eqref{a.0ii} in Assumption~\ref{a.0}, $(\psi_{n})$ is bounded in $\sL^2(\R^d_{+})$ and $\sH^1_{\mathsf{loc}}(\R^d_{+})$. By diamagnetism and Sobolev embedding, we infer that $(|\psi_{n}|)$ is also bounded in $\sH^{1}(\R^d_{+})$. 
We are left with concentration-compactness type arguments.
\begin{enumerate}[(a)]
\item Let us deal with the boundary vanishing.

Assume first that for all $R>0$
\[
  \lim_{n\to +\infty} M_R(\psi_n) = 0.
\]
By Proposition~\ref{prop.exclude-van}, we have that for all $R>0$
\[
  \lim_{n\to +\infty} \|\psi_{n}\|_{\sL^p(\Sigma_{R})} = 0
\]
Let us consider a quadratic partition of the unity
\[\chi^2_{R,1}+\chi^2_{R,2}=1\,,\]
with $\supp\chi_{R,1}$ supported in a neighborhood of the boundary of size $R$ and such that \[\Vert \nabla \chi_{R, 1}\Vert_{\sL^\infty} \le \frac{C}{R}.\] 
For  $n\geq 1$, we have, by the usual localization formula (and the fact that $(\psi_{n})$ is bounded in $\sL^2$),
\[
\mathfrak{Q}_{n}(\psi_{n})\geq\mathfrak{Q}_{n}(\Tilde\chi_{R,1}\psi_{n})+\mathfrak{Q}_{n}(\Tilde\chi_{R,2}\psi_{n})-\frac{C}{R^2}
\geq \mathfrak{Q}_{n}(\Tilde\chi_{R,2}\psi_{n}) - \frac{C}{R^2}\,.
\]
Therefore, it follows that
\[
   \lambda_n^+\geq   \lambda_n\|\chi_{R,2}\psi_{n}\|^2_{\sL^p(\R^d)}-\frac{C}{R^2}-\eps_n\,.
\]
We take the limit $n\to+\infty$ and then $R\to+\infty$, 
\[
\liminf_{n\to+\infty}\lambda_{n}^+\geq   \lim_{n\to+\infty}\lambda_n =  \lambda_*\geq \lambda_*^+.
\]
This is the result that we want to prove.

\item Let us now exclude the dichotomy. We consider the case when there exists $R_{0}>0$ such that $M_{R_{0}}(\psi_{n})$ does not go to $0$. 
Up to extraction and magnetic translations parallel to \(\partial \R^d_+\), we may assume that $(\psi_{n})_{n \ge 1}$ weakly converges in  $\sH^1_{\mathsf{loc}}(\R^d_{+})$ and in $\sL^q(\R^d_{+})$  to some $\psi_{*} \ne 0$ for all $q\in[2, 2^*]$.

Assume by contradiction that $\alpha := \|\psi_*\|_{\sL^p(\R^d_{+})}^p<1$.
Let us again consider a quadratic partition of the unity
\[\Tilde\chi^2_{R,1}+\Tilde\chi^2_{R,2}=1\,,\]
with $\supp\Tilde\chi_{R,1}\subset D(0,R)$ such that \(\Vert \nabla \chi_{R, 1}\Vert_{\sL^\infty} \le \frac{C}{R}\). 
For all $R\geq R_{0}$ and $n\geq 1$, as previously, we have, 
\[
\mathfrak{Q}_{n}(\psi_{n})\geq\mathfrak{Q}_{n}(\Tilde\chi_{R,1}\psi_{n})+\mathfrak{Q}_{n}(\Tilde\chi_{R,2}\psi_{n})-\frac{C}{R^2}-\eps_n\,.
\]
In particular, we get 
\[\begin{split}
  \lambda_{n}^+&\geq \lambda_{n}^+\left(\|\Tilde\chi_{R,1}\psi_{n}\|_{\sL^p(\R^d_{+})}^2+ \|\Tilde\chi_{R,2}\psi_{n}\|^2_{\sL^p(\R^d_{+})}\right)-\frac{C}{R^2}-\eps_n\,\\
  &\geq  \lambda_{n}^+\left(\|\psi_{n}\|_{\sL^p(B_R\cap\R^d_{+}))}^2+ (1-\|\psi_{n}\|^p_{\sL^p(B_{2R}\cap\R^d_{+})})^{\frac{2}{p}}\right)-\frac{C}{R^2}-\eps_n\,.
\end{split}\]
Taking the limit $n\to+\infty$ and then $R\to+\infty$, we obtain
\[
   \liminf_{n\to+\infty}\lambda_{n}^+\geq   \liminf_{n\to+\infty}\lambda_{n}^+\left(\alpha^{\frac{2}{p}}+ (1-\alpha)^{\frac{2}{p}}\right).
\]
This contradicts the concavity of $\alpha\mapsto \alpha^{\frac{2}{p}}$ so that $\|\psi_*\|_{\sL^p(\R^d_{+})}^p=1$

\item Finally we consider the pre-compact case. We obtain then that $(\psi_n)$ converges strongly to $\psi_*$ in $\sL^p(\R^d_{+})$. We also start with the localization formula
\[
\mathfrak{Q}_{n}(\psi_{n})\geq\mathfrak{Q}_{n}(\Tilde\chi_{R,1}\psi_{n})-\frac{C}{R^2}\,.
\]
By the weak convergence in \(\sH^1_{\mathrm{loc}}(\R^d_+)\), we obtain for each \(R > 0\)
\[
 \liminf_{n \to \infty}\lambda_n^+
\geq \mathfrak{Q}_{*}(\Tilde\chi_{R,1}\psi_*)\geq\lambda^+_*\|\Tilde\chi_{R,1}\psi_{*}\|^2_{\sL^p(\R^d_{+})}-\frac{C}{R^2}\,.
\]
Taking then the limit $R\to+\infty$, we get:
\[
 \liminf_{n \to \infty}\lambda_n^+
\geq \lambda^+_*.
\]
This is the result that we want to prove.
\end{enumerate}
Finally, concerning \eqref{prop.contiii}, it is sufficient to combine \eqref{prop.conti} and \eqref{prop.contii} with the fact that 
\[
  \forall \x\in\partial\Omega\,,\quad\lambda(\mathcal{G}_{\x}, 1, p)
  \leq \lambda(\underline{\mathcal{G}_{\x}}, 1, p)\,.\qedhere
\]
\end{proof}

\subsection{Sufficient conditions}\label{sec.suff}
In this section, we discuss some sufficient conditions which ensure that Assumption~\ref{a.0} can be satisfied. These conditions are based on the following non-degeneracy result.
\begin{proposition}\label{prop.Neumann}
The bottom of the spectrum $\lambda(\B)$ of the Neumann Laplacian with constant magnetic field on $\sL^2(\R^d_{+})$ satisfies
\[\max\left(\Theta_{0}\|\B^{\parallel}\|_{2}, \Tr^+\B^\perp\right)\leq\lambda(\B)\,,\]
where
\[\B=\left(\begin{array}{c | c}
\B^\perp&\B^\parallel\\
\hline
-(\B^\parallel)^{\mathsf{T}}&0\\
\end{array}\right)\,,\]
 and $\B^\parallel$ is a vector belonging to $\R^{d-1}$ and $\B^\perp$ is an skew-symmetric matrix of size $d-1$. The constant $\Theta_{0}\in(0,1)$ (sometimes called de Gennes constant, see for instance \cite[Chapter 4]{FouHel10}) is the bottom of the spectrum of the Neumann magnetic Laplacian on $\R^2_{+}$ when the magnetic field is constant equal to $1$.
\end{proposition}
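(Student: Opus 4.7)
The plan is to establish the two lower bounds $\Tr^+\B^\perp \leq \lambda(\B)$ and $\Theta_0\|\B^\parallel\|_2 \leq \lambda(\B)$ separately, each time by working in a well-chosen linear gauge and reducing to a fibered problem whose bottom of spectrum is already known. The key preliminary is the choice of gauge with $A_d\equiv 0$, namely
\[A_j(\x) = -\tfrac{1}{2}\sum_{k<d}\B^\perp_{jk}x_k - \B^\parallel_j x_d,\quad 1\leq j<d,\qquad A_d\equiv 0,\]
which satisfies $\dx\A=\B$ by direct inspection of the block structure of $\B$ and for which the magnetic Neumann condition on $\partial\R^d_+$ becomes simply $\partial_d\psi|_{x_d=0}=0$. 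Writing $\mathcal{D}_j=-i\partial_j+A_j$, the quadratic form is $\mathfrak{Q}(\psi)=\sum_{j<d}\|\mathcal{D}_j\psi\|^2+\|\partial_d\psi\|^2$.

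For the bound $\Tr^+\B^\perp\leq\lambda(\B)$, I would view $\sum_{j<d}\|\mathcal{D}_j\psi\|^2$ as the direct integral over $x_d>0$ of the tangential operators $T(x_d)=\sum_{j<d}\mathcal{D}_j^2$ acting on $\sL^2(\R^{d-1})$ in the variable $x'$. At fixed $x_d$ the term $-\B^\parallel_j x_d$ is constant in $x'$, hence pure gauge in the tangential directions, so $T(x_d)$ is a magnetic Laplacian on $\R^{d-1}$ with constant magnetic $2$-form $\B^\perp$. The classical block-diagonal normal form of a skew-symmetric matrix decouples this operator into a sum of $2$D Landau Hamiltonians (one per pair of non-zero eigenvalues $\pm i\beta_k$) and a flat Laplacian on the kernel, which gives $\inf T(x_d)=\sum_k\beta_k=\Tr^+\B^\perp$. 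Integrating the fiberwise spectral inequality in $x_d$ and discarding the nonnegative $\|\partial_d\psi\|^2$ yields the first bound.

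For the bound $\Theta_0\|\B^\parallel\|_2\leq\lambda(\B)$, I would first perform a rotation in the tangential coordinates $(x_1,\dots,x_{d-1})$ (which preserves $\R^d_+$ and acts on $\B^\parallel$ by the same rotation) to reduce to the case $\B^\parallel=\|\B^\parallel\|_2\,e_1$. Freezing $(x_2,\dots,x_{d-1})$ and restricting attention to the $2$D operator $\mathcal{D}_1^2+(-i\partial_d)^2$ on $\sL^2(\R\times\R_+)$, one observes that in our gauge $A_1=c(x_2,\dots,x_{d-1})-\|\B^\parallel\|_2\,x_d$, so after a phase change in $x_1$ absorbing the constant $c$, this $2$D operator is exactly the de Gennes half-plane operator with magnetic field of strength $\|\B^\parallel\|_2$ and Neumann condition at $x_d=0$. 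Its infimum is $\Theta_0\|\B^\parallel\|_2$ by the scaling definition of $\Theta_0$. Fibering over $(x_2,\dots,x_{d-1})\in\R^{d-2}$ and dropping the nonnegative remainder $\sum_{2\leq j<d}\|\mathcal{D}_j\psi\|^2$ yields the second bound, and the two combine into the announced maximum.

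The whole argument rests on two elementary ingredients: the explicit gauge verification that the fibered operators carry exactly the magnetic fields $\B^\perp$ and $\|\B^\parallel\|_2$ respectively, and the two textbook spectral facts recalled above (the constant-field magnetic Laplacian on $\R^{d-1}$ and the de Gennes half-plane operator). I do not anticipate a genuine obstacle; the only point requiring care is the bookkeeping that the fiberwise ``constants'' in $A_1$ are really constants in the slicing variable and therefore truly behave as a gauge transformation at each fixed value of the frozen coordinates.
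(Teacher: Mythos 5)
Your proposal is correct and follows essentially the same route as the paper: normalize $\B^{\parallel}$ by a tangential rotation, choose a linear gauge in which the normal derivative carries no vector potential, drop the nonnegative complementary terms, and use fiberwise (partial) gauge transformations to reduce to the $(d-1)$-dimensional constant-field Laplacian with matrix $\B^{\perp}$ on one hand and the two-dimensional de Gennes half-plane operator with field $\|\B^{\parallel}\|_{2}$ on the other. The only cosmetic differences are that you write the gauge explicitly from the start and obtain the $\Tr^{+}\B^{\perp}$ bound without needing the rotation, whereas the paper performs the rotation once and derives the gauge from it; the spectral inputs and the min-max conclusion are identical.
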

\begin{proof}
Let us consider the rotations that preserve the $x_{d}$-axis. They are in the form
\[\mathsf{Q}=\left(\begin{array}{c | c}
Q&0\\
\hline
0&1\\
\end{array}\right)\,,\]
with $Q\in SO(d-1)$. Letting $\x=\mathsf{Q}\y$, $\mathfrak{L}_{\A}$ is unitarily equivalent to the following operator acting on $\sL^2(\R^d_{+})$:
\[\left(-i\nabla_{\y}+\Tilde\A(\y)\right)^2\,,\qquad\text{ where }\quad \Tilde\A(\y)=\frac{1}{2}\Tilde{\B}\y\,,\quad \Tilde{\B}=\mathsf{Q}^\mathsf{T}\B \mathsf{Q}\,.\]
It is clear that we may find a rotation that sends $\B^\parallel$ onto $\|\B^\parallel\|_{2}\e_{d-1}$ and we may assume that $B_{k d}=0$ for $1\leq k\leq d-2$. We notice that
\begin{equation}\label{eq.tildeA0}
\Tilde\A_{0}(\y^{\parallel})=\Tilde\A(\y^\parallel,0)=\frac{1}{2}Q^\mathsf{T}\B^\perp Q\y^\parallel\,.
\end{equation}
The magnetic Laplacian is now in the form
\[\left(-i\nabla_{\y}+\Tilde\A(\y)\right)^2=(D_{y_{d}}-\frac{1}{2}\Tilde B_{d\, d-1}y_{d-1})^2+(D_{y_{d-1}}+\Tilde A_{d-1})^2+\sum_{\ell=1}^{d-2}(D_{y_{\ell}}+\Tilde A_{\ell})^2\,,\]
with
\[\Tilde A_{d-1}=\frac{1}{2}\Tilde B_{d\,d-1}y_{d}+\hat A_{d-1}\,,\]
where $\hat A_{d-1}$ and $(\Tilde A_{\ell})_{1\leq \ell\leq d-2}$ are independent from $y_{d}$. After a change of gauge, we may consider the equivalent operator
\[\mathfrak{L}=D_{y_{d}}^2+\left(D_{y_{d-1}}+\Tilde B_{d\,d-1}y_{d}+\sum_{\ell=1}^{d-2} \Tilde B_{\ell d-1}y_{\ell}\right)^2+\sum_{\ell=1}^{d-2}(D_{y_{\ell}}+\Tilde A_{\ell}(y_{1},\ldots, y_{d-2},0, 0))^2\,.\]
Therefore, there is no more dependence on $y_{d-1}$. We may notice that, by definition, the lower dimension magnetic Laplacian
\[\mathfrak{L}^\perp=\left(D_{y_{d-1}}+\sum_{\ell=1}^{d-2} \Tilde B_{\ell d-1}y_{\ell}\right)^2+\sum_{\ell=1}^{d-2}(D_{y_{\ell}}+\Tilde A_{\ell}(y_{1},\ldots, y_{d-2},0, 0))^2\]
admits $\B^{\perp}$ as magnetic matrix.

Then, we notice that, for all $\psi$ in the domain of $\mathfrak{L}$, we have
\begin{multline*}
\langle\mathfrak{L}\psi,\psi\rangle_{\sL^2(\R^d_{+})}\geq\\
\int_{(y_{1},\ldots, y_{d-2})\in\R^{d-2}} \int_{(y_{d-1}, y_{d})\in\R^2_{+}} |D_{y_{d}}\psi|^2+|(D_{y_{d-1}}+\Tilde B_{d\,d-1}y_{d}+\sum_{\ell=1}^{d-2} \Tilde B_{\ell d-1}y_{\ell})\psi|^2\dx y_{d-1}\dx y_{d}\,.
\end{multline*}
Thus, by using a partial change of gauge (for fixed $(y_{1},\ldots, y_{d-2})$) to eliminate the sum term, we infer that
\[\langle\mathfrak{L}\psi,\psi\rangle_{\sL^2(\R^d_{+})}\geq \Theta_{0}|\Tilde B_{d\,d-1}|\|\psi\|^2_{\sL^2(\R^d_{+})}=\Theta_{0}\|\B^\parallel\|_{2}\|\psi\|^2_{\sL^2(\R^d_{+})}\,.\]
In the same way, dropping the term in $D_{y_{d}}$ and canceling the term $\Tilde B_{d\,d-1}y_{d}$ by a partial change of gauge, we find
\[\langle\mathfrak{L}\psi,\psi\rangle_{\sL^2(\R^d_{+})}\geq\Tr^+\B^\perp\|\psi\|^2_{\sL^2(\R^d_{+})}\,.\]
The conclusion follows by the min-max principle.
\end{proof}
We can now prove the following proposition.
\begin{proposition}
We have the following sufficient conditions.
\begin{enumerate}[i.]
\item Point \eqref{a.0i} in Assumption~\ref{a.0} is satisfied when $V\geq 0$ and $\B$ does not vanish on $\Omega$.
\item Point \eqref{a.0ii} in  Assumption~\ref{a.0} is satisfied when
\[\displaystyle{\inf_{\x\in\partial\Omega}\lambda((\x+\mathsf{T}_{\x}(\partial\Omega)+\R_{+}\n(\x),\mathsf{Id}, V(\x), \mathcal{A}^\mathsf{L}_{\x},0),1,2)>0}\] 
and if $\displaystyle{\max_{\partial\Omega}\gamma_-(\x)}$ is small enough where $\gamma_-(\x) = \max(0,-\gamma(\x))$.
\item Point \eqref{a.0ii} in  Assumption~\ref{a.0} is satisfied when $V\geq 0$, $\B$ does not vanish on the boundary and when $\displaystyle{\max_{\partial\Omega}\gamma_-(\x)}$ is small enough.
\end{enumerate}
\end{proposition}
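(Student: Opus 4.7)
Part (i) is immediate from the pointwise identity $\lambda(\mathcal{G}_\x,1,2)=\Tr^+\B(\x)+V(\x)$ valid on $\Omega$: as recalled after the definition of $\Tr^+\B$, a real skew-symmetric matrix has $\Tr^+\B=0$ if and only if it vanishes, so the hypothesis $\B(\x)\neq 0$ on $\Omega$ yields $\Tr^+\B(\x)>0$ at each interior point, and combining with $V\geq 0$ gives \eqref{a.0i}.

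For part (ii), the plan is to treat the Robin boundary term as a perturbation of the Neumann form and absorb it using a trace inequality. Denote by $\lambda_0(\x)$ the Neumann eigenvalue appearing in the hypothesis. For every $\x\in\partial\Omega$, $\eps>0$, and $\psi\in\sH^1_{\mathcal{A}^\mathsf{L}_\x}(\R^d_+)$, combining Lemma~\ref{lem.trace-eps} with the diamagnetic inequality gives
\[\int_{\partial\R^d_+}|\psi|^2\,\dx\sigma\leq \eps\,\|(-i\nabla+\mathcal{A}^\mathsf{L}_\x)\psi\|^2_{\sL^2(\R^d_+)}+C\eps^{-1}\|\psi\|^2_{\sL^2(\R^d_+)}.\]
The pointwise bound $\gamma(\x)\geq -\gamma_-(\x)$ together with the min-max inequality
\(\|(-i\nabla+\mathcal{A}^\mathsf{L}_\x)\psi\|^2+V(\x)\|\psi\|^2\geq \lambda_0(\x)\|\psi\|^2\)
then leads, after a short rearrangement, to
\[\lambda(\mathcal{G}_\x,1,2)\geq \lambda_0(\x)+\gamma_-(\x)\eps\bigl(V(\x)-\lambda_0(\x)\bigr)-\gamma_-(\x)C\eps^{-1}.\]
Since $V$ and $\lambda_0$ are bounded on the compact set $\partial\Omega$ (the bound on $\lambda_0$ follows, e.g., from Proposition~\ref{prop.Neumann} combined with the smoothness of $\B$), fixing $\eps=1$ and requiring $\max_{\partial\Omega}\gamma_-$ small enough forces the right-hand side to be uniformly larger than $\tfrac{1}{2}\inf_{\partial\Omega}\lambda_0>0$, which is \eqref{a.0ii}.

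For part (iii), the plan is to reduce to part (ii) by checking its Neumann hypothesis. Under $V\geq 0$ the constant electric potential merely shifts the spectrum, so $\lambda_0(\x)=V(\x)+\lambda(\B(\x))\geq \lambda(\B(\x))$, where $\lambda(\B(\x))$ is the pure Neumann magnetic eigenvalue analyzed in Proposition~\ref{prop.Neumann}. That proposition gives
\[\lambda(\B(\x))\geq \max\bigl(\Theta_0\|\B^\parallel(\x)\|_2,\,\Tr^+\B^\perp(\x)\bigr),\]
and since $\B(\x)\neq 0$ forces at least one of $\B^\parallel(\x)$, $\B^\perp(\x)$ to be nonzero, this bound is strictly positive at every $\x\in\partial\Omega$. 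Continuity of the right-hand side in $\x$ together with compactness of $\partial\Omega$ then yields a uniform lower bound on $\lambda_0$, so part (ii) applies. The main technical point throughout is keeping the constants uniform in $\x\in\partial\Omega$; this is handled by the smoothness of $V$, $\B$ and $\gamma$ on $\overline{\Omega}$, and I do not expect any serious obstacle beyond that bookkeeping.
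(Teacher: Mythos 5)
Your proof is correct and follows essentially the same route as the paper: for (ii), the trace estimate of Lemma~\ref{lem.trace-eps} combined with the diamagnetic inequality and the min--max principle to absorb the Robin term, and for (iii), Proposition~\ref{prop.Neumann} together with continuity and compactness of $\partial\Omega$ to verify the hypothesis of (ii). The only slip is the citation of Proposition~\ref{prop.Neumann} for the boundedness of $\lambda_0$: that proposition gives a \emph{lower} bound, whereas the upper bound you need to control $|V(\x)-\lambda_0(\x)|$ follows instead from the trivial comparison $\lambda_0(\x)\leq\lambda(\underline{\mathcal{G}}_{\x},1,2)=\Tr^+\,\B(\x)+V(\x)$, which is bounded on $\partial\Omega$ by smoothness.
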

\begin{proof}
The first point is obvious since $\Tr^+\,\B(\x)=0$ implies that $\B(\x)=0$. 

Let us now consider the second point. For $\x\in\partial\Omega$, we have, by the min-max principle (and splitting the electro-magnetic energy into two equal parts), for all $\psi\in\Dom(\mathfrak{Q}_{\mathcal{G}_{\x}, 1})$,
\[\mathfrak{Q}_{\mathcal{G}_{\x}, 1}(\psi)\geq \frac{c}{2}\|\psi\|^2_{\sL^2(\mathsf{T}_{\x})}+\frac{1}{2}\int_{\mathsf{T}_{\x}} |(-i\nabla+ \mathcal{A}^\mathsf{L}_{\x})\psi|^2+V(\x)|\psi|^2\dx\y-m\int_{\partial\mathsf{T}_{\x}}|\psi|^2\dx\sigma\,,\]
where $\mathsf{T}_{\x}=\x+\mathsf{T}_{\x}(\partial\Omega)+\R_{+}\n(\x)$ and
\[c=\inf_{\x\in\partial\Omega}\lambda(\mathsf{T}_{\x},\mathsf{Id}, V(\x), \mathcal{A}^\mathsf{L}_{\x},0),1,2)\,,\quad m=\max_{\x\in\partial\Omega}\gamma_{-}(\x)\,.\]
Then, by using the diamagnetic inequality and Lemma~\ref{lem.trace-eps}, we have
\[\int_{\partial\mathsf{T}_{\x}}|\psi|^2\dx\sigma\leq \eps \int_{\mathsf{T}_{\x}} |(-i\nabla+ \mathcal{A}^\mathsf{L}_{\x})\psi|^2\dx\y+C\eps^{-1}\|\psi\|^2_{\sL^2(\mathsf{T}_{\x})}\,.\]
Let us introduce $\displaystyle{M=\max_{\x\in\overline{\Omega}} |V(\x)}|$. It follows that
\[\int_{\partial\mathsf{T}_{\x}}|\psi|^2\dx\sigma\leq \eps \int_{\mathsf{T}_{\x}} |(-i\nabla+ \mathcal{A}^\mathsf{L}_{\x})\psi|^2+V(\x)|\psi|^2\dx\y+(\eps M+C\eps^{-1})\|\psi\|^2_{\sL^2(\mathsf{T}_{\x})}\,,\]
and thus, for $\eps\in\left(0,\frac{1}{2m}\right)$,
\[\mathfrak{Q}_{\mathcal{G}_{\x}, 1}(\psi)\geq \left(\frac{c}{2}-Mm\eps-mC\eps^{-1}\right)\|\psi\|^2_{\sL^2(\mathsf{T}_{\x})}\,.\]
We choose $\eps=\min\left(\frac{c}{4mM},\frac{1}{2m}\right)$ and then $m$ small enough to get
\[\mathfrak{Q}_{\mathcal{G}_{\x}, 1}(\psi)\geq \frac{c}{8}\|\psi\|^2_{\sL^2(\mathsf{T}_{\x})}\,.\]
The conclusion follows.

To get the third assertion, we notice that, for all $\x\in\partial\Omega$,
\[\lambda((\mathsf{T}_{\x},\mathsf{Id}, V(\x), \mathcal{A}^\mathsf{L}_{\x},0),1,2)\geq \lambda(\B_{\x})\geq \max\left(\Theta_{0}\|\B_{\x}^{\parallel}\|_{2}, \Tr^+\B_{\x}^\perp\right)\,,\]
where we have used Proposition~\ref{prop.Neumann}. Then, the lower bound is a continuous and positive function of $\x$ on the compact set $\partial\Omega$ and we may apply the result of the second point.
\end{proof}

\subsection{The Dirichlet case}
In this section we discuss the existence of the minimizers when $\Omega=\R^d_{+}$, $V=0$, $\B$ is uniform, non-zero and when the boundary carries the Dirichlet condition. 

\begin{proposition}
Here $d\geq 3$. Let us consider $\lambda=\lambda((\R^d_{+}, \mathsf{Id}, 0, \A,+\infty), p, 1)$ with $\A\in\mathcal{L}(\R^d)$ such that $\dx\A$ is not zero. Then $\lambda$ is not attained. Moreover, if we let $\underline{\lambda}=\lambda((\R^d, \mathsf{Id}, 0, \A,0), p, 1)$, then $\lambda=\underline{\lambda}$.
\end{proposition}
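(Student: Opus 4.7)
The plan is to first establish the equality $\lambda = \underline{\lambda}$ by a two-sided argument, and then deduce non-attainment via unique continuation.

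The inequality $\lambda \geq \underline{\lambda}$ is immediate: any function in $\sH^1_{\A}(\R^d_+)$ vanishing on $\partial\R^d_+$ extends by zero to $\R^d$ while preserving both the magnetic quadratic form and the $\sL^p$-norm, so the Dirichlet half-space infimum dominates the full-space one. For the reverse inequality $\lambda \le \underline\lambda$, I invoke Theorem~\ref{theo.0}~\eqref{theo0i} to obtain a minimizer $u_0$ for $\underline{\lambda}$ (it applies since $\lambda(\underline{\mathsf{G}},1,2) = \Tr^+\,\B > 0$ here, as $\dx\A\neq 0$), together with its exponential decay given by Proposition~\ref{prop.exp}. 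Translating $u_0$ deep into the half-space via the magnetic translation $\tau_{n e_d}$ of Remark~\ref{rem.magnetictranslation}, and then multiplying by a smooth cutoff $\chi : \R^d \to [0,1]$ supported in $\{x_d \geq 1/2\}$ and equal to $1$ on $\{x_d \geq 1\}$, produces a Dirichlet-admissible test function. The linearity of $\A$ yields the identity $(-i\nabla+\A)\tau_{n e_d} u_0(\x) = e^{-i\A(n e_d)\cdot \x}\bigl[(-i\nabla+\A)u_0\bigr](\x - n e_d)$, so the exponential decay of $u_0$ forces $\|\nabla\chi \cdot \tau_{n e_d} u_0\|_{\sL^2(\R^d)}\to 0$ and $\|(1-\chi)\tau_{n e_d} u_0\|_{\sL^p(\R^d)}\to 0$ as $n\to\infty$. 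Consequently $\mathfrak{Q}_{\mathsf{G},1}(\chi\tau_{n e_d} u_0) \to \underline{\lambda}$ and $\|\chi\tau_{n e_d} u_0\|_{\sL^p}\to 1$, which gives $\lambda \leq \underline{\lambda}$.

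For non-attainment, I argue by contradiction: suppose $\lambda$ is achieved by some $\sL^p$-normalized Dirichlet minimizer $\psi$. Its extension by zero, $\tilde\psi$, belongs to $\sH^1_{\A}(\R^d)$ and also achieves $\underline{\lambda}$, hence satisfies weakly on $\R^d$ the Euler-Lagrange equation
\begin{equation*}
(-i\nabla+\A)^2 \tilde\psi = \underline{\lambda}\,|\tilde\psi|^{p-2}\tilde\psi\,.
\end{equation*}
By elliptic regularity applied to the expanded operator $-\Delta-2i\A\cdot\nabla+|\A|^2-i\nabla\cdot\A$ with smooth coefficients, a standard bootstrap shows $\tilde\psi \in \mathcal{C}^\infty(\R^d)$. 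Since $p<2^*$ implies $p/(p-2)>d/2$, the nonlinear potential $|\tilde\psi|^{p-2}$ lies in $\sL^q_{\loc}(\R^d)$ for some $q>d/2$, so the strong unique continuation principle for magnetic Schr\"odinger operators with subcritical potentials applies. Combined with the vanishing of $\tilde\psi$ on the open set $\R^d_-$, it forces $\tilde\psi\equiv 0$ on $\R^d$, contradicting $\|\psi\|_{\sL^p}=1$.

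The main obstacle is the unique continuation step: even though unique continuation for magnetic Schr\"odinger operators with $\sL^q_{\loc}$ potentials ($q>d/2$) is by now classical, one must handle the unbounded growth of the linear potential $\A$ at infinity. Since unique continuation is propagated through compact sets on which $\A$ is bounded, a local gauge transformation reduces the question to the scalar Jerison-Kenig/Wolff framework, so no new difficulty arises. The cutoff computation in step two is also delicate enough to require careful bookkeeping of the three error terms (the gradient term, the truncated $\sL^p$-tail, and the cross term), but each of them decays exponentially in $n$ thanks to Proposition~\ref{prop.exp}.
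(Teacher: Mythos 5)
Your overall strategy coincides with the paper's: extension by zero gives $\lambda\ge\underline\lambda$; translating a full-space minimizer (whose existence and exponential decay follow from Theorem~\ref{theo.0}~\eqref{theo0i} and Proposition~\ref{prop.exp}) deep into the half-space and cutting off gives $\lambda\le\underline\lambda$; and non-attainment is obtained from the zero-extension of a hypothetical minimizer together with unique continuation. The two-sided comparison and the cutoff bookkeeping are correct.

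There is, however, a genuine flaw in your justification of the unique continuation step. You assert that \enquote{a local gauge transformation reduces the question to the scalar Jerison--Kenig/Wolff framework}. This is false: a gauge transformation replaces $\A$ by $\A+\nabla\varphi$ and therefore can never remove the first-order term $-2i\A\cdot\nabla$ on any open set, precisely because $\dx\A=\B\neq 0$ is the standing hypothesis. The scalar theory for $-\Delta+V$ with $V\in\sL^{d/2}_{\loc}$ thus does not apply as stated, and this is the crux of the step. What is needed is a unique continuation theorem that tolerates first-order terms; since $\A$ is linear, hence smooth and locally bounded, such a result is available --- the paper invokes \cite[Theorem 1.1]{Sogge89}, which covers second-order elliptic operators with smooth lower-order coefficients and a potential in $\sL^{d/2}_{\loc}$, once one has checked that $\underline{\psi}\in\sH^2_{\loc}(\R^d)$. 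Relatedly, your claim that a bootstrap yields $\tilde\psi\in\mathcal{C}^\infty(\R^d)$ is an overstatement: the nonlinearity $z\mapsto|z|^{p-2}z$ is not smooth at the origin, so the bootstrap saturates at finite regularity; this is harmless, since only $\sH^2_{\loc}$-type regularity (obtained in the paper from $\underline{\psi}\in\sL^{2^*}$, $|\underline{\psi}|^{p-2}\in\sL^{d/2}_{\loc}$ and H\"older) is required. With the gauge-transformation argument replaced by a citation of a unique continuation result admitting smooth, locally bounded magnetic terms, your proof is complete and agrees with the paper's.
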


\begin{proof}
We recall that we always have $\lambda\leq\underline{\lambda}$. Indeed $\underline{\lambda}$ is a minimum and any associated minimizer has an exponential decay: it is sufficient to translate any minimizer to infinity and use a cutoff function.

We next claim that \(\lambda \ge \underline{\lambda}\). Indeed, if $\varphi$ is a test function for the problem in \(\R^d_+\), we extend $\varphi$ by zero and denote by $\underline{\varphi}\in\sH^1(\R^d)$ its extension. We use $\underline{\varphi}$ as test function for the quadratic form on $\R^d$ and get $\underline{\lambda}\leq\lambda$. 

Let us assume that $\lambda$ is attained for a function $\psi\in\sH^1_{0}(\Omega)$ with $\|\psi\|_{\sL^p(\Omega)}=1$. Let $\underline{\psi}\in\sH^1(\R^d)$ be the extension of $\psi$ by $0$. Therefore $\underline{\psi}$ is a minimizer associated with $\underline{\lambda}$ and it vanishes on a non-empty open set. It also satisfies the elliptic equation (the associated Euler-Lagrange equation):
\[(-i\nabla+\A)^2\underline{\psi}=\underline{\lambda}|\underline{\psi}|^{p-2}\underline{\psi}\,.\]
By Sobolev embedding, we have $\underline{\psi}\in\sL^{2^*}(\R^d)$, with $2^*=\frac{2d}{d-2}$. Let us consider any bounded open set $U\subset \R^d$. If $0<\frac{d}{2}(p-2)\leq 2^*$, we have, by the H\"older inequality, $|\underline{\psi}|^{p-2}\in\sL^{\frac{d}{2}}(U)$. But $\frac{d}{2}(p-2)\leq 2^*$ is equivalent to $p\leq 2+\frac{4}{d-2}=2^*$. Thus we have 
$|\underline{\psi}|^{p-2}\in\sL_{\loc}^{\frac{d}{2}}(\R^d)$. From this and the fact that $\underline{\psi}\in\sL^{2^*}(\R^d)$ we get, with the H\"older inequality, $\underline{\psi}\in\sL^2_{\loc}(\R^d)$. We infer that $\underline{\psi}\in\sH^2_{\loc}(\R^d)$. The assumptions of \cite[Theorem 1.1]{Sogge89} are satisfied (since $\frac{2d}{d+2}\leq 2$) so that the unique continuation property holds for $\underline{\psi}$. We deduce that $\underline{\psi}=0$ and this is a contradiction.
\end{proof}

\section{Bidimensional waveguides}\label{sec.7}
This section is devoted to the proof of Proposition~\ref{prop.waveguides}.

\subsection{Reduction to the straight waveguide}
Let us first pull back the variable geometry onto the homogeneous geometry. 

\begin{lemma}
There exist $h_{0}, C>0$ such that for all $h\in(0,h_{0})$ and for all $\psi\in\sH^1_{0}(\Sigma_{h})$, we have the comparison
\[(1-Ch)h^{-1-\frac{2}{p}} \frac{\mathfrak{Q}_{\Sigma, a, h}(\varphi)}{\|\varphi\|^2_{\sL^p(\Sigma)}}\leq\frac{\int_{\Sigma_{h}}|\nabla\psi|^2\dx \x}{\|\psi\|^2_{\sL^p(\Sigma_{h})}}\leq (1+Ch)h^{-1-\frac{2}{p}} \frac{\mathfrak{Q}_{\Sigma, a, h}(\varphi)}{\|\varphi\|^2_{\sL^p(\Sigma)}},\]
where $\varphi(s, t)=a(s)^{\frac{1}{p}}\psi(\Phi_{h}(s, t))$ and
\[\mathfrak{Q}_{\Sigma, a, h}(\varphi)=\int_{\Sigma} h^2a^{1-\frac{2}{p}}|\partial_{s}\varphi|^2+a^{-1-\frac{2}{p}}|\partial_{t}\varphi|^2\dx s\dx t\,.\]
\end{lemma}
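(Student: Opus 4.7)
The plan is to pull back the problem from $\Sigma_h$ to the reference strip $\Sigma=\R\times(-1,1)$ via the diffeomorphism $\Phi_h$ (well-defined for small $h$ by the non-overlap and bounded-curvature assumptions) and to compare, term by term, the Dirichlet energy density and the $\sL^p$-density with the ones encoded by $\mathfrak{Q}_{\Sigma,a,h}$.

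First, assuming $\Gamma$ is arc-length parametrized with signed curvature $\kappa$ (so that $\n'=-\kappa\Gamma'$), a direct computation from $\partial_s\Phi_h=(1-hta\kappa)\Gamma'+hta'\n$ and $\partial_t\Phi_h=ha\n$ yields the pull-back metric
\[
g_h=\begin{pmatrix}(1-hta\kappa)^2+h^2t^2(a')^2 & h^2 taa'\\ h^2 taa' & h^2a^2\end{pmatrix},\qquad \det g_h=h^2a^2(1-hta\kappa)^2,
\]
whose inverse satisfies, uniformly in $(s,t)\in\Sigma$,
\[
g^{11}\sqrt{\det g_h}=ha(1+\mathcal{O}(h)),\quad g^{22}\sqrt{\det g_h}=(ha)^{-1}(1+\mathcal{O}(h)),\quad g^{12}\sqrt{\det g_h}=\mathcal{O}(h).
\]

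Second, setting $\tilde\psi=\psi\circ\Phi_h=a^{-1/p}\varphi$, the $\sL^p$-norm transforms as
\[
\|\psi\|_{\sL^p(\Sigma_h)}^p=\int_\Sigma(1-hta\kappa)\,h\,|\varphi|^p\dx s\dx t=h\,(1+\mathcal{O}(h))\,\|\varphi\|_{\sL^p(\Sigma)}^p,
\]
and the Dirichlet energy, after a density reduction to $\varphi$ smooth and compactly supported in $s$, reads
\[
\int_{\Sigma_h}|\nabla\psi|^2\dx\x=\int_\Sigma\bigl[g^{11}|\partial_s\tilde\psi|^2+2g^{12}\Re(\partial_s\tilde\psi\,\overline{\partial_t\tilde\psi})+g^{22}|\partial_t\tilde\psi|^2\bigr]\sqrt{\det g_h}\dx s\dx t.
\]
Using $\partial_s\tilde\psi=a^{-1/p}\bigl(\partial_s\varphi-\tfrac{a'}{pa}\varphi\bigr)$ and $\partial_t\tilde\psi=a^{-1/p}\partial_t\varphi$, the principal contributions reduce to $ha^{1-2/p}|\partial_s\varphi|^2+h^{-1}a^{-1-2/p}|\partial_t\varphi|^2$, and factoring $h$ identifies the bracketed integral with $h^{-1}\mathfrak{Q}_{\Sigma,a,h}(\varphi)$.

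The main obstacle is to bound the remainders at relative size $\mathcal{O}(h)$. Three sources appear: (i) the $1+\mathcal{O}(h)$ multiplicative corrections in the metric, which are harmless; (ii) the off-diagonal $g^{12}$ term, of size $\mathcal{O}(h)|\partial_s\tilde\psi||\partial_t\tilde\psi|$, which is redistributed by a weighted Cauchy--Schwarz inequality with weight $h$ into $\mathcal{O}(h)$ relative corrections on both principal terms; and (iii) the $\Re(\partial_s\varphi\,\bar\varphi)$ and $|\varphi|^2$ pieces generated by the substitution $\tilde\psi=a^{-1/p}\varphi$. For (iii), the cross term equals $-\tfrac{ha'a^{-2/p}}{p}\partial_s|\varphi|^2$ and, after integration by parts in $s$, produces a bounded multiple of $h\|\varphi\|_{\sL^2(\Sigma)}^2$; the same holds for the direct $|\varphi|^2$ contribution. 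Since the Dirichlet condition on $\partial\Sigma=\R\times\{\pm 1\}$ yields the Poincaré inequality $\|\varphi\|_{\sL^2(\Sigma)}^2\le C\|\partial_t\varphi\|_{\sL^2(\Sigma)}^2$, all such remainders are of size $\mathcal{O}(h^2)$ relative to the main $\partial_t$-term. Combining, $\int_{\Sigma_h}|\nabla\psi|^2=(1+\mathcal{O}(h))h^{-1}\mathfrak{Q}_{\Sigma,a,h}(\varphi)$, and dividing by the $\sL^p$-identity gives the announced two-sided bound with multiplicative error $(1\pm Ch)$.
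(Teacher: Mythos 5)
Your proposal is correct and follows essentially the same route as the paper: pull back by $\Phi_{h}$, compute $G_{h}$ and its inverse up to relative $\mathcal{O}(h)$ errors, substitute $\Tilde\psi=a^{-1/p}\varphi$, integrate by parts in $s$, and absorb the resulting zeroth-order terms using a lower bound on the transverse energy (your Poincar\'e inequality plays the role of the paper's bound $\widetilde{\mathfrak{Q}}_{\Sigma,a,h}(\Tilde\psi)\geq c\|\varphi\|^2_{\sL^2(\Sigma)}$). Your treatment of the off-diagonal term by weighted Cauchy--Schwarz is just an explicit version of the paper's statement that $G_{h}^{-1}=(1+\mathcal{O}(h))\,\mathrm{diag}(1,h^{-2}a^{-2})$ in the sense of quadratic forms.
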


\begin{proof}
We notice that
\[d\Phi_{h}=[(1-tkh a)\Gamma'+th a'\n, h a \n]\,.\]
so that
\[G_{h}=(d\Phi_{h})^{\mathsf{T}}d\Phi_{h}=
\begin{pmatrix}
(1-tkh a)^2+h^2 a^2 t^2&th^2 a a'\\
th^2 a a'&h^2 a^2
\end{pmatrix}\,.\]
We get that $|G_{h}|=\det G_{h}=a^2h^2(1+\mathcal{O}(h))$. In the sense of quadratic forms, we have
\[G^{-1}_{h}=(1+\mathcal{O}(h))\begin{pmatrix}
1&0\\
0&h^{-2} a^{-2}
\end{pmatrix}\,.\]
Thanks to the change of variables $\x=\Phi_{h}(s, t)$, we deduce the following comparison:
\[(1-Ch)h^{-1}\widetilde{\mathfrak{Q}}_{\Sigma, a, h}(\Tilde\psi)\leq\int_{\Sigma_{h}}|\nabla\psi|^2\dx \x\leq (1+Ch)h^{-1}\widetilde{\mathfrak{Q}}_{\Sigma, a, h}(\Tilde\psi)\,,\]
where
\[\widetilde{\mathfrak{Q}}_{\Sigma, a, h}(\Tilde\psi)=\int_{\Sigma} ah^2|\partial_{s}\Tilde\psi|^2+a^{-1}|\partial_{t}\Tilde\psi|^2\dx s\dx t\,.\]
In the same way, we get
\[(1-Ch)h^{\frac{2}{p}}\left(\int_{\Sigma} |\Tilde\psi|^p a\dx s \dx t\right)^{\frac{2}{p}}\leq\left(\int_{\Sigma} |\psi|^p\dx\x\right)^{\frac{2}{p}}\leq(1+Ch)h^{\frac{2}{p}}\left(\int_{\Sigma} |\Tilde\psi|^p a\dx s \dx t\right)^{\frac{2}{p}}\,.\]
We introduce the change of function $\Tilde\psi= a^{-\frac{1}{p}}\varphi$ so that $\int_{\Sigma} |\Tilde\psi|^p a\dx s \dx t=\int_{\Sigma} |\varphi|^p \dx s \dx t$.
By a computation and an integration by parts, it follows that
\[\widetilde{\mathfrak{Q}}_{\Sigma, a, h}(\Tilde\psi)=\int_{\Sigma} h^2a^{1-\frac{2}{p}}|\partial_{s}\varphi|^2+h^2V(s)|\varphi|^2+a^{-1-\frac{2}{p}}|\partial_{t}\varphi|^2\dx s\dx t\,,\]
with 
\[V(s)=\frac{1}{p^2}a'^2a^{-\frac{2}{p}-1}+\frac{1}{p}\partial_{s}\left(a'a^{-\frac{2}{p}}\right)\,.\]
Note that there exists $c>0$ such that, for all $h>0$,
\[\widetilde{\mathfrak{Q}}_{\Sigma, a, h}(\Tilde\psi)\geq c\|\varphi\|^2_{\sL^2(\Sigma)}\,,\]
and that $V\in\sL^\infty(\R)$. We get
\[(1-Ch)\mathfrak{Q}_{\Sigma, a, h}(\varphi)\leq\widetilde{\mathfrak{Q}}_{\Sigma, h}(\Tilde\psi)\leq(1+Ch)\mathfrak{Q}_{\Sigma, h}(\varphi)\,.\qedhere
\]
\end{proof}
Therefore we are reduced to consider the minimization problem
\[\text{ minimize}\quad \frac{\mathfrak{Q}_{\Sigma, a, h}(\varphi)}{\|\varphi\|^2_{\sL^p(\Sigma)}}\,,\text{ for }  \varphi\in\sH_{0}^1(\Sigma)\,.\]
\subsection{Estimate of the normalized Sobolev quotient}
We are now in position to use the strategy developed in this paper on $\mathfrak{Q}_{\Sigma, a, h}$. Note here that we have a partially semiclassical problem. First, we have to establish an upper bound for the Sobolev quotient. For that purpose, we must freeze the height $a$ to the maximal height $a_{\max}$, attained at some point $s_{\max}$. We will need the following lemma.
\begin{lemma}\label{lem.Sigma1}
The Sobolev constant $\lambda^\Dir(\Sigma, p)$ is attained. Any corresponding minimizer has an exponential decay.
\end{lemma}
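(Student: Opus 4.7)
\textbf{Proof plan for Lemma~\ref{lem.Sigma1}.}

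The strategy is to mimic the concentration-compactness proof of Theorem~\ref{theo.0}~\eqref{theo0ii}, which is actually simpler here because the cross-section $(-1,1)$ is bounded and carries a Dirichlet condition. First I would invoke the transverse Poincar\'e inequality $\|\psi\|_{\sL^2(\Sigma)}^2\leq C\|\partial_t\psi\|_{\sL^2(\Sigma)}^2$, valid for every $\psi\in\sH^1_0(\Sigma)$ since the slices $\{s\}\times(-1,1)$ are bounded with Dirichlet data. This immediately yields $\lambda^\Dir(\Sigma,p)>0$ and the uniform boundedness in $\sH^1_0(\Sigma)$ of any $\sL^p$-normalized minimizing sequence $(\psi_j)_{j\geq 1}$.

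Next I would apply a one-dimensional analogue of Proposition~\ref{prop.exclude-van}: paving $\Sigma$ by cells $\Omega_{k,R}=[kR,(k+1)R]\times(-1,1)$ with $k\in\Z$, the Sobolev embedding on each cell yields
\[
  \|\psi_j\|_{\sL^p(\Sigma)}\leq S^{2/p}\,\|\psi_j\|_{\sH^1(\Sigma)}^{2/p}\,M_R(\psi_j)^{1-2/p},
\]
where $M_R(\psi_j)=\sup_k\|\psi_j\|_{\sL^p(\Omega_{k,R})}$. The vanishing alternative $M_R(\psi_j)\to 0$ for all $R$ would force $\|\psi_j\|_{\sL^p(\Sigma)}\to 0$, contradicting the normalization. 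Hence, after extracting and translating each $\psi_j$ in the $s$-direction so that the mass sits in $\Omega_{0,R}$, I may assume that $\psi_j\rightharpoonup\varphi\neq 0$ weakly in $\sH^1_0(\Sigma)$ and pointwise a.e.

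To rule out dichotomy I would reuse the splitting argument from Subsection~\ref{sec.bnd-van}: with $\delta_j=\psi_j-\varphi$, the quadratic form satisfies $\|\nabla\psi_j\|_{\sL^2}^2=\|\nabla\delta_j\|_{\sL^2}^2+\|\nabla\varphi\|_{\sL^2}^2+o(1)$, while the Brezis--Lieb-type equi-integrability argument gives $\|\psi_j\|_{\sL^p}^p=\|\delta_j\|_{\sL^p}^p+\|\varphi\|_{\sL^p}^p+o(1)$. The strict concavity of $\alpha\mapsto\alpha^{2/p}$ for $p>2$ then forces $\alpha:=\|\varphi\|_{\sL^p(\Sigma)}^p=1$, so that $\varphi$ is a minimizer of $\lambda^\Dir(\Sigma,p)$. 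This is the step I expect to require the most care, but since the argument has already been carried out on the half-space in Subsection~\ref{sec.bnd-van}, the transfer to the strip is essentially automatic.

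Finally, the exponential decay of $\varphi$ follows by the Agmon--Persson argument of Proposition~\ref{prop.exp}: the nonlinear potential $V_\mathsf{NL}=-\lambda^\Dir(\Sigma,p)|\varphi|^{p-2}$ lies in $\sL^{p/(p-2)}(\Sigma)$ since $\varphi\in\sL^p(\Sigma)$, and is therefore a form-small perturbation of the Dirichlet Laplacian on $\Sigma$. Because $V_\mathsf{NL}$ vanishes at infinity in the $\sL^{p/(p-2)}$-sense, Persson's theorem yields $\inf\spe(\mathfrak{L}_{\mathsf{G},1,\mathsf{NL}})\geq\lambda^\Dir(\Sigma,2)>0$ (the transverse Poincar\'e constant). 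The standard Agmon--Persson estimates then provide an $\alpha>0$ such that $e^{\alpha|s|}\varphi\in\sL^2(\Sigma)$, yielding the claimed exponential decay.
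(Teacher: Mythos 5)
Your proposal is correct and follows exactly the route the paper itself indicates (and only sketches): concentration--compactness along the $s$-axis with compactness modulo $s$-translations for existence, and the Agmon--Persson argument of Proposition~\ref{prop.exp} for the exponential decay. The only simplification worth noting is the one you already observed: since the cells $\Omega_{k,R}$ cover all of $\Sigma$, vanishing directly contradicts the $\sL^p$-normalization, so no analogue of the strict inequality \eqref{Int-Bord} is needed.
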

\begin{proof}
Once the Sobolev is attained, it is rather clear that the minimizers have an exponential decay (see the proof of Proposition~\ref{prop.exp}). The fact that the infimum is attained is a consequence of a concentration-compactness investigation along the $s$-axis: we are in the compactness case modulo translations parallel to the $s$-axis.
\end{proof}

\begin{lemma}\label{lem.ub-wg}
There exist $h_{0}, C>0$ such that, for all $h\in(0, h_{0})$,
\[\inf_{\underset{\psi\neq 0}{\varphi\in \sH^1(\Sigma),}}\frac{\mathfrak{Q}_{\Sigma, a, h}(\varphi)}{\|\varphi\|^2_{\sL^p(\Sigma)}}\leq (1+Ch^2)h^{1-\frac{2}{p}} a^{-\frac{4}{p}}_{\max}\lambda^\Dir(\Sigma, p)\,.\]
\end{lemma}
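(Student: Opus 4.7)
The plan is to produce an explicit test function by rescaling a minimizer of $\lambda^\Dir(\Sigma,p)$ around a maximum of $a$, with an anisotropic scaling chosen so as to balance the two coefficients of $\mathfrak{Q}_{\Sigma,a,h}$.

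Let $\varphi_0\in\sH^1_0(\Sigma)$ be an $\sL^p$-normalized minimizer of $\lambda^\Dir(\Sigma,p)$ provided by Lemma~\ref{lem.Sigma1}; it satisfies $\int_\Sigma |\nabla\varphi_0|^2\dx s\dx t = \lambda^\Dir(\Sigma,p)$ and decays exponentially, the exponential decay being transferred to $\nabla\varphi_0$ via the nonlinear Euler--Lagrange equation and elliptic regularity. Let $s_{\max}\in\R$ be a point where $a$ attains its maximum, which exists by hypothesis on $a$. I would choose the rescaling parameter $\sigma = (h a_{\max})^{-1}$, designed precisely so that $h^2 \sigma^2 a_{\max}^{1-\frac{2}{p}} = a_{\max}^{-1-\frac{2}{p}}$, and set
\[
  \varphi(s,t) := \varphi_0\bigl(\sigma(s-s_{\max}),\, t\bigr)\in \sH^1_0(\Sigma).
\]

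The next step is to compute the Sobolev quotient after the change of variables $u=\sigma(s-s_{\max})$. The $\sL^p$ norm yields directly $\|\varphi\|_{\sL^p(\Sigma)}^p = \sigma^{-1}\|\varphi_0\|_{\sL^p(\Sigma)}^p = h a_{\max}$, while the quadratic form becomes
\[
  \mathfrak{Q}_{\Sigma,a,h}(\varphi) = \sigma^{-1}\int_\Sigma \Bigl(h^2\sigma^2 a(s_{\max}+u/\sigma)^{1-\frac{2}{p}}|\partial_1\varphi_0|^2 + a(s_{\max}+u/\sigma)^{-1-\frac{2}{p}}|\partial_2\varphi_0|^2\Bigr)\dx u\,\dx t.
\]
By the choice of $\sigma$, both coefficients equal $a_{\max}^{-1-\frac{2}{p}}$ at $u=0$, so to leading order the integrand reconstructs $|\partial_1\varphi_0|^2+|\partial_2\varphi_0|^2$ integrated against $a_{\max}^{-1-\frac{2}{p}}$, yielding the main term $h a_{\max}^{-\frac{2}{p}}\lambda^\Dir(\Sigma,p)$.

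The last step is to control the error from freezing $a$ at $a_{\max}$. Since $a$ is smooth, $a'(s_{\max})=0$ and $a$ is globally bounded (with $a\ge a_0>0$), one has $|a(s_{\max}+u/\sigma)^{1\mp 2/p} - a_{\max}^{1\mp 2/p}|\leq C(u/\sigma)^2 = C h^2 a_{\max}^2 u^2$ uniformly in $u\in\R$ (the local quadratic estimate being extended globally by absorbing the bounded factor into the quadratic weight when $|u/\sigma|\ge1$). The exponential decay of $\varphi_0$ and $\nabla\varphi_0$ delivered by Lemma~\ref{lem.Sigma1} makes $\int u^2|\nabla\varphi_0|^2\dx u\,\dx t$ finite, so the error in $\mathfrak{Q}_{\Sigma,a,h}(\varphi)$ is $O(h^3)$. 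Dividing by $\|\varphi\|_{\sL^p(\Sigma)}^2 = (h a_{\max})^{2/p}$ then produces the claimed bound $(1+Ch^2)h^{1-\frac{2}{p}}a_{\max}^{-\frac{4}{p}}\lambda^\Dir(\Sigma,p)$. The only genuinely delicate point in the plan is identifying the correct anisotropic scaling $\sigma=(ha_{\max})^{-1}$ that simultaneously balances the two coefficients of $\mathfrak{Q}_{\Sigma,a,h}$ and keeps the Taylor remainder of $a$ at order $h^2$ in the quotient; once this scaling is in hand, the estimate is essentially bookkeeping.
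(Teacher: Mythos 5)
Your proposal is correct and follows essentially the same route as the paper: the same test function $\varphi_0(a_{\max}^{-1}(s-s_{\max})/h,\,t)$ obtained by rescaling a minimizer of $\lambda^\Dir(\Sigma,p)$ only in the longitudinal variable, the same $\sL^p$-norm computation, and the same second-order Taylor expansion of $a$ at its maximum combined with the exponential decay from Lemma~\ref{lem.Sigma1} to get the $O(h^2)$ relative error. Your explicit remark that the decay must also be transferred to $\nabla\varphi_0$ (via the Euler--Lagrange equation, or equivalently via the weighted quadratic-form bound of Proposition~\ref{prop.exp}) is a point the paper leaves implicit, but otherwise the two arguments coincide.
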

\begin{proof}
Let us consider an   $\sL^p$-normalized minimizer $\phi_{0}$ associated with the $p$-eigenvalue $\lambda(\Sigma, \mathsf{Id}, 0, 0, +\infty, 1, p)$ and introduce
\[\varphi_{h}(s,t)=\phi_{0}\left(a^{-1}_{\max}\frac{s-s_{\max}}{h},t\right)\,.\]
We compute
\[\mathfrak{Q}_{\Sigma, a, h}(\varphi_{h})=h a_{\max}\int_{\Sigma} \left\{a^{-2}_{\max}a_{h}^{1-\frac{2}{p}}|\partial_{s}\phi_{0}\left(\sigma,t\right)|^2+a_{h}^{-1-\frac{2}{p}}|\partial_{t}\phi_{0}\left(\sigma,t\right)|^2\right\} \dx \sigma\dx t\,,\]
with $a_{h}(\sigma, t)=a\left(s_{\max}+h a_{\max} \sigma,t\right)$. Thanks to a Taylor expansion and to the exponential decay of $\phi_{0}$, we get
\[\mathfrak{Q}_{\Sigma, a, h}(\varphi_{h})\leq (1+Ch^2)h a_{\max}^{-\frac{2}{p}}\lambda(\Sigma, \mathsf{Id}, 0, 0, +\infty, 1, p)\,.\]
We also get
\[\left(\int_{\Sigma} \left|\varphi_{h}\right|^p\dx \sigma\dx t\right)^{\frac{2}{p}}=h^{\frac{2}{p}} a^{\frac{2}{p}}_{\max} \,.\]
The conclusion follows.
\end{proof}
Let us now deal with the lower bound.
\begin{lemma}\label{lem.lb-wg}
There exist $h_{0}, C>0$ such that, for all $h\in(0, h_{0})$ and all $\varphi\in\sH_{0}^1(\Sigma),$
\[\mathfrak{Q}_{\Sigma, a, h}(\varphi)\geq  (1-Ch^{\frac{1}{2}})h^{1-\frac{2}{p}} a^{-\frac{4}{p}}_{\max}\lambda^\Dir(\Sigma, p)\|\varphi\|^2_{\sL^p(\Sigma)}\,.\]
\end{lemma}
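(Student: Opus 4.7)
The plan is to adapt the sliding partition strategy of Section~\ref{sec.4}, but only in the longitudinal variable $s$, since the problem is partially semiclassical (no small parameter appears in front of $|\partial_t\varphi|^2$). First I will construct a one-dimensional analogue of Lemma~\ref{lem.translation}: a family of cutoffs $(\chi_\kb^\tau)_{\kb\in\Z}$ in $s$ alone, supported on intervals of width $\sim h^\rho+h^\alpha$ with overlap $h^\alpha$, $\alpha\geq\rho>0$, and a translation $\tau$ chosen so that $\sum_\kb\|\chi_\kb^\tau\varphi\|^p_{\sL^p(\Sigma)}\geq(1+Ch^{\alpha-\rho})^{-1}\|\varphi\|^p_{\sL^p(\Sigma)}$ and the localization identity
\[
\mathfrak{Q}_{\Sigma,a,h}(\varphi)\geq \sum_{\kb}\mathfrak{Q}_{\Sigma,a,h}(\chi_\kb^\tau\varphi)-Ch^{2-\rho-\alpha}\|\varphi\|^2_{\sL^2(\Sigma)}
\]
both hold (with the $\sL^p$ bound using $p\geq2$ as in Lemma~\ref{lem.translation}).

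On each piece I will freeze $a$ to its central value $a_\kb:=a(s_\kb)$, at a cost of a multiplicative error $(1-Ch^\rho)$ coming from the smoothness of $a$. Then I will perform the anisotropic scaling $s=ha_\kb\sigma$ (with $t$ unchanged). A direct computation gives
\[
\int\bigl(h^2a_\kb^{1-\frac2p}|\partial_s\Phi|^2+a_\kb^{-1-\frac2p}|\partial_t\Phi|^2\bigr)\dx s\dx t = h\,a_\kb^{-\frac2p}\int|\nabla\Psi|^2\dx\sigma\dx t,
\]
while $\|\Phi\|^2_{\sL^p}=(ha_\kb)^{\frac2p}\|\Psi\|^2_{\sL^p}$, so that the rescaled ratio is exactly $h^{1-2/p}a_\kb^{-4/p}$ times the Dirichlet Sobolev ratio on the straight waveguide $\Sigma$. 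Using the definition of $\lambda^\Dir(\Sigma,p)$ and the trivial bound $a_\kb^{-4/p}\geq a_{\max}^{-4/p}$ then yields
\[
\mathfrak{Q}_{\Sigma,a,h}(\chi_\kb^\tau\varphi)\geq (1-Ch^\rho)h^{1-\frac2p}a_{\max}^{-\frac4p}\lambda^\Dir(\Sigma,p)\,\|\chi_\kb^\tau\varphi\|^2_{\sL^p(\Sigma)}.
\]

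Summing over $\kb$ and exploiting the elementary inequality $(\sum_\kb x_\kb)^{2/p}\leq\sum_\kb x_\kb^{2/p}$ (valid since $2/p\leq1$), together with the $\sL^p$ part of the sliding lemma, I will recover $\|\varphi\|^2_{\sL^p(\Sigma)}$ on the right-hand side, up to a factor $(1-Ch^{\alpha-\rho})$. The remaining ingredient is to absorb the $\sL^2$-remainder from the localization formula: since $\varphi$ is Dirichlet in $t\in(-1,1)$ and $a$ is bounded, the Poincaré inequality in the transverse variable gives
\[
\|\varphi\|^2_{\sL^2(\Sigma)}\leq C\int_\Sigma a^{-1-\frac2p}|\partial_t\varphi|^2\dx s\dx t\leq C\,\mathfrak{Q}_{\Sigma,a,h}(\varphi),
\]
with $C$ independent of $h$; this lets me move the $h^{2-\rho-\alpha}\|\varphi\|^2_{\sL^2}$ error to the left-hand side, replacing it by a factor $(1-Ch^{2-\rho-\alpha})\mathfrak{Q}_{\Sigma,a,h}(\varphi)$. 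Optimising the three error scales by the system $\rho=\alpha-\rho=2-\rho-\alpha$ gives $\rho=\tfrac12$, $\alpha=1$, hence a total remainder of order $h^{1/2}$, which is exactly the bound announced.

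The main obstacle is the proper bookkeeping of the partition: the error $h^{2-\rho-\alpha}\|\varphi\|^2_{\sL^2}$ produced by the transversally uniform cutoffs has no small factor in front of the $\partial_t$ term (we are not fully semiclassical), so the gain in the localization formula comes only from the $s$-derivatives of $\chi_\kb^\tau$. This forces one to lean on Poincaré in $t$ (rather than on the min-max as in Section~\ref{sec.4}) to control $\|\varphi\|^2_{\sL^2}$ by the quadratic form. Once that observation is in place, the rest of the argument is a direct transcription of the sliding-partition plus homogeneous-model scheme already developed for $\mathfrak{Q}_{\mathcal{G},h}$.
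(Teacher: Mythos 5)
Your proposal is correct and follows essentially the same route as the paper: a one-dimensional sliding partition in $s$, freezing $a$ at the centers with a $\mathcal{O}(h^{\rho})$ Taylor error, rescaling to the homogeneous Dirichlet model on $\Sigma$, bounding $a(s_{\kb})^{-4/p}\geq a_{\max}^{-4/p}$, and optimizing $\rho=\tfrac12$, $\alpha=1$. Your explicit use of the transverse Poincar\'e inequality to absorb the $h^{2-\rho-\alpha}\|\varphi\|_{\sL^2}^2$ remainder is exactly the (implicit) mechanism the paper relies on, so the two arguments coincide.
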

\begin{proof}
Let us use a \enquote{sliding} partition of the unity as in Section~\ref{sec.sliding} but only with respect to $s$ (\emph{i.e.} $d=1$). We recall \eqref{eq.part-quad} and \eqref{partition-remainder}. By using the partition adapted to $\varphi$, we have
\begin{equation*}
\mathfrak{Q}_{\Sigma, a, h}(\varphi)\geq\sum_{\kb\in\Z}\mathfrak{Q}_{\Sigma, a, h}(\Tilde\chi_{\alpha,\rho,h}^{[\kb]}\varphi) -\Tilde Dh^{2-\alpha-\rho}\|\varphi\|_{\sL^2(\Sigma)}^2\,,
\end{equation*}
so that
\begin{equation*}
\mathfrak{Q}_{\Sigma, a, h}(\varphi)\geq (1-Ch^{2-\alpha-\rho})\sum_{\kb\in\Z}\mathfrak{Q}_{\Sigma, a, h}(\Tilde\chi_{\alpha,\rho,h}^{[\kb]}\varphi)\,.
\end{equation*}
Then, by a support consideration and a Taylor expansion, we get
\begin{equation*}
\mathfrak{Q}_{\Sigma, a, h}(\varphi)\geq (1-Ch^{2-\alpha-\rho})(1-Ch^{\rho})\sum_{\kb\in\Z}\mathfrak{Q}_{\Sigma, a(s_{\kb}), h}(\Tilde\chi_{\alpha,\rho,h}^{[\kb]}\varphi)\,,
\end{equation*}
so that, by rescaling and a straightforward comparison,
\begin{equation*}
\mathfrak{Q}_{\Sigma, a, h}(\varphi)\geq \lambda^\Dir(\Sigma, p)h^{1-\frac{2}{p}}(1-Ch^{2-\alpha-\rho})(1-Ch^{\rho})\sum_{\kb\in\Z}a(s_{\kb})^{-\frac{4}{p}}\|\Tilde\chi_{\alpha,\rho,h}^{[\kb]}\varphi\|^2_{\sL^p(\Sigma)}\,.
\end{equation*}
Since $a(s_{\kb})^{-\frac{4}{p}}\geq a_{\max}^{-\frac{4}{p}}$ and by using that the partition is adapted to $\varphi$, we get
\begin{equation*}
\mathfrak{Q}_{\Sigma, a, h}(\varphi)\geq  a_{\max}^{-\frac{4}{p}}\lambda^\Dir(\Sigma, p)h^{1-\frac{2}{p}}(1-Ch^{2-\alpha-\rho})(1-Ch^{\rho})(1-Ch^{\alpha-\rho})\|\varphi\|^2_{\sL^p(\Sigma)}\,.
\end{equation*}
Optimizing the remainders, we find $\rho=\frac{1}{2}$ and $\alpha=1$.
\end{proof}
We leave the proof of the corresponding localization estimates to the reader since they follow from straightforward adaptations of the methods developed in this paper.

\section{Some perspectives}\label{sec.pers}
In this last section, we discuss some perspectives and open problems. There are many possible directions to extend our investigation and we only select a few of them in the next lines.

Firstly, it would be quite interesting to analyze the case of domains with corners. In the semiclassical regime, the strategy developed in \cite{BDP15} (for the case $p=2$ and Neumann condition; see also \cite{BP15} where the same strategy is used in the non-magnetic Robin case) could likely apply to get semiclassical upper bounds (as in Theorem~\ref{theo.1}). Nevertheless several modifications should be made (in particular about the considerations involving a separation of variables or the Fourier transform). The semi-continuity (see Proposition~\ref{prop.semicont}) in the Robin case and/or in dimension higher than three does not seem to be obvious. For $p=2$, it is only known for the Neumann case with pure magnetic field in two and three dimensions (see \cite{BDP15}). For $p>2$, one should perform a concentration-compactness investigation along the singular chains. For the lower bound, the adaptations should be easier (with a change of the localization scale near the conical singularities). Even in the case without magnetic field, it would be quite interesting to analyze the $p$-eigenvalue $\lambda(\mathsf{G}, 1, p)$ when $\mathsf{G}=(U,\Id,  1,0,\mathsf{c})$ and where $U$ is a dihedral. It seems that the question to know if $\lambda(\mathsf{G}, 1, p)$ is attained is open (and the answer should strongly depend on $\mathsf{c}$ as we guess from Proposition~\ref{prop:Robin}).

Secondly, the waveguide situation could be extended to general partially semiclassical problems. For instance, one could first consider a partially semiclassical and pure electric interaction in $\R^d$. Many inhomogeneous situations lead to this kind of limit (especially in the case with magnetic field as we see in \cite{BHR15}). In the waveguides framework, the description of curvature effects on the asymptotics of $p$-eigenvalues seems to be an open area (for $p=2$, it is known to play a role in the lower order terms). In the case of waveguides of \emph{uniform} width, we do not even know if the energy of the nonlinear groundstate is strictly less than the nonlinear energy at infinity. These curvature effects on the $p$-eigenvalues in magnetic/Robin situations would be interesting as well, especially if we imagine that the non-linearity ($p>2$) amplifies the localization properties of the linear groundstates.

\subsection*{Acknowledgments}
This work was partially supported by the Henri Lebesgue Center (programme  \enquote{Investissements d'avenir}  -- n\textsuperscript{o} ANR-11-LABX-0020-01)
and by the Projet de Recherche FNRS T.1110.14 \enquote{Asymptotic properties of semilinear systems}.
S.F. was partially supported by a Sapere Aude grant from the Danish Councils for Independent Research, Grant number DFF--4181-00221.


\end{document}